\definecolor{hage}{rgb}{0.4,0.6,1}
\colorlet{inbox}{lightgray!20}
\colorlet{outbox}{lightgray!50}
\renewcommand*\env@matrix[1][*\c@MaxMatrixCols c]{%
  \hskip -\arraycolsep
  \let\@ifnextchar\new@ifnextchar
  \array{#1}}
\newcommand{\cB}{\mathcal{B}}
\newcommand{\cA}{\mathcal{A}}
\newcommand{\cD}{\mathcal{D}}
\newcommand{\cF}{\mathcal{F}}
\newtheorem{remark}[theorem]{Remark}
\newcommand{\RN}[1]{\uppercase\expandafter{\romannumeral#1}}
\newcommand{\eps}{\varepsilon}
\newcommand{\N}{{\mathbb{N}}}
\newcommand{\R}{{\mathbb{R}}}
\newcommand{\fB}{\mathfrak{B}}
\newcommand{\fa}{\mathfrak{a}}
\newcommand{\fH}{\mathfrak{H}}
\newcommand{\fV}{\mathfrak{V}}
\newcommand{\fA}{\mathfrak{A}}
\DeclareMathOperator{\esssup}{ess\, sup}
\let\div\relax 
\DeclareMathOperator{\div}{div} 
\newcommand{\setdef}[2]{\left\{\, #1 \left|\, \vphantom{#1} #2\right.\right\}}
\newcommand{\ddt}{\tfrac{\text{\normalfont d}}{\text{\normalfont d}t}}
\newcommand{\diag}{\text{\rm diag\,}}
\newcommand{\ds}[1]{{\rm \, d} #1 \,}
\DeclareMathOperator{\loc}{loc}
\title{Funnel control of the Fokker-Planck equation for a multi-dimensional Ornstein-Uhlenbeck process\thanks{Preprint submitted to SIAM Journal on Control and Optimization, \today\funding{This work was supported by the German Research Foundation (Deutsche Forschungsgemeinschaft) via the grant BE\,6263/1-1.}}}
\author{Thomas Berger\thanks{Institut f\"ur Mathematik, Universit\"at Paderborn, Warburger Str.~100, 33098~Paderborn, Germany
  (\email{thomas.berger@math.upb.de}).}}
\begin{document}

\maketitle

\begin{abstract}
In this paper the feasibility of funnel control techniques for the Fokker-Planck equation corresponding to a multi-dimensional Ornstein-Uhlenbeck process on an unbounded spatial domain is explored. First, using weighted Lebesgue and Sobolev spaces, an auxiliary operator is defined via a suitable sesquilinear form. This operator is then transformed to the desired Fokker-Planck operator. We show that any mild solution of the controlled Fokker-Planck equation (which is a probability density) has a covariance matrix that exponentially converges to a constant matrix. After a simple feedforward control approach is discussed, we show feasibility of funnel control in the presence of disturbances by exploiting semigroup theory. We emphasize that the closed-loop system is a nonlinear and time-varying PDE. The results are illustrated by some simulations.
\end{abstract}

\begin{keywords}
  Adaptive control, Fokker-Planck equation, Ornstein-Uhlenbeck process, funnel control, bilinear control systems, robust control
\end{keywords}

\begin{AMS}
  35K55, 93C40
\end{AMS}

\section{Introduction}

In this work we study output tracking control for the Fokker-Planck equation that corresponds to a multi-dimensional Ornstein-Uhlenbeck process. The latter is a continuous-time stochastic process which was originally used to describe the motion of a massive Brownian particle under the influence of friction~\cite{UhleOrns30}. Although its investigation was mainly driven by physics and mathematics, several other important applications emerged, such as in neurobiology~\cite{RiccSace79} and in finance~\cite{SchoZhu99}. The Ornstein-Uhlenbeck process (typically in the one-dimensional case) is often considered in the context of optimal control, see e.g.~\cite{AnnuBorz10,AnnuBorz13,FleiGrun16,FleiGugl16}. The Fokker-Planck equation is a parabolic partial differential equation (PDE) which describes the evolution of the probability density function of the solution of a stochastic differential equation, see e.g.~\cite{Okse03}. It will be the main tool to treat the output tracking control problem.

In this context, control means that we assume that the drift term of the stochastic differential equation can be manipulated by an external signal, which is called the control input and enters the equation via a nonlinear function~$g$ satisfying a so-called high-gain property. The resulting Fokker-Planck equation can be viewed as an abstract bilinear control system in terms of the state and the (nonlinear) control function, cf.~\cite{BreiKuni18,HosfJaco20}; see also the monograph~\cite{Khap10} for several topics on bilinear control systems. The mean value (or expected value) of the Ornstein-Uhlenbeck process is chosen as the output~$y$ and measurements of it are assumed to be available. For a given reference signal~$y_{\rm ref}$, we then seek to achieve that the (norm of the) difference between the mean value and the reference stays within a prescribed error margin (given by a function~$\varphi$) for all times, thus allowing to control the mean value of the process as desired. Under funnel control the closed-loop system is a nonlinear and time-varying PDE of the form
\begin{equation}\label{eq:funnel-CL}
\begin{aligned}
 \dot p(t,x) &= \div \left( c \nabla p(t,x) + p(t,x) \Big(\Gamma x - g\left( \big(N\circ \alpha\big)\big(\|w(t)\|_{\R^n}^2\big) w(t)\right)\Big)\right) + d(t,x),\\
      w(t) &= \varphi(t) \big(y(t) - y_{\rm ref}(t)\big),\quad   y(t) = \int_{\R^n} x p(t,x)\, {\rm d}x,
\end{aligned}
\end{equation}
with the initial condition $p(0,x) = p_0(x)$, for which we prove existence and uniqueness of bounded global solutions. In the above equation, {$c>0$ and $\Gamma\in\R^{n\times n}$ are diffusion and drift coefficients, resp.,} $d$ is a bounded disturbance and the funnel control input is given by
\[
    u(t) = \big(N\circ \alpha\big)\big(\|w(t)\|_{\R^n}^2\big) w(t),
\]
where $\alpha:[0,1)\to [1,\infty)$ is a bijection and $N$ is a switching function.

We like to stress that we do not require knowledge of the system parameters or the initial probability density. {This is different from other approaches as e.g.~\cite{ChenGeor16}, where the probability density is steered to a desired density function, but the initial density must be known.}

Furthermore, {by controlling the mean value of the process we may indeed influence the behavior of the} entire probability density function. Since only the drift term in the Fokker-Planck equation is influenced by the control input, the covariance matrix of the process is independent of it. We will show that it converges exponentially to $c\Gamma^{-1}$. Indeed, simulations show that the shape of the probability density does not change after some initial time, and is essentially only shifted according to the movement of the mean value.

The control law for the input~$u$ is based on the funnel control methodology developed in~\cite{IlchRyan02b}. The funnel controller is an output-error feedback of high-gain type. Its advantages are that it is model-free (i.e., it requires no knowledge of the system parameters or the initial value), it is robust and of striking simplicity -- for the Fokker-Planck equation we will show that robustness can be guaranteed  w.r.t.\ additive disturbances ``with zero mass''. The funnel controller has been successfully applied e.g.\ in temperature control of chemical reactor models~\cite{IlchTren04}, control of industrial servo-systems~\cite{Hack17} and underactuated multibody systems~\cite{BergOtto19}, voltage and current control of electrical circuits~\cite{BergReis14a}, DC-link power flow control~\cite{SenfPaug14} and adaptive cruise control~\cite{BergRaue18,BergRaue20}.

Funnel control for infinite-dimensional systems is a hard task in general. A simple class of systems with relative degree one and infinite-dimensional internal dynamics has been considered in the seminal work~\cite{IlchRyan02b}. Linear infinite-dimensional systems for which an integer-valued relative degree exists have been considered in~\cite{IlchSeli16}. In fact, it has been observed in the recent work~\cite{BergPuch20a} that the existence of an integer-valued relative degree is essential to apply known funnel control results as formulated e.g.\ in~\cite{BergLe18a}. It is then shown in~\cite{BergPuch20a} that a large class of systems which exhibit infinite-dimensional internal dynamics is susceptible to funnel control. A practically relevant example is a mowing water tank system, which is shown to belong to the aforementioned class in~\cite{BergPuch19}. However, not even every linear infinite-dimensional system has a well-defined relative degree, in which case the results from~\cite{BergLe18a} cannot be applied. For this class of systems -- to which the Fokker-Planck equation belongs -- the feasibility of funnel control has to be investigated directly for the (nonlinear and time-varying) closed-loop system; see e.g.~\cite{ReisSeli15b} for a boundary controlled heat equation,~\cite{PuchReis19pp} for a general class of boundary control systems and~\cite{BergBrei21} for a system of monodomain equations (which represent defibrillation processes of the human heart).

\subsection{Nomenclature}

The set of natural numbers is denoted by~$\N$ and $\N_0 = \N \cup\{0\}$. 
For a measurable set $\Omega\subseteq\R^n$, $n\in\N$, a measurable function $w:\Omega\to\R_{\ge 0}$ and $p\in[1,\infty]$, $L^p(\Omega;w)$ denotes the $w$-weighted Lebesgue space of (equivalence classes of) measurable and $p$-integrable functions $f:\Omega\to\R$ with norm
\[
    \|f\|_{L^p(\Omega;w)} = \left( \int_\Omega w(x)\, |f(x)|^p {\rm d}x \right)^{1/p},\quad f\in L^p(\Omega;w),
\]
if $p<\infty$ and $\|f\|_{L^\infty(\Omega;w)} = \esssup_{x\in\Omega} w(x)\, |f(x)|$ if $p=\infty$. Additionally, for $k\in\N_0$, $W^{k,p}(\Omega;w)$ denotes the $w$-weighted Sobolev space of (equivalence classes of) $k$-times weakly differentiable functions $f:\Omega\to\R$ with $f, f', \ldots, f^{(k)} \in L^p(\Omega;w)$. If $w\equiv 1$, then we write $L^p(\Omega;1) = L^p(\Omega)$ and $W^{k,p}(\Omega;1) = W^{k,p}(\Omega)$. 
%
The space $L^2(\Omega;w)^n$ is equipped with the inner product
\[
    \langle f_1, f_2\rangle_{L^2(\Omega;w)^n} = \sum_{k=1}^n \langle f_{1,k}, f_{2,k}\rangle_{L^2(\Omega;w)}.
\]
For an interval $J\subseteq\R$, a Banach space $X$ and $p\in[1,\infty]$, we denote by $L^p(J;X)$ the vector space of equivalence classes of strongly measurable functions $f:J\to X$ such that $\|f(\cdot)\|_X\in L^p(J)$; the distinction between $L^p(J;X)$ and $L^p(\Omega;w)$ should be clear from the context.
If $J=(a,b)$ for $a,b\in\R$, we simply write $L^p(a,b;X)$, also for the case $a=-\infty$ or $b=\infty$. We refer to~\cite{Adam75} for further details on Sobolev and Lebesgue spaces.

By $C^k(J;X)$ we denote the space of $k$-times continuously differentiable functions $f:J\to X$, $k\in\N_0$, with $C(J;X) := C^0(J;X)$.
For $p\in[1,\infty]$, $W^{1,p}(J;X)$ stands for the Sobolev space of $X$-valued equivalance classes of weakly differentiable and $p$-integrable functions $f:J\to X$ with $p$-integrable weak derivative, i.e., $f,\dot{f}\in L^p(J;X)$. Thereby, integration (and thus weak differentiation) has to be understood in the Bochner sense, see~\cite[Sec.~5.9.2]{Evan10}.
The spaces $L^p_{\rm loc}(J;X)$ and $W^{1,p}_{\rm loc}(J;X)$ consist of all~$f$ whose restriction to any compact interval $K\subseteq J$ are in $L^p(K;X)$ or $W^{1,p}(K;X)$, respectively.


By $\cB(X;Y)$, where $X, Y$ are Hilbert spaces, we denote the set of all bounded linear operators $A:X\to Y$. Recall that a {\it $C_0$-semigroup}  $(T(t))_{t\ge0}$ on $X$ is a $\mathcal{L}(X;X)$-valued map satisfying $T(0)=I_{X}$ and $T(t+s)=T(t)T(s)$, $s,t\geq0$, where $I_{X}$ denotes the identity operator, and $t\mapsto T(t)x$ is continuous for every $x\in X$. $C_0$-semigroups are characterized by their generator~$A$, which is a, not necessarily bounded, operator on~$X$. If $\|T(t)\|_{\cB(X;X)} \le 1$ for all $t\ge 0$, then $(T(t))_{t\ge0}$ is called a \textit{contraction semigroup}. For the notion of an \textit{analytic semigroup} (sometimes called holomorphic semigroup) we refer to~\cite[Sec.~3.10]{Staf05}.

Furthermore, recall the space $X_{-1}$, see e.g.~\cite[Sec.~2.10]{TucsWeis09}, which should be thought of as an abstract Sobolev space with negative index. If $A:\cD(A)\subseteq X\to X$ is a densely defined operator with $\rho(A)\neq \emptyset$, where $\rho(A)$ denotes the resolvent set of~$A$, then for any $\beta\in\rho(A)$ we denote by $X_{-1}$ the completion of $X$ with respect to the norm
\[
    \|x\|_{X_{-1}}=\|(\beta I-A)^{-1}x\|_X,\quad x\in X.
\]
The norms generated as above for different $\beta\in\rho(A)$ are equivalent and, in particular, $X_{-1}$ is independent of the choice of~$\beta$. If $A$ generates a $C_0$-semigroup $(T(t))_{t\ge 0}$ in $X$, then the latter has a unique extension to a semigroup $(T_{-1}(t))_{t\ge 0}$ in~$X_{-1}$, which is given by
\[
    T_{-1}(t) = (\beta I - A_{-1}) T(t),\quad t\ge 0,
\]
where $(\beta I - A_{-1}) \in \cB(X;X_{-1})$ is a surjective isometry. Therefore, $A_{-1}$ is the generator of the semigroup $(T_{-1}(t))_{t\ge 0}$.

In infinite-dimensional linear systems theory with unbounded control operators, the existence of mild solutions is closely related to the notion of \textit{admissibility}, see e.g.~\cite{TucsWeis09}. Let $U, X, Y$ be Hilbert spaces and $A$ as above such that it generates a $C_0$-semigroup $(T(t))_{t\ge 0}$ on $X$. Then we recall that $B\in \cB(U;X_{-1})$ is a {\it $L^p$-admissible}  control operator (for $(T(t))_{t\ge 0}$), with $p\in [1,\infty]$, if
\[
    \forall\,  t\ge 0\ \forall\, u\in L^{p}([0,t];U):\ \int_{0}^{t} {T}_{-1}(t-s)Bu(s)\ds{s} \in X.
\]

\subsection{The Fokker-Planck equation for a controlled stochastic process}

We consider a controlled stochastic process described by the It\^{o} stochastic differential equation (cf.~\cite[Sec.~11]{Okse03})
\begin{equation}\label{eq:SDE}
     {\rm d} X_t = b(t,X_t,u(t)) {\rm d} t + {\sigma(t,X_t)}  {\rm d} W_t,\quad X(t=0) = X_0,
\end{equation}
where $X_t:\Omega\to\R^n$, $t\ge 0$, are random vectors and $\Omega$ is the sample space of a probability space $(\Omega,\cF,P)$. $(W_t)_{t\ge0}$ denotes a $d$-dimensional Wiener process with zero mean value and unit variance, $b:\R_{\ge 0} \times \R^n \times \R^m\to \R^n$ is a drift function and {$\sigma:\R_{\ge 0} \times \R^n \to \R^{n\times d}$ is a diffusion coefficient.} 
The function $u:\R_{\ge 0}\to\R^m$ is the control input.

Using the framework presented in~\cite{AnnuBorz10} we can formulate the control problem for the probability density function of the stochastic process~$(X_t)_{t\ge 0}$ as a partial differential equation, the Fokker-Planck equation. This approach is feasible under appropriate assumptions on the functions~$b$ and~$\sigma$ as shown in~\cite{PrimKont04,Prot05}.  Define
\[
    {C:\R_{\ge 0} \times \R^n \to \R^{n\times n},\ (t,x) \mapsto \tfrac12 \sigma(t,x) \sigma(t,x)^\top,}
\]
then the probability density function $p:\R_{\ge0}\times \R^n\to\R$ associated with the process $(X_t)_{t\ge 0}$ evolves according to the Fokker-Planck equation
\begin{equation}\label{eq:FPE}
\begin{aligned}
  \frac{\partial p}{\partial t}(t,x) &= -\sum_{i=1}^n \frac{\partial}{\partial x_i} \big( b_i(t,x,u(t)) p(t,x)\big) && \\
   &\quad + \sum_{i,j=1}^n \frac{\partial^2}{\partial x_i \partial x_j} \big({C_{ij}(t,x)} p(t,x)\big), &&\text{in}\ (0,\infty)\times \R^n,\\
  p(0,x) &= p_0(x), &&\text{in}\ \R^n, 
\end{aligned}
\end{equation}
and additionally, since~$p$ is a probability density, we require
\begin{equation}\label{eq:FPE-cond}
\begin{aligned}
  p(t,x) &\ge 0,\quad  \text{in}\ [0,\infty)\times \R^n,\\
  \int_{\R^n} p(t,x) {\rm d}x &= 1,\quad  \text{in}\ [0,\infty).
\end{aligned}
\end{equation}

The second condition in~\eqref{eq:FPE-cond} is the conservation of probability, while the first requires any probability to be non-negative.
Some conditions for the existence of nonnegative solutions of the Fokker-Planck equation are given in~\cite{AnnuBorz13,BreiKuni18,FleiGugl16} for instance.

\subsection{The Ornstein-Uhlenbeck process}

As a specific stochastic process, in this work we consider a multi-dimensional Ornstein-Uhlenbeck process and we assume that it can be controlled via the drift term only. Then it is modelled by an equation of the form~\eqref{eq:SDE} with $m=n\in\N$, $d\in\N$ and
\[
    b(t,x,u) = g(u) - \Gamma x,\quad {\sigma(t,x)} = S \in\R^{n\times d},\quad \Gamma\in\R^{n\times n}.
\]
A special one-dimensional version of this with $n = d = 1$, $g(u)=u$ and $\Gamma, S > 0$ is often encountered in the literature, see e.g.~\cite{AnnuBorz10,AnnuBorz13,FleiGrun16} and the references therein. Let us further stress that the equation is restricted to a bounded spatial domain in many works such as~\cite{AnnuBorz10,AnnuBorz13}, and Dirichlet boundary conditions are used; this is not the natural framework, cf.\ also Section~\ref{Sec:FPO}.

In the present work we assume that
\begin{enumerate}
  \item $C:= \tfrac12 S S^\top = c I_n$ for some $c >0$,
  \item $\Gamma$ is symmetric and positive definite, written $\Gamma = \Gamma^\top > 0$,
\end{enumerate}
and the function $g\in C^1(\R^n;\R^n)$ is linearly bounded and satisfies a \emph{high-gain property}, i.e.,
\begin{equation}\label{eq:HGP}
\begin{aligned}
    \exists\, \bar g>0\ \forall\, v\in\R^n:&\ \|g(v)\|_{\R^n} \le \bar g \|v\|_{\R^n},\\
    \exists\, \delta\in (0,1):&\ \sup_{s\in\R} \min_{\delta\le \|v\|_{\R^n}\le 1} v^\top g(-sv) = \infty.
\end{aligned}
\end{equation}
Assumptions (i) and (ii) guarantee that the Fokker-Planck operator is self-adjoint and positive and its eigenfunctions can be computed explicitly. Assumption~\eqref{eq:HGP} is required for feasibility of the proposed funnel control method. The associated Fokker-Planck equation~\eqref{eq:FPE} is then given in the form
\begin{equation}\label{eq:FPE-OU}
\begin{aligned}
 \dot p(t,x) &= \div \big( c \nabla p(t,x) + p(t,x) \big(\Gamma x - g(u(t))\big)\big), &&\text{in}\ (0,\infty)\times \R^n,\\
  p(0,x) &= p_0(x), &&\text{in}\ \R^n,
\end{aligned}
\end{equation}
where $\dot p = \frac{\partial p}{\partial t}$. For later use we define the function
\begin{equation}\label{eq:phi}
    \phi:\R^n\to\R,\ x\mapsto \frac{1}{2c} x^\top \Gamma x.
\end{equation}
Since it is unrealistic to assume that we can measure~$p(t,x)$ for all $t\ge 0$ and all $x\in \R^n$, we associate an output function $y:\R_{\ge 0}\to \R^n$ with~\eqref{eq:FPE-OU}. The output should be chosen in such a way that, by manipulating it via the control input, it is possible to influence the collective behavior of the process. As mentioned in~\cite{AnnuBorz10}, the mean value $E[X_t]$ ``is omnipresent in almost all stochastic optimal control problems considered in the scientific literature''.  Therefore, it is a reasonable choice for the output, i.e.,
\begin{equation}\label{eq:output}
  y(t) = E[X_t] = \begin{pmatrix} E[X_{t,1}] \\ \vdots\\ E[X_{t,n}] \end{pmatrix} = \begin{pmatrix} \int_{\R^n} x_1\, p(t,x) {\rm d}x \\ \vdots\\ \int_{\R^n} x_n\, p(t,x) {\rm d}x \end{pmatrix}.
\end{equation}
We assume that the measurement of the output~$y(t)$ is available to the controller at each time $t\ge 0$. In practice, the corresponding integrals cannot be calculated exactly, thus the components of the mean value will typically be approximated by data-driven methods such as Monte Carlo integration.

Note that controlling the Fokker-Planck equation via the drift term with mean value as output is indeed sufficient to influence the {behavior} of the solution density, since the covariance matrix of the process is independent of the control input. In particular, provided~\eqref{eq:FPE-cond} holds, we will show in Proposition~\ref{Prop:sln-prop} that the covariance matrix of the solution satisfies
\[
   \lim_{t\to\infty} \int_{\R^n} \big(x-y(t)\big) \big(x-y(t)\big)^\top p(t,x){\rm d} x = c \Gamma^{-1}.
\]

\subsection{Control objective}\label{Ssec:ContrObj}

The objective is to design a robust output error feedback $u(t) = F(t,e(t))$, where $e(t)= y(t) - y_{\rm ref}(t)$ for some reference trajectory $y_{\rm ref}\in W^{1,\infty}(\R_{\ge 0};\R^n)$, such that in the closed-loop system the tracking error $e(t)$ evolves within a prescribed performance funnel
\begin{equation}
\mathcal{F}_{\varphi} := \setdef{(t,e)\in\R_{\ge 0} \times\R^n}{\varphi(t) \|e\|_{\R^n} < 1},\label{eq:perf_funnel}
\end{equation}
which is determined by a function~$\varphi$ belonging to
\begin{equation}
\Phi \!:=\!
\left\{
\varphi\!\in\! C^1(\R_{\ge 0};\R)
\left|\!
\begin{array}{l}
\text{$\varphi(t)>0$ for all $t>0$},\ \liminf_{t\to\infty} \varphi(t)>0,\\
\exists\, \xi>0\ \forall\, t\ge0:\ |\dot \varphi(t)|\le \xi (1+\varphi(t))
\end{array}
\right.\!\!\!
\right\}.
\label{eq:Phir}\end{equation}
The robustness requirement on the control essentially means that it is feasible under bounded additive disturbances ``with zero mass'', which influence the Fokker-Planck equation. This is made precise in Section~\ref{Sec:SolProp}.

The performance funnel~$\mathcal{F}_{\varphi}$ accounts for the two objectives of~$y$ approaching~$y_{\rm ref}$ with prescribed transient behavior and asymptotic accuracy. Its boundary is given by the reciprocal of~$\varphi$, see Fig.~\ref{Fig:funnel}. We explicitly allow for $\varphi(0)=0$, meaning that no restriction on the initial value is imposed since $\varphi(0) \|e(0)\|_{\R^n} < 1$; the funnel boundary $1/\varphi$ has a pole at $t=0$ in this case. Furthermore, $\varphi$ may be unbounded and in this case asymptotic tracking may be achieved, i.e., $\lim_{t\to\infty} e(t) = 0$.

\begin{figure}[h]
\begin{center}
\begin{tikzpicture}[scale=0.45]
\tikzset{>=latex}
  \filldraw[color=gray!25] plot[smooth] coordinates {(0.15,4.7)(0.7,2.9)(4,0.4)(6,1.5)(9.5,0.4)(10,0.333)(10.01,0.331)(10.041,0.3) (10.041,-0.3)(10.01,-0.331)(10,-0.333)(9.5,-0.4)(6,-1.5)(4,-0.4)(0.7,-2.9)(0.15,-4.7)};
  \draw[thick] plot[smooth] coordinates {(0.15,4.7)(0.7,2.9)(4,0.4)(6,1.5)(9.5,0.4)(10,0.333)(10.01,0.331)(10.041,0.3)};
  \draw[thick] plot[smooth] coordinates {(10.041,-0.3)(10.01,-0.331)(10,-0.333)(9.5,-0.4)(6,-1.5)(4,-0.4)(0.7,-2.9)(0.15,-4.7)};
  \draw[thick,fill=lightgray] (0,0) ellipse (0.4 and 5);
  \draw[thick,fill=gray!25] (10.041,0) ellipse (0.1 and 0.333);
  \draw[thick] plot[smooth] coordinates {(0,2)(2,1.1)(4,-0.1)(6,-0.7)(9,0.25)(10,0.15)};
  \draw[thick,->] (-2,0)--(12,0) node[right,above]{\normalsize$t$};
  \node [black] at (0,2) {\textbullet};
  \draw[->,thick](3,3)node[right]{\normalsize$(0,e(0))$}--(0.07,2.07);
  \draw[->,thick](9,3)node[right]{\normalsize$1/\varphi(t)$}--(7,1.4);
\end{tikzpicture}
\end{center}
\vspace*{-2mm}
\caption{Error evolution in a funnel $\mathcal F_{\varphi}$ with boundary $1/\varphi(t)$.}
\vspace*{-4mm}
\label{Fig:funnel}
\end{figure}
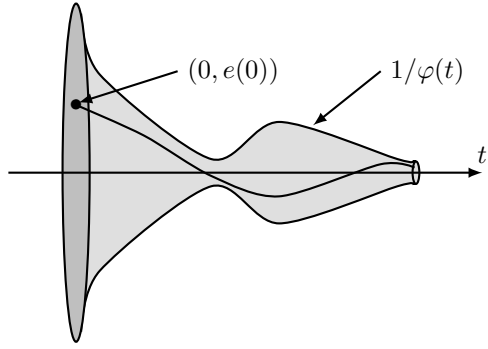

It is of utmost importance to notice that the function $\varphi\in\Phi$ is a design parameter in the control law (stated in Section~\ref{Sec:FunCon}), thus its choice is completely up to the designer. In particular, the designer must impose a priori, whether or not asymptotic tracking should be achieved. Typically, the specific application dictates the constraints on the tracking error and thus indicates suitable choices for~$\varphi$. We stress that the funnel boundary is not necessarily monotonically decreasing, while such a choice may be convenient in most situations. However, widening the funnel over some later time interval might be beneficial, for instance in the presence of strongly varying reference signals or periodic disturbances. A variety of different funnel boundaries are possible, see e.g.~\cite[Sec.~3.2]{Ilch13}.

\subsection{Organization of the present paper}

In Section~\ref{Sec:FPO} we introduce the mathematical framework around the Fokker-Planck operator associated to equation~\eqref{eq:FPE-OU}. We emphasize that we consider an unbounded spatial domain in~\eqref{eq:FPE-OU}, without any boundary conditions. Using weighted Lebesgue and Sobolev spaces, first an auxiliary operator is defined via a suitable sesquilinear form. This operator is then transformed to the desired Fokker-Planck operator. 
We stress that a spectral analysis of the Fokker-Planck operator is necessary in order to obtain a well-defined ``integration by parts''-formula, which in turn is required to show admissiblity of the bilinear control operator involved in~\eqref{eq:FPE-OU}. The definition of a mild solution is given in Section~\ref{Sec:SolProp} and it is shown that any solution satisfies~\eqref{eq:FPE-cond} and that its covariance matrix is independent of the input and exponentially converges to $c\Gamma^{-1}$. Furthermore, $L^2$-admissibility of the control operator is shown, which is the basis for the feasibility proof of the robust funnel controller in Section~\ref{Sec:FunCon}. A simple (non-robust) feedforward control approach is discussed in Section~\ref{Sec:NonRobCon}, which may be favourable when the system parameters are known and no disturbances are present. We emphasize that the closed-loop system corresponding to the application of the funnel controller, see equation~\eqref{eq:funnel-CL}, is a nonlinear and time-varying PDE, thus proving existence and uniqueness of solutions is a nontrivial task. We illustrate our results by some simulations in Section~\ref{Sec:Sim}.

\section{The Fokker-Planck operator}\label{Sec:FPO}

In this section we introduce an operator which can be associated with the PDE~\eqref{eq:FPE-OU} in the uncontrolled case, i.e., $u=0$. To this end, we invoke form methods for which we frequently refer to~\cite{ArenChil15} and~\cite{ArenElst12}. Consider the system~\eqref{eq:FPE-OU} with $\phi$ as defined in~\eqref{eq:phi}. To begin with, let
\[
    H:= L^2(\R^n;e^{-\phi})\quad \text{and}\quad V:=W^{1,2}(\R^n;e^{-\phi})
\]
and define the sesquilinear form
\begin{equation}\label{eq:form-a}
    a: V\times V\to \R,\ (v_1,v_2) \mapsto \sum_{i=1}^n \left\langle \frac{\partial v_1}{\partial x_i}, \frac{\partial v_2}{\partial x_i} \right\rangle_H = \langle \nabla v_1, \nabla v_2 \rangle_{H^n},
\end{equation}
to which we may associate an operator as follows.

\begin{proposition}\label{Prop:form-a-operator}
Consider the form~\eqref{eq:form-a}, then there exists exactly one operator $A : \cD(A) \subset V \to H$ with
\[
    \cD(A) = \setdef{ v\in V}{ \exists\, u\in H\ \forall\, z\in V:\ a(v,z) = \langle u,z\rangle_H}
\]
and
\[
    \forall\, v\in \cD(A)\ \forall\, z\in V:\ a(v,z) = \langle Av, z\rangle_H.
\]
Moreover, $A$ is self-adjoint, positive and has compact resolvent.
\end{proposition}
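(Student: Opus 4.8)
The plan is to realize $A$ as the operator associated with $a$ through the first representation theorem for bounded, $H$-elliptic sesquilinear forms (as developed in \cite{ArenChil15,ArenElst12}), and then to read off self-adjointness, positivity, and compactness of the resolvent from the structure of $a$ together with a compact embedding of $V$ into $H$. To invoke the representation theorem I first verify its three hypotheses. The embedding $V\hookrightarrow H$ is continuous, since $\|v\|_H\le\|v\|_V$, and dense, since $C_c^\infty(\R^n)\subseteq V$ is already dense in $H$ (the weight $e^{-\phi}$ being locally integrable and almost everywhere positive). The form $a$ is bounded because, by the Cauchy--Schwarz inequality, $|a(v_1,v_2)|=|\langle\nabla v_1,\nabla v_2\rangle_{H^n}|\le\|v_1\|_V\|v_2\|_V$. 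Finally, $a$ is $H$-elliptic: by the very definition of the weighted Sobolev norm one has the identity $a(v,v)+\|v\|_H^2=\|\nabla v\|_{H^n}^2+\|v\|_H^2=\|v\|_V^2$, so the G\r{a}rding estimate $a(v,v)+\|v\|_H^2\ge\|v\|_V^2$ holds with constant $1$. The representation theorem then produces the unique operator $A$ with the stated domain and the relation $a(v,z)=\langle Av,z\rangle_H$; the same identity shows that the form norm coincides with the norm of $V$, so $a$ is closed.

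Self-adjointness and positivity are then immediate. Since the inner product on $H^n$ is symmetric, $a(v_1,v_2)=a(v_2,v_1)$, so $a$ is a densely defined, closed, symmetric form that is bounded below; such a form is associated with a self-adjoint operator, whence $A=A^\ast$. Positivity (in the sense of nonnegativity) is just $\langle Av,v\rangle_H=a(v,v)=\|\nabla v\|_{H^n}^2\ge 0$ for all $v\in\cD(A)$. Note that $A$ is not strictly positive: the constant functions belong to $V$ (because $e^{-\phi}$ is integrable) and are annihilated by $a$, so $0$ is an eigenvalue of $A$.

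It remains to prove that $A$ has compact resolvent. For a self-adjoint operator bounded below it is a standard fact that compactness of the resolvent is equivalent to compactness of the embedding of its form domain $V$ into $H$, so the task reduces to showing $V\hookrightarrow H$ compactly. Given a bounded sequence $(v_k)$ in $V$, I would argue in three steps. (A) On each ball $B_R=\setdef{x\in\R^n}{\|x\|_{\R^n}<R}$ the continuous weight $e^{-\phi}$ is bounded between positive constants, so the restrictions of $(v_k)$ are bounded in the unweighted space $W^{1,2}(B_R)$ and, by the Rellich--Kondrachov theorem, precompact in $L^2(B_R)$ and hence — the two norms being equivalent on $B_R$ — in $L^2(B_R;e^{-\phi})$. (B) A diagonal argument over $R\in\N$ yields a single subsequence converging in $L^2(B_R;e^{-\phi})$ for every $R$. (C) A uniform tail estimate controls the remainders over $\R^n\setminus B_R$.

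Step (C) is the main obstacle, since on the unbounded domain $\R^n$ compactness is a genuine consequence of the decay of $e^{-\phi}$ and must be quantified. I would establish the weighted Poincar\'e-type inequality
\[
    \int_{\R^n}\|x\|_{\R^n}^2\,|v(x)|^2\,e^{-\phi(x)}\,{\rm d}x\le C\,\|v\|_V^2,\quad v\in V,
\]
first for $v\in C_c^\infty(\R^n)$ and then by density. The derivation integrates $\div\big(\Gamma x\,|v|^2 e^{-\phi}\big)$ over $\R^n$ — the boundary contribution vanishing by the Gaussian decay of the integrand — and exploits $\nabla e^{-\phi}=-\tfrac1c\Gamma x\,e^{-\phi}$ together with $\Gamma=\Gamma^\top>0$; a Young inequality absorbs the cross term $2\int_{\R^n} v\,(\Gamma x)^\top\nabla v\,e^{-\phi}\,{\rm d}x$. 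With this inequality the tails obey $\int_{\|x\|_{\R^n}\ge R}|v_k|^2 e^{-\phi}\,{\rm d}x\le R^{-2}\int_{\R^n}\|x\|_{\R^n}^2|v_k|^2 e^{-\phi}\,{\rm d}x\le C R^{-2}\|v_k\|_V^2$, which tends to $0$ uniformly in $k$; combined with (A)--(B) this makes the diagonal subsequence Cauchy in $H$. Hence $V\hookrightarrow H$ is compact and $A$ has compact resolvent. As an alternative to (C), the ground-state transform $v\mapsto e^{-\phi/2}v$ identifies $A$, up to unitary equivalence, with the Schr\"odinger operator $-\Delta+W$ on $L^2(\R^n)$ with $W=\tfrac14|\nabla\phi|^2-\tfrac12\Delta\phi$, a potential growing quadratically at infinity, for which compactness of the resolvent is classical.
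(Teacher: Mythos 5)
Your proof is correct, and it diverges from the paper in the one step that carries all the analytic weight. The first half is essentially identical: like the paper, you obtain $A$ from the representation theorem for bounded forms with $V\hookrightarrow H$ continuous and dense (the paper cites \cite[Prop.~5.5]{ArenChil15}), and your route to self-adjointness via the closed symmetric form bounded below is equivalent to the paper's device of passing to the shifted coercive form $b(v_1,v_2)=a(v_1,v_2)+\langle v_1,v_2\rangle_H$, applying \cite[Cor.~6.18]{ArenChil15} to $B=A+I$, and setting $A=B-I$. The real difference is the compactness of the embedding $V\hookrightarrow H$: the paper simply cites \cite[Prop.~6.2]{John00}, a result on Witten-Laplacians, whereas you prove it from scratch via Rellich--Kondrachov on balls (where $e^{-\phi}$ is bounded above and below), a diagonal subsequence, and a uniform tail bound from the weighted moment inequality $\int_{\R^n}\|x\|_{\R^n}^2|v|^2e^{-\phi}\,{\rm d}x\le C\|v\|_V^2$; your derivation of that inequality (integrate $\div\big(\Gamma x\,|v|^2e^{-\phi}\big)$, use $\nabla e^{-\phi}=-\tfrac1c\Gamma x\,e^{-\phi}$, absorb the cross term by Young) is sound, and your alternative via the ground-state transform to $-\Delta+\tfrac14|\nabla\phi|^2-\tfrac12\Delta\phi$ with quadratically growing potential is precisely the Witten-Laplacian picture underlying the paper's citation. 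What your approach buys is a self-contained, elementary argument that also quantifies the decay responsible for compactness on the unbounded domain; what the paper's citation buys is brevity and access to the full spectral package in \cite{John00}. The only detail to nail down in a full write-up is the density step used to extend the moment inequality from $C_c^\infty(\R^n)$ to $V$: density of $C_c^\infty(\R^n)$ in the weighted space $V$ does hold for this smooth Gaussian-type weight (truncate with cutoffs $\chi_R$, $|\nabla\chi_R|\le C/R$, then mollify), or you can bypass density entirely by applying the inequality to $\chi_R v$ and letting $R\to\infty$ with monotone convergence; either way this is routine and not a genuine gap.
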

\begin{proof} We show that the operator~$A$ exists as stated. By the Cauchy-Schwarz inequality we have
\begin{align*}
    a(v,u) &\le \|\nabla v\|_{H^n} \|\nabla u\|_{H^n} \le \left( \|v\|_H^2 + \|\nabla v\|_{H^n}^2\right)^{1/2} \left( \|u\|_H^2 + \|\nabla u\|_{H^n}^2\right)^{1/2} = \|v\|_V \|u\|_V
\end{align*}
for all $v,u\in V$, and hence the form~$a$ is bounded. Since the injection $j:V\to H$ is clearly continuous with dense range, it follows from~\cite[Prop.~5.5]{ArenChil15} that~$A$ exists and is positive since~$a$ is positive.

We show~(i): As above, there exists an operator $B: \cD(B)\subset V\to H$ associated to the sesquilinear form
\[
    b: V\times V\to \R,\ (v_1,v_2) \mapsto a(v_1,v_2) + \langle v_1, v_2 \rangle_H
\]
which satisfies $\cD(B) = \cD(A)$ and $B = A + I$, cf.~\cite[Rem.~5.6]{ArenChil15}. The form~$b$ is obviously bounded and symmetric and satisfies $b(v,v) = \|v\|_V^2$, thus it is coercive. Further observe that by~\cite[Prop.~6.2]{John00} the injection $j:V\to H$ is additionally compact. Hence it follows from~\cite[Cor.~6.18]{ArenChil15} that the operator~$B$ is self-adjoint, positive and has compact resolvent. As a consequence, $A = B-I$ is also self-adjoint and has compact resolvent.
\end{proof}

Next we show that $-A$ is the generator of a $C_0$-semigroup with certain properties.

\begin{lemma}\label{Lem:A-generator}
The operator~$-A$ from Proposition~\ref{Prop:form-a-operator} generates an analytic contraction semigroup on~$H$.
\end{lemma}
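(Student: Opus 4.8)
The plan is to read off everything from Proposition~\ref{Prop:form-a-operator}, whose conclusions—$A$ is self-adjoint and positive—already contain essentially all the structural information needed; no further form theory is really required. The only computation I would record first is that for $v\in\cD(A)\subseteq V$ one has $\langle Av,v\rangle_H = a(v,v) = \|\nabla v\|_{H^n}^2 \ge 0$, so that $A$ is accretive. Combined with self-adjointness (hence $\sigma(A)\subseteq\R$), accretivity forces $\sigma(A)\subseteq[0,\infty)$, i.e.\ $A$ is m-accretive. These two facts, positivity of the numerical range and location of the spectrum, drive both remaining assertions.

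For the contraction property I would apply the Lumer--Phillips theorem to $-A$. It is densely defined (since $\cD(A)$ is dense in $H$) and dissipative because $\RE\langle -Av,v\rangle_H = -a(v,v)\le 0$ for all $v\in\cD(A)$; moreover, for every $\lambda>0$ we have $-\lambda\in\rho(A)$, so $\ran(\lambda I+A)=H$. Lumer--Phillips then yields that $-A$ generates a contraction semigroup $(T(t))_{t\ge0}$ on $H$. For analyticity I would invoke the spectral theorem for the self-adjoint operator $A$: writing $A=\int_{[0,\infty)}\lambda\,dE(\lambda)$ with spectral measure $E$ supported on $[0,\infty)$, the bounded Borel functional calculus defines $T(z):=\int_{[0,\infty)}e^{-z\lambda}\,dE(\lambda)$ for $z$ in the open right half-plane $\{\RE z>0\}$. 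Since $|e^{-z\lambda}|=e^{-(\RE z)\lambda}\le 1$ whenever $\RE z\ge0$ and $\lambda\ge0$, one obtains $\|T(z)\|_{\cB(H;H)}\le 1$ on the whole right half-plane, while $z\mapsto T(z)$ is holomorphic there and restricts to the contraction semigroup on the positive real axis. This exhibits $(T(t))_{t\ge0}$ as a bounded analytic semigroup of angle $\pi/2$, hence an analytic contraction semigroup, as claimed.

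The subtle point—the main obstacle—is that one cannot simply quote the textbook statement ``a continuous coercive form generates an analytic contraction semigroup'', because the form $a$ itself is \emph{not} coercive: it is only nonnegative, with coercivity available merely for the shifted form $b=a+\langle\cdot,\cdot\rangle_H$ and its associated operator $B=A+I$ from the proof of Proposition~\ref{Prop:form-a-operator}. Applying such a form theorem to $-B$ does give analyticity, and since $-A=-B+I$ is a bounded perturbation of an analytic generator, $-A$ inherits analyticity; but the shift multiplies the semigroup by $e^{t}$ and destroys the contraction bound, so contractivity would still have to be recovered separately from the positivity of $A$. I therefore expect the spectral-theoretic route to be cleanest, because the single estimate $|e^{-z\lambda}|\le 1$ on the closed right half-plane settles contractivity and analyticity simultaneously, and it relies only on the two properties of $A$ already proved.
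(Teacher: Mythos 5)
Your proposal is correct, and it coincides with the paper on the first half while diverging on the second. For the contraction property the paper does exactly what you do: it applies the Lumer--Phillips theorem (citing \cite[Thms.~3.18~\&~6.1]{ArenChil15}), with dissipativity of $-A$ coming from positivity of the form. For analyticity, however, the paper stays entirely within form methods and simply cites \cite[Thm.~4.3]{ArenElst12}, whereas you leave the form framework and use the spectral theorem for the self-adjoint nonnegative operator $A$, setting $T(z)=\int_{[0,\infty)}e^{-z\lambda}\,{\rm d}E(\lambda)$ and reading off $\|T(z)\|_{\cB(H;H)}\le 1$ together with holomorphy on the open right half-plane. Both routes are valid; yours is more self-contained (it uses only the two conclusions of Proposition~\ref{Prop:form-a-operator}) and actually yields slightly more, namely a bounded analytic semigroup of angle $\pi/2$ that is contractive on the whole closed right half-plane. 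One clarification on your stated ``main obstacle'': the non-coercivity of $a$ is indeed a reason not to quote the naive ``coercive form $\Rightarrow$ analytic contraction semigroup'' statement, but it does not trouble the paper's citation, since the result in \cite{ArenElst12} requires only a closed sectorial (in particular, symmetric accretive) form --- a symmetric nonnegative form is sectorial of angle $0$ --- so no shift to $b=a+\langle\cdot,\cdot\rangle_H$ and no subsequent rescaling by $e^{t}$ is needed there. Finally, a small redundancy in your writeup: once you invoke the functional calculus, $t\mapsto e^{-tA}$ is automatically a strongly continuous contraction semigroup with generator $-A$ (strong continuity at $t=0$ follows by dominated convergence for the spectral measure), so the separate Lumer--Phillips step could be omitted; as written it is harmless but does double duty with the spectral argument, which is the part carrying the analyticity claim.
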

\begin{proof}
By Proposition~\ref{Prop:form-a-operator} and the Lumer-Phillips theorem (see e.g.~\cite[Thms.~3.18~\&~6.1]{ArenChil15}) we find that $-A$ generates a contraction semigroup on~$H$. Further invoking~\cite[Thm.~4.3]{ArenElst12} we find that this semigroup is also analytic.
\end{proof}

In the following we explicitly derive the eigenvalues and eigenfunctions of~$A$. To this end, we first observe that the matrix $\Gamma$ is symmetric and positive definite, hence there exists an orthonormal basis $\{v_1,\ldots, v_n\}$ of $\R^n$ of eigenvectors of $\Gamma$ with $\Gamma v_k = \tilde \lambda_k v_k$ with $\tilde \lambda_k > 0$, $k=1,\ldots, n$. We define $\lambda_k := \tilde\lambda_k / c$ and $u_k := \sqrt{\lambda_k/2} v_k$ for $k=1,\ldots, n$. Since we have, for all $x\in\R^n$, that $\tfrac{1}{c} \Gamma x = \sum_{k=1}^n \lambda_k v_k^\top x v_k$, we record for later use that
\begin{equation}\label{eq:Gamma-uk}
    \frac{1}{c} \Gamma x = 2 \sum_{k=1}^n u_k^\top x u_k.
\end{equation}
Furthermore, recall the Hermite polynomials defined by
\[
    H_n(x) = (-1)^n e^{x^2} \left( \frac{{\rm d}^n}{{\rm d} x^n} e^{-x^2}\right),\quad x\in\R,\ n\in\N_0.
\]
It is well known that these polynomials have, for all $x\in\R$ and all $m,n\in\N_0$, the properties
\begin{enumerate}
  \item $H_{n+1}(x) = 2x H_n(x) - H_n'(x)$,
  \item $H_{n}'(x) = 2n H_{n-1}(x)$, where $H_{-1}(x) := 0$,
  \item $\int_{-\infty}^\infty e^{-x^2} H_n(x) H_m(x)\, {\rm d}x = \sqrt{\pi} 2^n n!\, \delta_{n,m}$, where $\delta_{n,m}$ denotes the Kronecker delta.
\end{enumerate}
Now let $\alpha \in (\N_0)^n$ be a multi-index. Then we define
\begin{equation}\label{eq:H-lambda-alpha}
  H_\alpha:\R^n\to\R,\ x\mapsto \prod_{k=1}^n H_{\alpha_k}(u_k^\top x),\quad
  \lambda_\alpha := \sum_{k=1}^n \alpha_k \lambda_k,
\end{equation}
which turn out to be the eigenfunctions and eigenvalues of $A$.

\begin{proposition}\label{Prop:eigenval-A}
Consider the operator~$A$ from Proposition~\ref{Prop:form-a-operator} and the eigenvectors~$u_k$ and eigenvalues~$\lambda_k$ of $c^{-1}\Gamma$. Then the spectrum of $A$ is given by
\[
    \sigma(A) = \setdef{\lambda_\alpha}{ \alpha \in (\N_0)^n}
\]
and the set $\setdef{ H_\alpha}{\alpha \in (\N_0)^n}$ constitutes a complete orthogonal system in~$H$ consisting of eigenfunctions of~$A$ with $A H_\alpha = \lambda_\alpha H_\alpha$ for all $\alpha \in (\N_0)^n$.
Furthermore, we have
\begin{enumerate}
  \item $\nabla \big( e^{-\phi(x)} H_\alpha(x)\big) = - e^{-\phi(x)} \sum_{k=1}^n \left( \prod_{j\neq k} H_{\alpha_j}(u_j^\top x)\right) H_{\alpha_k+1}(u_k^\top x) u_k$ for all $x\in\R^n$ and $\alpha \in (\N_0)^n$,
  \item $\lim_{r\to \infty} \int_{S_r} e^{-\phi(x)} H_\alpha(x) w(x) \cdot \vec{n}\, {\rm d}S = 0$ for all $w\in V^n$ and $\alpha \in (\N_0)^n$, where $S_r = \setdef{x\in\R^n}{\phi(x) = r}$ and $\vec{n}$ is the outward unit normal vector to its boundary.
\end{enumerate}
\end{proposition}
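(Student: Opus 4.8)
The plan is to exploit the product structure that the change of variables $\xi_k := u_k^\top x$ induces, proving the two auxiliary formulae (i) and (ii) first and then assembling them into the spectral statement. First I would record that the weight decouples: by~\eqref{eq:Gamma-uk} one has $\phi(x) = \tfrac12 x^\top(c^{-1}\Gamma)x = x^\top\sum_{k=1}^n(u_k^\top x)u_k = \sum_{k=1}^n(u_k^\top x)^2$, so $e^{-\phi(x)} = \prod_k e^{-(u_k^\top x)^2}$ and $\nabla\phi(x) = 2\sum_k(u_k^\top x)u_k$. Since $u_k^\top u_l = \tfrac{\lambda_k}{2}\delta_{kl}$, the linear map $x\mapsto\xi=(u_1^\top x,\dots,u_n^\top x)$ carries $H$ isometrically, up to the constant Jacobian, onto $L^2(\R^n;e^{-\|\xi\|^2})$, under which $H_\alpha$ becomes the tensor product $\prod_k H_{\alpha_k}(\xi_k)$. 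Formula (i) is then a direct computation: applying the product rule to $e^{-\phi}H_\alpha$ and inserting $\nabla\phi = 2\sum_k\xi_k u_k$ produces, in the $k$th slot, the factor $-2\xi_k H_{\alpha_k}(\xi_k) + H_{\alpha_k}'(\xi_k)$, which equals $-H_{\alpha_k+1}(\xi_k)$ by the Hermite recurrence (item~(i) of the listed properties).

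Next I would establish the eigenvalue equation. Integrating the form~\eqref{eq:form-a} formally by parts identifies $A$ with $-\Delta+\nabla\phi\cdot\nabla$. Because $u_k^\top u_l = \tfrac{\lambda_k}{2}\delta_{kl}$, both $\Delta H_\alpha$ and $\nabla\phi\cdot\nabla H_\alpha$ diagonalize over the index $k$, and the one-dimensional Hermite differential equation $-H_m''+2xH_m' = 2mH_m$ (a consequence of items~(i) and~(ii)) yields the pointwise identity $-\Delta H_\alpha+\nabla\phi\cdot\nabla H_\alpha = \big(\sum_k\alpha_k\lambda_k\big)H_\alpha = \lambda_\alpha H_\alpha$, equivalently $\div(e^{-\phi}\nabla H_\alpha) = -\lambda_\alpha e^{-\phi}H_\alpha$. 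To promote this to $H_\alpha\in\cD(A)$ with $AH_\alpha = \lambda_\alpha H_\alpha$, I would verify the weak identity $a(H_\alpha,z) = \lambda_\alpha\langle H_\alpha,z\rangle_H$ for every $z\in V$, by integrating by parts over the ellipsoids $B_r := \{\,x : \phi(x)\le r\,\}$, passing to the limit $r\to\infty$ via dominated convergence (the relevant integrands are products of two $L^2(\R^n;e^{-\phi})$ functions, hence lie in $L^1(\R^n)$), and discarding the boundary term as in~(ii).

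The hard part is precisely this vanishing of the boundary integrals, which is the content of~(ii). The decisive observation is that $e^{-\phi}\equiv e^{-r}$ is \emph{constant} on the level set $S_r$, so the divergence theorem on $B_r$ gives $\int_{S_r}e^{-\phi}H_\alpha w\cdot\vec{n}\,{\rm d}S = e^{-r}\int_{B_r}\div(H_\alpha w)\,{\rm d}x$; this is legitimate because on the compact set $\overline{B_r}$ the weight is bounded above and below, so $H_\alpha w\in W^{1,1}(B_r)^n$. Expanding $\div(H_\alpha w) = \nabla H_\alpha\cdot w + H_\alpha\div w$ and applying the weighted Cauchy-Schwarz inequality, while using $e^{\phi}\le e^{r}$ on $B_r$ together with the fact that $\int_{B_r}|\nabla H_\alpha|^2$ and $\int_{B_r}H_\alpha^2$ grow only polynomially in $r$, bounds the right-hand side by $e^{-r}\cdot e^{r/2}\cdot\mathrm{poly}(r) = e^{-r/2}\,\mathrm{poly}(r)\to 0$, since $w$ and $\div w$ belong to $H$. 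The same estimate with $H_\alpha$ replaced by $z\in V$ and $w$ by $\nabla H_\alpha$ disposes of the boundary term needed in the weak identity above.

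Finally I would assemble the spectral claim. Orthogonality of $\{H_\alpha\}$ follows from the change of variables and the factorization into one-dimensional integrals $\int_\R e^{-\xi_k^2}H_{\alpha_k}(\xi_k)H_{\beta_k}(\xi_k)\,{\rm d}\xi_k$, evaluated by item~(iii). Completeness follows because, under the same isometry, $H\cong\bigotimes_k L^2(\R;e^{-\xi_k^2})$ and $\{H_m\}_{m\in\N_0}$ is a complete orthogonal system in each factor. Having produced a complete orthogonal system of eigenfunctions with $AH_\alpha = \lambda_\alpha H_\alpha$, and $A$ being self-adjoint with compact resolvent by Proposition~\ref{Prop:form-a-operator}, the spectrum must be purely discrete and exhausted by these eigenvalues (any further eigenvalue would have an eigenfunction orthogonal to all $H_\alpha$, contradicting completeness), so $\sigma(A) = \{\lambda_\alpha : \alpha\in(\N_0)^n\}$; this set is closed since the $\lambda_\alpha$ accumulate only at $+\infty$.
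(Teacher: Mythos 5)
Your proposal is correct, and it follows the paper's overall skeleton: prove (ii) first as the technical workhorse, establish $H_\alpha\in\cD(A)$ via the weak identity $a(H_\alpha,z)=\lambda_\alpha\langle H_\alpha,z\rangle_H$ with boundary terms killed by (ii), get completeness by reducing to the Gaussian weight through $\xi_k=u_k^\top x$, and obtain (i) from the product rule and the recurrence $H_{m+1}(\xi)=2\xi H_m(\xi)-H_m'(\xi)$. Where you genuinely diverge is in the proof of the key lemma (ii). The paper fixes one variable at a time, shows that the slice functions $f_{j,k}$ and their derivatives lie in $L^2(\R)$, invokes Barb\u{a}lat's lemma to get pointwise decay, assembles this into $e^{-3\phi/4}H_\alpha w\in L^\infty(\R^n)^n$, and then bounds the surface integral directly by $Ke^{-r/4}r^{n-1}$. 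You instead exploit that $e^{-\phi}\equiv e^{-r}$ is constant on $S_r$, pull it out, convert the surface integral into $e^{-r}\int_{B_r}\div(H_\alpha w)\,{\rm d}x$ by the divergence theorem (legitimate, since $w\in V^n$ gives $H_\alpha w\in W^{1,2}(B_r)^n\subset W^{1,1}(B_r)^n$ on the smooth ellipsoid $B_r$), and beat the prefactor $e^{-r}$ against the weighted Cauchy--Schwarz bound $e^{r/2}\,\mathrm{poly}(r)$. Your route buys two things: it needs only integral, trace-level information about $w$, sidestepping the paper's somewhat delicate assembly of slice-wise limits into a global $L^\infty$ bound (``it is easily seen''), and it is manifestly symmetric in which factor is the polynomial, so the second boundary term $\int_{S_r}e^{-\phi}z\,\nabla H_\alpha\cdot\vec{n}\,{\rm d}S$ falls to the identical estimate, whereas the paper must rewrite $\nabla H_\alpha=2\sum_k\alpha_k H_{\alpha^{-k}}u_k$ to re-apply (ii) with $w=zu_k$. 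What the paper's route buys in exchange is the stronger pointwise conclusion $e^{-3\phi/4}H_\alpha w\in L^\infty(\R^n)^n$. Two smaller differences: you route the eigenvalue computation through the one-dimensional Hermite ODE $-H_m''+2\xi H_m'=2mH_m$ after diagonalizing over $k$ (the bookkeeping $\|u_k\|_{\R^n}^2=\lambda_k/2$, which converts $2\alpha_k$ into $\alpha_k\lambda_k$, deserves to be written out), where the paper computes $\div\bigl(e^{-\phi}\nabla H_\alpha\bigr)=-\lambda_\alpha e^{-\phi}H_\alpha$ directly from the recurrences; and for completeness you tensorize into one-dimensional factors (do cite completeness of the Hermite system in $L^2(\R;e^{-\xi^2})$), where the paper cites the multivariate result of Xu. Your closing argument that $\sigma(A)=\setdef{\lambda_\alpha}{\alpha\in(\N_0)^n}$ -- compact resolvent plus orthogonality of eigenfunctions for distinct eigenvalues plus local finiteness of the set $\{\lambda_\alpha\}$ -- is in fact more explicit than the paper's one-line claim.
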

\begin{proof}
\emph{Step 1}: We first show~(ii), since it is needed for the other assertions. Fix $\alpha \in (\N_0)^n$, $j,k\in\{1,\ldots,n\}$ and $(x_1,\ldots,x_{k-1},x_{k+1},\ldots,x_n)\in\R^{n-1}$. Define the function
\[
    f_{j,k}:\R\to\R^n, x_k \mapsto e^{-3\phi(x)/4} H_\alpha(x) w_j(x).
\]
Since $H_\alpha$ is a polynomial we have that $\big(x_k\mapsto e^{-\phi(x)/4} H_\alpha(x)\big)\in L^\infty(\R)$. Furthermore, $w_j\in V$ yields that $e^{-\phi/2}w_j\in L^2(\R^n)$ and  $e^{-\phi/2} \tfrac{\partial w_j}{\partial x_k} \in L^2(\R^n)$. Hence, by Fubini's theorem we have that $\big(x_k\mapsto e^{-\phi(x)} w_j(x)^2\big), \big(x_k\mapsto e^{-\phi(x)} \left(\tfrac{\partial w_j}{\partial x_k}\right)(x)^2\big) \in L^1(\R)$. Therefore,
\begin{align*}
    &\left[x_k\mapsto f_{j,k}(x_k) = e^{-3\phi(x)/4} H_\alpha(x) w_j(x) = \left(e^{-\phi(x)/4} H_\alpha(x)\right)\left(e^{-\phi(x)/2}w_j(x)\right)\right] \in L^2(\R)\\
     \text{and}\  &\left[x_k\mapsto e^{-3\phi(x)/4} H_\alpha(x)  \left(\tfrac{\partial w_j}{\partial x_k}\right)(x) = \left(e^{-\phi(x)/4} H_\alpha(x)\right)\left(e^{-\phi(x)/2} \left(\tfrac{\partial w_j}{\partial x_k}\right)(x)\right)\right] \in L^2(\R).
\end{align*}
Moreover, we compute
\[
   \frac{\partial}{\partial x_k} \left(e^{-3\phi(x)/4} H_\alpha(x)\right) = -\frac{3}{4c} e_k^\top \Gamma x  e^{-3\phi(x)/4} H_\alpha(x) + e^{-3\phi(x)/4} \frac{\partial H_\alpha}{\partial x_k}(x) = e^{-3\phi(x)/4} p_{\alpha,k}(x),
\]
where $p_{\alpha,k}$ is some polynomial, whose degree depends on $\alpha$ and $k$. In any case, we have that $\big(x_k\mapsto e^{-\phi(x)/4} p_{\alpha,k}(x)\big) \in L^\infty(\R)$, thus
\[
    \frac{\partial f_{j,k}}{\partial x_k}(x_k) =   \left(e^{-\phi(x)/4} p_{\alpha,k}(x)\right)\left(e^{-\phi(x)/2}w_j(x)\right)  + e^{-3\phi(x)/4} H_\alpha(x) \left(\tfrac{\partial w_j}{\partial x_k}\right)(x) \in L^2(\R).
\]
Since $f_{j,k}, f_{j,k}'\in L^2(\R)$, it follows from Barb\u{a}lat's Lemma (see e.g.~\cite[Thm.~5]{FarkWegn16}) that $\lim_{x_k\to \pm\infty} f_{j,k}(x_k) = 0$. Since this is true for all $j,k\in\{1,\ldots,n\}$ it is easily seen that $e^{-3\phi/4} H_\alpha w \in L^\infty(\R^n)^n$ follows. Now we may observe that
\begin{align*}
    \int_{S_r} e^{-\phi(x)} H_\alpha(x) w(x) \cdot \vec{n}\, {\rm d}S &= \int_{S_r} e^{-r/4} e^{-3\phi(x)/4}H_\alpha(x) w(x) \cdot \vec{n}\, {\rm d}S\\
      &\le \sum_{k=1}^n M_k \int_{S_r} e^{-r/4} e_k \cdot \vec{n}\, {\rm d}S \le K e^{-r/4} r^{n-1},
\end{align*}
for some constants $M_1,\ldots,M_n,K>0$. This implies assertion~(ii).

\emph{Step 2}: We show that $H_\alpha\in\cD(A)$ and $A H_\alpha = \lambda_\alpha H_\alpha$ for all $\alpha \in (\N_0)^n$. First note that $H_\alpha \in V$ since, using the properties of the Hermite polynomials,
\begin{equation}\label{eq:nabla-Halpha}
  \nabla H_\alpha(x) = 2 \sum_{k=1}^n \left( \prod_{j\neq k} H_{\alpha_j}(u_j^\top x)\right) \alpha_k H_{\alpha_k-1}(u_k^\top x) u_k,\quad x\in\R^n.
\end{equation}
By definition of~$A$ the two assertions hold if, and only if, $a(H_\alpha,z) = \lambda_\alpha \langle H_\alpha, z\rangle_H$ for all $z\in V$. For $\alpha=(0,\ldots,0)$ this is clear since $\lambda_\alpha=0$ in this case, and $H_\alpha(x) = 1$, thus $a(H_\alpha,z) = 0$ for all $z\in V$. Now, fix $\alpha \in (\N_0)^n$ and $z\in V$, {and define the multi-index $\alpha^{-i}$ by
\[
    \alpha^{-i}_j := \begin{cases} \alpha_j, & j\neq i,\\ \alpha_i-1, & j=i,\end{cases}\quad j=1,\ldots,n.
\]
Then we have $\nabla H_\alpha(x) \stackrel{\eqref{eq:nabla-Halpha}}{=} 2 \sum_{k=1}^n \alpha_k H_{\alpha^{-k}}(x) u_k$ for all $x\in\R^n$ and
\begin{align*}
    & a(H_\alpha,z) = \int_{\R^n} e^{-\phi(x)} \big(\nabla H_\alpha(x)\big)^\top \big(\nabla  z(x)\big) {\rm d}x = \lim_{r\to\infty} \int_{S_r} e^{-\phi(x)} z(x) \nabla H_\alpha(x) \cdot \vec{n}\, {\rm d}S \\ &- \int_{\R^n} z(x) \div\left(e^{-\phi(x)} \nabla H_\alpha(x)\right)  {\rm d}x
      = 2 \sum_{k=1}^n \alpha_k \lim_{r\to\infty} \int_{S_r} e^{-\phi(x)}   H_{\alpha^{-k}}(x)  \big( z(x)u_k\big) \cdot \vec{n}\, {\rm d}S \\ &- \int_{\R^n} z(x) \div\left(e^{-\phi(x)} \nabla H_\alpha(x)\right)  {\rm d}x \stackrel{\rm Step~1}{=}
    - \int_{\R^n} z(x) \div\left(e^{-\phi(x)} \nabla H_\alpha(x)\right)  {\rm d}x
\end{align*}
}
and we compute
\begin{align*}
\div\left(e^{-\phi(x)} \nabla H_\alpha(x)\right)&= e^{-\phi(x)} \left( - \frac{1}{c} x^\top \Gamma \nabla H_\alpha(x) + \div\left(   \nabla H_\alpha(x)\right)\right) \\
&\stackrel{\eqref{eq:Gamma-uk}}{=} e^{-\phi(x)} \left(\div\left(\nabla H_\alpha(x)\right) -  2 \sum_{k=1}^n (u_k^\top x) u_k^\top \nabla H_\alpha(x)\right)
\end{align*}
and
\begin{align*}
  \div\left(\nabla H_\alpha(x)\right) &\stackrel{\eqref{eq:nabla-Halpha}}{=} 2 \sum_{\ell=1}^n \sum_{k=1}^n \alpha_k \left( \left( \prod_{j\neq k} H_{\alpha_j}(u_j^\top x)\right) H_{\alpha_k-1}'(u_k^\top x) u_{k,\ell}^2 \right.\\ &\quad \left.+ H_{\alpha_k-1}(u_k^\top x) u_{k,\ell} \sum_{m\neq k} \left( \prod_{j\not\in\{k,m\}} H_{\alpha_j}(u_j^\top x)\right) H_{\alpha_m}'(u_m^\top x) u_{m,\ell}\right)\\
  &= 2 \sum_{k=1}^n \alpha_k \left( \left( \prod_{j\neq k} H_{\alpha_j}(u_j^\top x)\right) H_{\alpha_k-1}'(u_k^\top x) \|u_k\|_{\R^n}^2\right.\\
   &\quad \left.+ H_{\alpha_k-1}(u_k^\top x) \sum_{m\neq k} \left( \prod_{j\not\in\{k,m\}} H_{\alpha_j}(u_j^\top x)\right) H_{\alpha_m}'(u_m^\top x) u_k^\top u_m\right) \\
  &= 2 \sum_{k=1}^n \alpha_k  \left( \prod_{j\neq k} H_{\alpha_j}(u_j^\top x)\right) H_{\alpha_k-1}'(u_k^\top x) \|u_k\|_{\R^n}^2,
\end{align*}
since $u_k^\top u_m = 0$ for $k\neq m$. Therefore, we obtain, using the properties of the Hermite polynomials {and that by definition of $u_k$ we have $\|u_k\|_{\R^n}^2 = \tfrac{\lambda_k}{2}$},
\begin{align*}
&\div\left(e^{-\phi(x)} \nabla H_\alpha(x)\right)\\
&= 2 e^{-\phi(x)} \sum_{k=1}^n \left( \prod_{j\neq k} H_{\alpha_j}(u_j^\top x)\right) \alpha_k \|u_k\|_{\R^n}^2 \left(  H_{\alpha_k-1}'(u_k^\top x)   - 2 (u_k^\top x) H_{\alpha_k-1}(u_k^\top x)\right) \\
&=  -2 e^{-\phi(x)} \sum_{k=1}^n \left( \prod_{j\neq k} H_{\alpha_j}(u_j^\top x)\right) \alpha_k \|u_k\|_{\R^n}^2 H_{\alpha_k}(u_k^\top x)\\
& = - e^{-\phi(x)} H_\alpha(x) \sum_{k=1}^n \alpha_k \lambda_k = -\lambda_\alpha  e^{-\phi(x)} H_\alpha(x),
\end{align*}
and hence, finally,
\[
    a(H_\alpha,z) = - \int_{\R^n} z(x) \div\left(e^{-\phi(x)} \nabla H_\alpha(x)\right)  {\rm d}x = \lambda_\alpha \langle H_\alpha, z\rangle_H.
\]

\emph{Step 3}: As shown in~\cite{Xu17} the products of Hermite polynomials constitute a complete orthogonal system in $L^2(\R^n;w)$ for $w(x) = e^{-\|x\|_{\R^n}^2}$. Since $\phi(x) = \sum_{k=1}^n (u_k^\top x)^2$ by~\eqref{eq:Gamma-uk} if follows (after defining new coordinates $y_k = u_k^\top x$) that $\setdef{H_\alpha}{\alpha \in (\N_0)^n}$ constitutes a complete orthogonal system in~$H$. This also implies that $\sigma(A) = \setdef{\lambda_\alpha}{ \alpha \in (\N_0)^n}$.

\emph{Step 4}: We show~(i). Observe that
\begin{align*}
    &\nabla \big(e^{-\phi(x)} H_\alpha(x)\big) = e^{-\phi(x)} \big( -\nabla\phi(x)  H_\alpha(x) + \nabla H_\alpha(x)\big) = e^{-\phi(x)} \big( \nabla H_\alpha(x) - H_\alpha(x) c^{-1} \Gamma x\big) \\
    &\stackrel{\eqref{eq:Gamma-uk},\eqref{eq:nabla-Halpha}}{=}\!  e^{-\phi(x)} \left(  \sum_{k=1}^n \left( \prod_{j\neq k} H_{\alpha_j}(u_j^\top x)\right) 2 \alpha_k H_{\alpha_k-1}(u_k^\top x) u_k - \prod_{j=1}^n H_{\alpha_j}(u_j^\top x) \left(\sum_{k=1}^n 2 u_k^\top x u_k\right)\!\! \right)\\
    &=  e^{-\phi(x)} \left(  \sum_{k=1}^n \left( \prod_{j\neq k} H_{\alpha_j}(u_j^\top x)\right) \big( H_{\alpha_k}'(u_k^\top x) - 2 u_k^\top x H_{\alpha_k}(u_k^\top x)  \big) u_k\right) \\
    &=  -e^{-\phi(x)} \sum_{k=1}^n \left( \prod_{j\neq k} H_{\alpha_j}(u_j^\top x)\right) H_{\alpha_k+1}(u_k^\top x) u_k,
\end{align*}
where we have used the properties of the Hermite polynomials.
\end{proof}

Now we turn to transform the operator~$A$ so that it becomes a suitable Fokker-Planck operator. To this end, define the spaces
\begin{align*}
  \fH &:= \setdef{e^{-\phi} f}{f\in H} = L^2(\R^n;e^{\phi}),\quad
  \fV := \setdef{e^{-\phi}f}{f\in V}
\end{align*}
and the bijection $h:H\to \fH,\ f\mapsto e^{-\phi} f$, together with the inner products
\begin{align*}
  \langle z_1, z_2 \rangle_\fH &:= \langle h^{-1}(z_1),h^{-1}(z_2) \rangle_H = \langle e^\phi z_1, e^\phi z_2 \rangle_H, && z_1, z_2\in\fH,\\
  \langle z_1, z_2 \rangle_\fV &:= \langle h^{-1}(z_1),h^{-1}(z_2) \rangle_V = \langle e^\phi z_1, e^\phi z_2 \rangle_H + \left\langle \nabla  (e^\phi z_1), \nabla (e^\phi z_2) \right\rangle_{H^n}, && z_1, z_2\in\fV.
\end{align*}
Further define the sesquilinear form
\begin{equation}\label{eq:form-fa}
    \fa:\fV\times\fV\to \R,\ (z_1,z_2)\mapsto a\left(h^{-1}(z_1),h^{-1}(z_2)\right) = \langle \nabla (e^\phi z_1), \nabla (e^\phi z_2) \rangle_{H^n},
\end{equation}
as well as $\cD(\fA) := h(\cD(A))$ and the operator
\[
    \fA := h \circ A \circ h^{-1} : \cD(\fA) \subset \fV \to \fH.
\]
Then we have that, for $v\in\cD(\fA)$ and $y\in\fH$,
\begin{align*}
  y = \fA v\quad &\iff\quad h^{-1}(y) = A h^{-1}(v)\quad \iff \quad \forall\, z\in V:\ a\big( h^{-1}(v), z\big) = \langle h^{-1}(y), z\rangle_H\\
  &\stackrel{w=h(z)}{\iff} \quad \forall\, w\in \fV:\ \fa\big(v,w\big) = \langle y, w\rangle_\fH.
\end{align*}
Furthermore, it is easy to see that $\fA$ is symmetric and that $\cD(\fA^*) = h(\cD(A^*)) = h(\cD(A))= \cD(\fA)$, thus $\fA$ is self-adjoint. From Proposition~\ref{Prop:eigenval-A} we immediately obtain the following result on the eigenvalues and eigenfunctions of~$\fA$.

\begin{proposition}\label{Prop:eigenval-frakA}
The operator~$\fA$ is self-adjoint and positive and satisfies
\begin{enumerate}
  \item $\sigma(\fA) = \sigma(A)$ and $z$ is an eigenfunction of~$\fA$ if, and only if, $e^{\phi} z$ is an eigenfunction of~$A$,
  \item for $z_\alpha := e^{-\phi} H_\alpha$ the set $\setdef{ z_\alpha}{\alpha \in (\N_0)^n}$ constitutes a complete orthogonal system of eigenfunctions in~$\fH$ with $\fA z_\alpha = \lambda_\alpha z_\alpha$,
  \item $\lim_{r\to \infty} \int_{S_r} e^{\phi(x)} z_\alpha(x) w(x) \cdot \vec{n}\, {\rm d}S = 0$ for all $w\in \fV^n$ and $\alpha \in (\N_0)^n$.
\end{enumerate}
\end{proposition}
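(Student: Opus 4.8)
The plan is to exploit the fact that the map $h:H\to\fH$, $f\mapsto e^{-\phi}f$, is by construction an isometric isomorphism: the inner products on $\fH$ and $\fV$ were defined precisely so that $\langle h(f_1),h(f_2)\rangle_\fH=\langle f_1,f_2\rangle_H$, and analogously on $V$ and $\fV$. Since moreover $\fA=h\circ A\circ h^{-1}$, the operator $\fA$ is unitarily equivalent to $A$, and essentially every assertion of the proposition is the image under $h$ of the corresponding assertion of Proposition~\ref{Prop:eigenval-A}. Self-adjointness of $\fA$ has already been recorded in the discussion preceding the statement; positivity follows either from the unitary equivalence with the positive operator $A$, or directly from $\fa(z,z)=a\big(h^{-1}(z),h^{-1}(z)\big)=\|\nabla(e^\phi z)\|_{H^n}^2\ge 0$.

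For~(i) I would argue that, for $\lambda\in\R$, one has $\lambda I-\fA=h(\lambda I-A)h^{-1}$, so that $\lambda I-\fA$ is boundedly invertible if and only if $\lambda I-A$ is; hence $\sigma(\fA)=\sigma(A)$. The eigenfunction correspondence is immediate from the chain $\fA z=\lambda z\iff A\big(h^{-1}(z)\big)=\lambda\,h^{-1}(z)$ together with $h^{-1}(z)=e^\phi z$. For~(ii) the key observation is that $h(H_\alpha)=e^{-\phi}H_\alpha=z_\alpha$. Since $\setdef{H_\alpha}{\alpha\in(\N_0)^n}$ is a complete orthogonal system of eigenfunctions of $A$ in $H$ by Proposition~\ref{Prop:eigenval-A}, and $h$ is unitary, the image $\setdef{z_\alpha}{\alpha\in(\N_0)^n}$ is a complete orthogonal system in $\fH$: orthogonality follows from $\langle z_\alpha,z_\beta\rangle_\fH=\langle H_\alpha,H_\beta\rangle_H$, completeness is preserved because $h$ is onto and isometric, and $\fA z_\alpha=\lambda_\alpha z_\alpha$ is then a special case of~(i) using $AH_\alpha=\lambda_\alpha H_\alpha$.

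Finally, for~(iii) I would simply unwind the exponential weights. Observe that $e^{\phi}z_\alpha=H_\alpha$, and that any $w\in\fV^n$ can be written as $w=e^{-\phi}\tilde w$ with $\tilde w:=e^{\phi}w\in V^n$; substituting gives
\[
\int_{S_r} e^{\phi(x)}z_\alpha(x)\,w(x)\cdot\vec{n}\,{\rm d}S
=\int_{S_r} e^{-\phi(x)}H_\alpha(x)\,\tilde w(x)\cdot\vec{n}\,{\rm d}S,
\]
so the claim reduces verbatim to Proposition~\ref{Prop:eigenval-A}(ii) applied to $\tilde w$.

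I do not expect a genuine obstacle here: the whole difficulty was already absorbed into the explicit spectral analysis of $A$ and the nontrivial boundary-term estimate of Proposition~\ref{Prop:eigenval-A}(ii). The only points requiring care are bookkeeping ones, namely confirming that the definitions of $\langle\cdot,\cdot\rangle_\fH$ and $\langle\cdot,\cdot\rangle_\fV$ indeed render $h$ \emph{unitary} rather than merely bounded, and checking in~(iii) that the factors $e^{\pm\phi}$ cancel exactly so that the integral lands on the already-proved weighted statement.
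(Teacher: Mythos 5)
Your proposal is correct and follows essentially the same route as the paper: the paper likewise treats $\fA = h\circ A\circ h^{-1}$ as unitarily equivalent to $A$ (the inner products on $\fH$ and $\fV$ being defined precisely so that $h$ is unitary), records self-adjointness beforehand via $\cD(\fA^*)=h(\cD(A^*))=\cD(\fA)$, and then states the proposition as an immediate consequence of Proposition~\ref{Prop:eigenval-A} without further argument. Your explicit weight-cancellation in~(iii), substituting $\tilde w = e^{\phi}w\in V^n$ so that the boundary integral reduces verbatim to Proposition~\ref{Prop:eigenval-A}\,(ii), is exactly the bookkeeping the paper leaves implicit.
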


Attention now turns to the operator~$-c\fA$, which will serve as the Fokker-Planck operator. In view of the right-hand side in~\eqref{eq:FPE-OU}, this is justified by the following property.

\begin{lemma}\label{Lem:fA-FPO}
Let $z\in \fV$ be such that $\nabla (e^{\phi} z)\in V^n$. Then we have that
\[
   \fA z = -\div \left(e^{-\phi} \nabla \left(e^{\phi} z\right)\right) = -\div \left( \nabla z + z \nabla \phi\right).
\]
\end{lemma}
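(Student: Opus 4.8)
The plan is to read the asserted identity as two statements---that $z\in\cD(\fA)$ and that $\fA z=-\div(e^{-\phi}\nabla(e^{\phi}z))$---and to obtain both at once from the variational description of $\fA$ recorded just before the lemma. Concretely, it suffices to exhibit an element $y\in\fH$ with $\fa(z,w)=\langle y,w\rangle_{\fH}$ for all $w\in\fV$; the definition of $\cD(\fA)=h(\cD(A))$ then yields $z\in\cD(\fA)$ and $\fA z=y$ simultaneously. The obvious candidate is $y:=-\div(e^{-\phi}\nabla(e^{\phi}z))$, and the \emph{second} equality in the lemma is immediate: with $f:=e^{\phi}z$ the product rule gives $e^{-\phi}\nabla f=\nabla z+z\nabla\phi$, so that $-\div(e^{-\phi}\nabla(e^{\phi}z))=-\div(\nabla z+z\nabla\phi)$ with no analytic content.

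First I would confirm $y\in\fH$. Writing $\div(e^{-\phi}\nabla f)=e^{-\phi}(\Delta f-\nabla\phi\cdot\nabla f)$ one has $\|y\|_{\fH}^{2}=\int_{\R^{n}}e^{-\phi}(\Delta f-\nabla\phi\cdot\nabla f)^{2}\,{\rm d}x$. The term $\Delta f$ lies in $H$ because $\nabla f\in V^{n}$ forces every second derivative $\partial_i\partial_j f\in L^2(\R^n;e^{-\phi})$; the only delicate contribution is $\nabla\phi\cdot\nabla f$, with $\nabla\phi=c^{-1}\Gamma x$ of linear growth. Here I would invoke an auxiliary Gaussian Hardy-type inequality, namely that multiplication by $x_{j}$ maps $V$ into $H$, i.e.\ $\int_{\R^{n}}\|x\|_{\R^n}^{2}v^{2}e^{-\phi}\,{\rm d}x\le C\|v\|_{V}^{2}$ for all $v\in V$, which is proved by integrating $\partial_{k}(v^{2}e^{-\phi})$ and estimating with Cauchy--Schwarz and Young's inequality. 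Applied to the components of $\nabla f$ it controls $\nabla\phi\cdot\nabla f$ in $H$ and gives $y\in\fH$.

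Next I would verify the form identity. For $w\in\fV$ put $\tilde w:=e^{\phi}w\in V$, so that by definition of $\fa$ we have $\fa(z,w)=\int_{\R^{n}}e^{-\phi}\nabla f\cdot\nabla\tilde w\,{\rm d}x$. Integrating by parts over the sublevel sets $B_{r}=\{\phi\le r\}$ with boundary $S_{r}$ yields
\[
\int_{B_{r}}e^{-\phi}\nabla f\cdot\nabla\tilde w\,{\rm d}x=\int_{S_{r}}\tilde w\,\big(e^{-\phi}\nabla f\big)\cdot\vec n\,{\rm d}S-\int_{B_{r}}\tilde w\,\div\big(e^{-\phi}\nabla f\big)\,{\rm d}x .
\]
As $r\to\infty$ the left-hand side converges to $\fa(z,w)$ and $\int_{B_{r}}\tilde w\,\div(e^{-\phi}\nabla f)\,{\rm d}x$ converges to $-\langle y,w\rangle_{\fH}$ (both integrands lie in $L^{1}$ by Cauchy--Schwarz, using $\nabla f,\nabla\tilde w\in H^{n}$ and $y,w\in\fH$). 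Consequently the desired identity $\fa(z,w)=\langle y,w\rangle_{\fH}$ is equivalent to the vanishing of the surface integral in the limit.

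The hard part will be precisely this vanishing of the boundary term $\int_{S_{r}}F\cdot\vec n\,{\rm d}S$, where $F:=e^{-\phi}\tilde w\,\nabla f$. It is a genuine strengthening of Step~1 in the proof of Proposition~\ref{Prop:eigenval-A}: there one factor was a polynomial $H_{\alpha}$, so that $e^{-\phi/4}H_{\alpha}$ was bounded and a Barb\u{a}lat argument placed the product in $L^{\infty}$; here \emph{both} $\tilde w$ and the components of $\nabla f$ are only weighted-Sobolev functions, and no such $L^{\infty}$ bound is available. I would instead exploit the integrability already secured: by the Hardy inequality $\int_{\R^{n}}\|F\|_{\R^n}\,\|\nabla\phi\|_{\R^n}\,{\rm d}x\le C\big(\int_{\R^n}\|x\|_{\R^n}^{2}\tilde w^{2}e^{-\phi}{\rm d}x\big)^{1/2}\|\nabla f\|_{H^n}<\infty$, so the co-area formula gives $\int_{0}^{\infty}\big(\int_{S_{r}}\|F\|_{\R^n}\,{\rm d}S\big)\,{\rm d}r<\infty$ and hence $\int_{S_{r}}\|F\|_{\R^n}\,{\rm d}S\to0$ along a sequence $r_{j}\to\infty$. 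Evaluating the integration-by-parts identity along $r_{j}$, and using that the other two terms have full limits, then yields $\fa(z,w)=\langle y,w\rangle_{\fH}$. I expect this boundary estimate---and the weighted embedding underpinning it---to be the only point requiring real care, the two displayed equalities themselves being routine.
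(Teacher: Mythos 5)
Your proof is correct, but it takes a genuinely different route from the paper's. The paper never tests the form identity against all $w\in\fV$: it pairs $\fA z$ only with the eigenfunctions $z_\alpha=e^{-\phi}H_\alpha$, integrates by parts once, and kills the boundary term by citing Proposition~\ref{Prop:eigenval-A}\,(ii) with the vector field $\nabla(e^{\phi}z)\in V^n$ --- exactly the situation that proposition was built for, since its Barb\u{a}lat-type argument exploits the polynomial factor $H_\alpha$ to get an $L^\infty$ bound. The identity $\fA z=-\div\left(e^{-\phi}\nabla\left(e^{\phi}z\right)\right)$ then follows from completeness of the orthogonal system $\setdef{z_\alpha}{\alpha\in(\N_0)^n}$ in $\fH$. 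You instead verify $\fa(z,w)=\langle y,w\rangle_\fH$ for \emph{every} $w\in\fV$, which forces you to replace the unavailable $L^\infty$ boundary estimate (no polynomial factor is present) by the co-area/subsequence argument, and to check $y\in\fH$ explicitly via the Gaussian Hardy-type embedding. Both ingredients are sound: the bound $\int_{\R^n}\|x\|_{\R^n}^2v^2e^{-\phi}\,{\rm d}x\le C\|v\|_V^2$ follows from the integration-by-parts-plus-Young argument you sketch (with a cutoff/density step, $C_c^\infty(\R^n)$ being dense in $V$ for this weight), and since the two volume terms in your Green identity on $B_r$ have full limits as $r\to\infty$, the boundary term converges, and your subsequence $r_j$ identifies that limit as zero. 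As for what each approach buys: the paper's argument is shorter because it recycles Proposition~\ref{Prop:eigenval-A}\,(ii) and needs no new analytic estimates, but as written it implicitly presumes $z\in\cD(\fA)$ (so that $\fA z\in\fH$ makes sense) and glosses over the membership $y\in\fH$ in the final pairing; your version is more self-contained and strictly stronger on precisely these points, since it \emph{derives} $z\in\cD(\fA)$ together with $\fA z=y$ from the variational characterization, at the modest cost of the auxiliary weighted embedding.
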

\begin{proof} Let $(z_\alpha)_{\alpha\in(\N_0)^n}$ be the eigenfunctions of~$\fA$ from Proposition~\ref{Prop:eigenval-frakA}. We calculate that for any $\alpha\in(\N_0)^n$
\begin{align*}
  &\langle  \fA z, z_\alpha\rangle_\fH = \fa\big(z,z_\alpha\big) = \int_{\R^n} e^{-\phi(x)} \nabla \left(e^{\phi(x)} z(x)\right)^\top \nabla \left(e^{\phi(x)} z_\alpha(x)\right) {\rm d}x \notag \\
  &=\lim_{r\to \infty} \int_{S_r} e^{-\phi(x)} H_\alpha(x) \nabla \left(e^{\phi(x)} z(x)\right) \cdot \vec{n}\, {\rm d}S -\! \int_{\R^n} e^{\phi(x)} \div \left(e^{-\phi(x)} \nabla \left(e^{\phi(x)} z(x)\right)\right) z_\alpha(x) {\rm d}x \notag \\
  &= \left\langle -\div \left(e^{-\phi(x)} \nabla \left(e^{\phi(x)} z(x)\right)\right), z_\alpha\right\rangle_\fH, \label{eq:repr-fA}
\end{align*}
where the last equality follows from the assumption  $\nabla (e^{\phi} z)\in V^n$ and
Proposition~\ref{Prop:eigenval-A}\,(ii). Since the above equality is true for all $\alpha\in(\N_0)^n$, we have proved the first equality in the statement. The second is a straightforward calculation.
\end{proof}

Recall that $c \nabla \phi(x) = \Gamma x$ for all $x\in\R^n$. Therefore, with the operator
\begin{equation}\label{eq:def-opB}
    \fB : \fH \times\R^n \to \fH_{-1},\ (v,u) \mapsto -\div \big(v \cdot g(u)\big),
\end{equation}
for which it is clear that $\fB(\cdot, u)\in \cB(\fH;\fH_{-1})$ for all $u\in\R^n$, the Fokker-Planck equation~\eqref{eq:FPE-OU} can be rewritten as
\begin{equation}\label{eq:FPE-OU-fA}
\begin{aligned}
 \dot p(t,x) &= -c\fA p(t,x) + \fB\big(p(t,\cdot),u(t)\big)(x), &&\text{in}\ (0,\infty)\times \R^n,\\
  p(0,x) &= p_0(x), &&\text{in}\ \R^n,
\end{aligned}
\end{equation}
with state space~$\fH$. Note that the space~$\fH_{-1}$ is defined with respect to the Fokker-Planck operator  $-c\fA$. System~\eqref{eq:FPE-OU-fA} fits into the framework of bilinear control systems as considered for the Fokker-Planck equation e.g.\ in~\cite{BreiKuni18,HosfJaco20}. Although it has been considered only on a bounded spatial domain in the aforementioned works, the results for general bilinear systems from~\cite{HosfJaco20} may still be used to infer the existence of a unique mild solution to the open-loop problem. This will be one ingredient in our analysis of the closed-loop system under funnel control, see Section~\ref{Sec:FunCon}.


\section{Mild solutions and their properties}\label{Sec:SolProp}

In this section we introduce the notion of mild solutions of the Fokker-Planck equation~\eqref{eq:FPE-OU-fA}, where we closely follow the framework for bilinear systems introduced in~\cite{HosfJaco20}. We show admissibility of the involved control operators and derive a set of properties that each solution exhibits, including a covariance matrix independent of the control input and properties~\eqref{eq:FPE-cond}.

First, we introduce
\begin{equation}\label{eq:B1B2F}
  \fB_1 : \fH^n \to \fH_{-1},\ v\mapsto - \div v,\quad \fB_2 = I_{\fH},\quad
  F: \fH \times \R^n \to \fH^n,\ (v,u)\mapsto v \cdot g(u) ,
\end{equation}
where we recall that $g\in C^1(\R^n;\R^n)$ is such that~\eqref{eq:HGP} is satisfied and $\fH_{-1}$ is defined w.r.t.\ $-c\fA$. Further let $d\in L^\infty(0,\infty;\fH)$ be a disturbance that has ``zero mass'' in the sense
\begin{equation}\label{eq:cond-dist}
    \int_{\R^n} d(t,x) {\rm d}x = 0\quad \text{for almost all $t\ge0$.}
\end{equation}
We may observe that the above condition is equivalent to $\langle d(t), e^{-\phi}\rangle_\fH = 0$, i.e., the disturbance is restricted to the orthogonal complement of the eigenfunction corresponding to the zero eigenvalue of~$\fA$. Thus, it influences only the exponentially stable part of the Fokker-Planck operator. We introduce~$d$ as an additive {and unknown} disturbance in the Fokker-Planck equation~\eqref{eq:FPE-OU-fA}, which may be restated as, omitting the argument~$x$,
\begin{equation}\label{eq:FPE-OU-dist}
 \dot p(t) = -c\fA p(t) + \fB_1 F\big(p(t),u(t)\big) + \fB_2 d(t),\quad p(0) = p_0.
\end{equation}
{Since a model does typically not exactly describe a real-world process, the disturbance can, for instance, be understood as the uncertainty which distinguishes the ideal model (i.e., with $d=0$) from the process at hand.} Note that, in the presence of disturbances, it cannot be expected that the solution~$p(t)$ is a probability density function for~$t\ge 0$ in general, i.e., conditions~\eqref{eq:FPE-cond} will typically not hold.

Before defining the mild solution we recall from Lemma~\ref{Lem:A-generator} that $-A$ generates an analytic contraction semigroup on~$H$. Therefore, also the Fokker-Planck operator $-c\fA$ generates an analytic contraction semigroup on~$\fH$ denoted by $(T(t))_{t\ge 0}$ in the following.

\begin{definition}\label{Def:solution}
Consider the system~\eqref{eq:FPE-OU-dist} with $c>0$, $\Gamma = \Gamma^\top>0$, $g\in C^1(\R^n;\R^n)$ with~\eqref{eq:HGP} and $\phi$ as defined in~\eqref{eq:phi}. Recall the spaces~$\fH$ and~$\fV$ from Section~\ref{Sec:FPO} and let~$p_0\in \fH$, $t_1>0$ and $u\in C(\R_{\ge 0};\R^n)$, $d\in L^\infty(0,\infty;\fH)$. A function~$p\in C([0,t_1]; \fH)$ is called \emph{mild solution} of~\eqref{eq:FPE-OU-dist} on $[0,t_1]$, if
\begin{equation}\label{eq:mild_solution}
    p(t) = T(t)p_0 + \int_0^t T_{-1}(t-s) \big(\fB_1 F\big(p(s),u(s)\big) + \fB_2 d(s)\big) {\rm d}s
\end{equation}
for all $t\in [0,t_1]$. A function~$p$ is called mild solution of~\eqref{eq:FPE-OU-dist} on $\R_{\ge 0}$, if $p|_{[0,t_1]}$ is a solution of~\eqref{eq:FPE-OU-dist} on $[0,t_1]$ for all $t_1>0$.
\end{definition}

We note that, while the function in~\eqref{eq:mild_solution} clearly satisfies $p(t) \in\fH_{-1}$ for $t\ge 0$, $p\in C([0,t_1]; \fH)$ is an additional condition. To achieve this property the concept of admissibility is used, see the Nomenclature. In the following we show that the control operator of the bilinear system~\eqref{eq:FPE-OU-dist} is admissible, where we follow the ideas given in~\cite[Sec.~3]{HosfJaco20}, tailored to the present framework. To this end, with respect to the Fokker-Planck operator $-c\fA$, which is self-adjoint and negative by Proposition~\ref{Prop:eigenval-frakA}, we introduce the space $\fH_{\frac12}$ as the completion of $\cD(\fA)$ with respect to the norm
\[
    \|v\|_{\fH_{\frac12}}^2  = \langle (I+c\fA)v, v\rangle_\fH,\quad v\in\cD(\fA).
\]
Furthermore, the space $\fH_{-\frac12}$ is defined as the completion of~$\fH$ with respect to the norm
\[
    \|v\|_{\fH_{-\frac12}}  = \sup_{\|w\|_{\fH_{\frac12}}\le 1} |\langle v,w\rangle_\fH|,\quad v\in \fH.
\]
It is easy to see that, for all $v\in\fH_{\frac12}$,
\begin{equation}\label{eq:norm-H12}
  \|v\|_{\fH_{\frac12}}^2 = \|v\|_\fH^2 + c \fa(v,v) = \|v\|_\fH^2 + c \|e^{-\phi} \nabla (e^{\phi} v)\|_{\fH^n}^2,
\end{equation}
and since $\|v\|_\fV^2 = \|v\|_\fH^2 + \fa(v,v)$, we have $\fH_{\frac12} = \fV$ with different, but equivalent, norms.

\begin{lemma}\label{Lem:admiss}
We have that $\fB_1 \in \cB(\fH^n; \fH_{-\frac12})$ and~$\fB_1$ is $L^2$-admissible for $(T(t))_{t\ge 0}$.
\end{lemma}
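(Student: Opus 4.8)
The plan is to prove boundedness $\fB_1\in\cB(\fH^n;\fH_{-\frac12})$ first, and then deduce $L^2$-admissibility from the fact that $-c\fA$ generates an analytic contraction semigroup together with the standard result that, for analytic semigroups, control operators landing in the ``half-space'' $\fH_{-\frac12}$ are automatically $L^2$-admissible. For the boundedness, I would work directly from the definition of the $\fH_{-\frac12}$-norm as a dual norm: for $v\in\fH^n$ I must estimate
\[
  \|\fB_1 v\|_{\fH_{-\frac12}} = \sup_{\|w\|_{\fH_{\frac12}}\le 1} |\langle \fB_1 v, w\rangle_\fH|
  = \sup_{\|w\|_{\fH_{\frac12}}\le 1} |\langle -\div v, w\rangle_\fH|.
\]
The essential computation is an ``integration by parts'' that moves the divergence off $v$ and onto $w$. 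Here one uses that $\langle\cdot,\cdot\rangle_\fH = \langle e^\phi\,\cdot\,,e^\phi\,\cdot\,\rangle_H$, so that
\[
  \langle -\div v, w\rangle_\fH = -\int_{\R^n} e^{\phi(x)}\,\div v(x)\; e^{\phi(x)} w(x)\,{\rm d}x,
\]
and the goal is to rewrite this, after integrating by parts, as $\langle e^{-\phi}v, e^{-\phi}\nabla(e^{\phi}w)\rangle_{\fH^n}$ up to a vanishing boundary term.

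**The boundary term and the key estimate.** The main obstacle will be justifying that the integration-by-parts boundary term vanishes and that the resulting expression is dominated by $\|v\|_{\fH^n}\|w\|_{\fH_{\frac12}}$. For the boundary term I would appeal to the spectral/decay apparatus already established: the surface-integral vanishing property in Proposition~\ref{Prop:eigenval-A}\,(ii), respectively Proposition~\ref{Prop:eigenval-frakA}\,(iii), is exactly the ``integration by parts'' formula advertised in the introduction, and it is what makes the boundary term at $S_r$ disappear as $r\to\infty$. Since that property is stated for eigenfunctions, I would either work with $w=z_\alpha$ first and extend by density of the span of $\{z_\alpha\}$ in $\fH_{\frac12}=\fV$, or verify the boundary decay directly for $w\in\fV$ with $\nabla(e^\phi w)\in V^n$ using the same Barb\u{a}lat-type argument as in Step~1 of Proposition~\ref{Prop:eigenval-A}. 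Once the boundary term is gone, Cauchy--Schwarz gives
\[
  |\langle -\div v, w\rangle_\fH| = \left|\langle e^{-\phi}v,\, e^{-\phi}\nabla(e^{\phi}w)\rangle_{\fH^n}\right|
  \le \|v\|_{\fH^n}\,\big\|e^{-\phi}\nabla(e^{\phi}w)\big\|_{\fH^n},
\]
and by the identity~\eqref{eq:norm-H12} the last factor is $c^{-1/2}$ times $\big(\|w\|_{\fH_{\frac12}}^2-\|w\|_\fH^2\big)^{1/2}\le c^{-1/2}\|w\|_{\fH_{\frac12}}$. Taking the supremum over $\|w\|_{\fH_{\frac12}}\le 1$ yields $\|\fB_1 v\|_{\fH_{-\frac12}}\le c^{-1/2}\|v\|_{\fH^n}$, which establishes $\fB_1\in\cB(\fH^n;\fH_{-\frac12})$.

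**From boundedness to admissibility.** For the admissibility claim I would invoke the general principle for analytic semigroups: by Lemma~\ref{Lem:A-generator} (transported to $\fH$ via $h$) the semigroup $(T(t))_{t\ge0}$ generated by $-c\fA$ is analytic and contractive, and $-c\fA$ is self-adjoint and negative with $\fH_{\frac12}$ and $\fH_{-\frac12}$ its natural interpolation/extrapolation spaces of order $\tfrac12$. For such semigroups every $B\in\cB(U;\fH_{-\frac12})$ is $L^2$-admissible; this is a classical maximal-regularity/interpolation fact (see e.g.~\cite[Sec.~5]{TucsWeis09} or the theory underlying~\cite[Sec.~3]{HosfJaco20}), resting on the estimate $\int_0^\infty \|\,(c\fA)^{1/2}T(t)x\,\|_\fH^2\,{\rm d}t\le C\|x\|_\fH^2$ valid for analytic semigroups on Hilbert spaces, which by duality gives $\int_0^t \|T_{-1}(t-s)B u(s)\|_\fH\,{\rm d}s$-type bounds controlling the admissibility integral. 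I would state this as the decisive reduction: since $\fB_1$ maps into $\fH_{-\frac12}$ and the semigroup is analytic, admissibility is immediate, with the boundedness estimate above supplying the only non-routine ingredient.
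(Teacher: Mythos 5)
Your proposal is correct and takes essentially the same route as the paper: the paper likewise first reduces $L^2$-admissibility to $\fB_1\in\cB(\fH^n;\fH_{-\frac12})$ by combining \cite[Thms.~4.4.3~\&~5.1.3]{TucsWeis09}, and then proves boundedness by the same weighted integration by parts --- boundary term killed via Proposition~\ref{Prop:eigenval-A}\,(ii), Cauchy--Schwarz, and the norm identity~\eqref{eq:norm-H12} --- arriving at the identical bound $\|\fB_1 v\|_{\fH_{-\frac12}}\le c^{-1/2}\|v\|_{\fH^n}$ (the paper pairs with the $\fH_{\frac12}$-normalized eigenfunctions $e^{-\phi}\tilde H_\alpha$ and uses density of $\fV^n$ in $\fH^n$, matching your eigenfunction-plus-density variant). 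One small caution: your blanket principle that analyticity alone makes every operator in $\cB(U;\fH_{-\frac12})$ $L^2$-admissible is false for general analytic semigroups (it is a Weiss-conjecture-type property that can fail without square function estimates), but it is valid here precisely because $-c\fA$ is self-adjoint and generates an analytic contraction semigroup, which is exactly the hypothesis the cited Tucsnak--Weiss results rest on, so your argument goes through in this setting.
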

\begin{proof} Combining~\cite[Thms.~4.4.3~\&~5.1.3]{TucsWeis09} it follows that $\fB_1$ is $L^2$-admissible for $(T(t))_{t\ge 0}$, if $\fB_1\in \cB(\fH^n,\fH_{-\frac12})$. To show the latter, let $w\in\fV^n$, $\alpha\in (\N_0)^n$ and, invoking $e^{-\phi} \nabla H_\alpha \in \fH^n$, $\tilde H_\alpha := H_\alpha / \|e^{-\phi} H_\alpha\|_{\fH_{\frac12}}$, then
\begin{align*}
    |\langle \fB_1 w, e^{-\phi} \tilde H_\alpha\rangle_\fH| &= \left| \int_{\R^n} \tilde H_{\alpha}(x) \div w(x) {\rm d}x\right| = \left| \int_{\R^n} w(x)^\top \nabla \tilde H_{\alpha}(x) {\rm d}x\right| \\
    &\le \|w\|_{\fH^n} \|e^{-\phi} \nabla \tilde H_\alpha\|_{\fH^n} \stackrel{\eqref{eq:norm-H12}}{\le} \frac{1}{\sqrt{c}} \|w\|_{\fH^n} \|e^{-\phi} \tilde H_\alpha\|_{\fH_{\frac12}} = \frac{1}{\sqrt{c}} \|w\|_{\fH^n},
\end{align*}
where we have used Proposition~\ref{Prop:eigenval-A}\,(ii). Therefore, we find that
\begin{align*}
  \|\fB_1 w\|_{\fH_{-\frac12}}  &= \sup_{\|v\|_{\fH_{\frac12}}\le 1} |\langle \fB_1 w,v\rangle_\fH| = \sup_{\alpha\in (\N_0)^n} |\langle \fB_1 w, e^{-\phi} \tilde H_\alpha\rangle_\fH| \le \frac{1}{\sqrt{c}} \|w\|_{\fH^n},
\end{align*}
and since $\fV^n$ is dense in $\fH^n$ it follows that $\fB_1\in \cB(\fH^n,\fH_{-\frac12})$.
\end{proof}

Next we show that any mild solution of~\eqref{eq:FPE-OU-dist} satisfies the equation in the weak sense and exhibits a certain smoothness. First recall that $\fH_{-{\frac12}}$ is the dual of $\fH_{\frac12}$ with respect to the pivot space $\fH$, thus, invoking that $\fA$ is self-adjoint and using an appropriate identification via the Riesz representation theorem, we have
\[
    \langle w, v\rangle_{\fH_{-{\frac12}}\times \fH_{\frac12}} = \langle w,v\rangle_\fH,\quad w\in\fH_{-{\frac12}},\ v\in\fH_{\frac12},
\]
i.e., the duality pairing is compatible with the inner product in~$\fH$, cf.\ also~\cite[Sec.~3.6]{Staf05} and~\cite[Sec.~2.9]{TucsWeis09}.

\begin{lemma}\label{Lem:sln-weak}
Use the assumptions from Definition~\ref{Def:solution} and let~$p$ be a mild solution of~\eqref{eq:FPE-OU-dist} on $[0,t_1]$. Then $p\in L^q(0,t_1;\fV) \cap W^{1,q}(0,t_1;\fH_{-{\frac12}})$ for all $1\le q<2$ and for all $v\in\fV$ and almost all $t\in[0,t_1]$ we have
\begin{equation}\label{eq:weak-repr}
    \langle \dot p(t), v\rangle_\fH = -c\langle p(t), \fA v\rangle_\fH + \langle \fB_1 F\big(p(t),u(t)\big) + \fB_2 d(t), v\rangle_\fH.
\end{equation}
If additionally $p_0 \in \fV$, then $p\in L^q(0,t_1;\fV) \cap W^{1,q}(0,t_1;\fH_{-{\frac12}})$ for all $1\le q<\infty$.
\end{lemma}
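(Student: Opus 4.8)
The plan is to exploit the decomposition of the mild solution~\eqref{eq:mild_solution} into the free evolution and the two convolution terms,
\[
    p(t) = T(t)p_0 + \underbrace{\int_0^t T_{-1}(t-s)\fB_1 F\big(p(s),u(s)\big)\,{\rm d}s}_{=:\,p_1(t)} + \underbrace{\int_0^t T(t-s) d(s)\,{\rm d}s}_{=:\,p_2(t)},
\]
and to treat each summand separately, using throughout that $-c\fA$ generates an \emph{analytic} contraction semigroup (Lemma~\ref{Lem:A-generator}), that $\fH_{\frac12}=\fV$ with equivalent norms, and that $\fA$ is diagonalized by the orthogonal system $(z_\alpha)_{\alpha\in(\N_0)^n}$ of Proposition~\ref{Prop:eigenval-frakA}. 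Writing $\mu_\alpha := c\lambda_\alpha$ for the eigenvalues of $c\fA$ and $e_\alpha := z_\alpha/\|z_\alpha\|_\fH$ for the $\fH$-orthonormal eigenbasis, the norms on $\fH_{\pm\frac12}$ read $\|v\|_{\fH_{\frac12}}^2=\sum_\alpha(1+\mu_\alpha)|\langle v,e_\alpha\rangle|^2$ and $\|v\|_{\fH_{-\frac12}}^2=\sum_\alpha(1+\mu_\alpha)^{-1}|\langle v,e_\alpha\rangle|^2$, and $T(t)$ acts as multiplication by $e^{-\mu_\alpha t}$. First I would record that the data are bounded in time: since $p\in C([0,t_1];\fH)$, $u\in C(\R_{\ge0};\R^n)$ and $g$ is linearly bounded by~\eqref{eq:HGP}, the map $s\mapsto F(p(s),u(s))=p(s)\,g(u(s))$ lies in $L^\infty(0,t_1;\fH^n)$, whence $\fB_1 F(p(\cdot),u(\cdot))\in L^\infty(0,t_1;\fH_{-\frac12})$ by Lemma~\ref{Lem:admiss}, while $\fB_2 d=d\in L^\infty(0,t_1;\fH)\subseteq L^\infty(0,t_1;\fH_{-\frac12})$.

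For the $\fV$-regularity, the free and disturbance terms are controlled by the analytic smoothing estimate $\|T(t)\|_{\cB(\fH,\fH_{\frac12})}\le C t^{-1/2}$ on $(0,t_1]$, which follows from $\sup_{\mu\ge0}(1+\mu)^{1/2}e^{-\mu t}\le C t^{-1/2}$. This yields $\|T(t)p_0\|_{\fH_{\frac12}}\le C t^{-1/2}\|p_0\|_\fH$, and since $\int_0^{t_1}t^{-q/2}\,{\rm d}t<\infty$ exactly when $q<2$, the free term lies in $L^q(0,t_1;\fV)$ for all $1\le q<2$; this single term is the origin of the threshold $q<2$, and it is promoted to $L^\infty(0,t_1;\fV)$ when $p_0\in\fV=\fH_{\frac12}$, because then $\|T(t)p_0\|_{\fH_{\frac12}}\le\|p_0\|_{\fH_{\frac12}}$. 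For $p_2$ the same estimate gives $\|p_2(t)\|_{\fH_{\frac12}}\le C\|d\|_{L^\infty(\fH)}\int_0^t(t-s)^{-1/2}\,{\rm d}s$, so $p_2\in L^\infty(0,t_1;\fV)$.

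The delicate term is $p_1$, and this is where I expect the main obstacle: the operator norm $\|T_{-1}(\tau)\|_{\cB(\fH_{-\frac12},\fH_{\frac12})}$ behaves like $\tau^{-1}$, which is \emph{not} time-integrable, so no pointwise-in-$t$ bound on $\|p_1(t)\|_{\fH_{\frac12}}$ is available. I would instead argue in the eigenbasis and exploit that all nonzero eigenvalues are bounded below by $\mu_{\min}:=c\min_k\lambda_k>0$. Setting $b_\alpha(s):=\langle\fB_1 F(p(s),u(s)),e_\alpha\rangle_{\fH_{-\frac12}\times\fH_{\frac12}}$, Lemma~\ref{Lem:admiss} gives $\sum_\alpha(1+\mu_\alpha)^{-1}|b_\alpha(s)|^2=\|\fB_1 F(p(s),u(s))\|_{\fH_{-\frac12}}^2\le c^{-1}\|F(p(s),u(s))\|_{\fH^n}^2$, and the $\alpha$-component of $p_1$ is the convolution $\langle p_1(t),e_\alpha\rangle=\int_0^t e^{-\mu_\alpha(t-s)}b_\alpha(s)\,{\rm d}s$, whence Young's inequality in time gives $\|\langle p_1(\cdot),e_\alpha\rangle\|_{L^2(0,t_1)}\le\big\|e^{-\mu_\alpha\cdot}\big\|_{L^1(0,t_1)}\|b_\alpha\|_{L^2(0,t_1)}$, with $\big\|e^{-\mu_\alpha\cdot}\big\|_{L^1(0,t_1)}\le\mu_\alpha^{-1}$ for $\mu_\alpha>0$ and $\le t_1$ for $\mu_\alpha=0$. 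Since $(1+\mu)^2\mu^{-2}$ is bounded on $[\mu_{\min},\infty)$ and the single term $\mu_\alpha=0$ is controlled by the factor $t_1^2$, multiplying by $1+\mu_\alpha$, summing over $\alpha$ and inserting the admissibility bound yields $p_1\in L^2(0,t_1;\fH_{\frac12})\subseteq L^q(0,t_1;\fV)$ for all $q<2$. Combining the three terms establishes the claimed $\fV$-regularity, and the upgrade to all $q<\infty$ when $p_0\in\fV$.

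Finally, for the weak identity~\eqref{eq:weak-repr} and the $W^{1,q}(0,t_1;\fH_{-\frac12})$-regularity I would proceed in components. With $r_\alpha(s):=\langle\fB_1 F(p(s),u(s))+\fB_2 d(s),e_\alpha\rangle\in L^\infty(0,t_1)$, the scalar $\eta_\alpha(t):=\langle p(t),e_\alpha\rangle_\fH$ satisfies $\eta_\alpha(t)=e^{-\mu_\alpha t}\eta_\alpha(0)+\int_0^t e^{-\mu_\alpha(t-s)}r_\alpha(s)\,{\rm d}s$, hence $\eta_\alpha\in W^{1,\infty}(0,t_1)$ with $\dot\eta_\alpha=-\mu_\alpha\eta_\alpha+r_\alpha$ a.e. Since $\fA:\fH_{\frac12}\to\fH_{-\frac12}$ is bounded (by $\mu^2/(1+\mu)\le 1+\mu$) and $p\in L^q(\fV)$, the candidate right-hand side $\Psi:=-c\fA p+\fB_1 F(p,u)+\fB_2 d$ lies in $L^q(0,t_1;\fH_{-\frac12})$; testing against a fixed $v=\sum_\alpha v_\alpha e_\alpha\in\fV$ and using $-c\langle p(t),\fA v\rangle_\fH=-\sum_\alpha\mu_\alpha\eta_\alpha(t)v_\alpha$ together with the componentwise ODEs shows $\tfrac{{\rm d}}{{\rm d}t}\langle p(t),v\rangle_\fH=\langle\Psi(t),v\rangle_\fH$ for a.a.\ $t$, the pairing being the one compatible with the $\fH$ inner product noted before the lemma. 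This identifies $\dot p=\Psi\in L^q(0,t_1;\fH_{-\frac12})$, giving both~\eqref{eq:weak-repr} and $p\in W^{1,q}(0,t_1;\fH_{-\frac12})$. The only points needing care are the justification of term-by-term differentiation of $\sum_\alpha\eta_\alpha(t)v_\alpha$, which I would obtain from the uniform $L^q$-bounds on the partial sums already established, and the same $q<2$ versus $q<\infty$ dichotomy, which is again dictated solely by the free term $T(\cdot)p_0$.
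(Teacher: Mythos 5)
Your treatment of the first part of the lemma is essentially sound, and it genuinely differs from the paper's route: where you diagonalize $c\fA$ in the eigenbasis $(z_\alpha)$ and estimate the convolution term componentwise via Young's inequality (exploiting the spectral gap $\mu_\alpha\ge c\min_k\lambda_k$ for $\alpha\neq 0$), the paper instead invokes maximal $L^q$-regularity of analytic semigroups on Hilbert spaces (de Simon's theorem, applied to the extended semigroup on $\fH_{-\frac12}$) to handle the full inhomogeneity $f=\fB_1F(p,u)+\fB_2 d$ in one stroke. Your spectral argument is more elementary and self-contained, and your identification $\dot p=-c\fA p+f$ via boundedness of $\fA:\fH_{\frac12}\to\fH_{-\frac12}$ is a clean way to get the $W^{1,q}(0,t_1;\fH_{-\frac12})$ statement and \eqref{eq:weak-repr} once the $L^q(\fV)$ regularity is in hand (the term-by-term differentiation you flag can indeed be justified by dominated convergence of the partial sums in $L^1(0,t_1;\fH_{-\frac12})$).

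However, there is a genuine gap in your proof of the final assertion. You claim the dichotomy $q<2$ versus $q<\infty$ is ``dictated solely by the free term $T(\cdot)p_0$'', but within your method this is false: your Young-inequality estimate for $p_1$ delivers only $p_1\in L^2(0,t_1;\fH_{\frac12})$, not $L^q$ for $q>2$. The obstruction is structural. The componentwise bound $\|c_\alpha\|_{L^q}\le\|e^{-\mu_\alpha\cdot}\|_{L^1}\|b_\alpha\|_{L^q}$ is available for every $q$, but assembling it into the weighted $\ell^2_\alpha$ norm of $\fH_{\frac12}$ requires interchanging the $\ell^2_\alpha$ sum with the $L^q_t$ norm; for $q>2$ Minkowski's inequality leaves you needing $\sum_\alpha(1+\mu_\alpha)\mu_\alpha^{-2}\|b_\alpha\|_{L^q}^2<\infty$, and the only information you have on the individual modes is the uniform-in-$t$ bound $\sum_\alpha(1+\mu_\alpha)^{-1}|b_\alpha(s)|^2\le c^{-1}\|F(p(s),u(s))\|_{\fH^n}^2$, which gives $\|b_\alpha\|_{L^\infty}\lesssim(1+\mu_\alpha)^{1/2}$ and hence a divergent sum (each summand is bounded below by a positive constant, and there are infinitely many modes). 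Only at $q=2$ does Plancherel-type bookkeeping let the $\ell^2_\alpha$ and $L^2_t$ structures commute. What you need for $2\le q<\infty$ is precisely the maximal regularity estimate $\|\fA_{-\frac12}\tilde p\|_{L^q(0,t_1;\fH_{-\frac12})}\le C_q\|f\|_{L^q(0,t_1;\fH_{-\frac12})}$ with $C_q$ independent of the spectral decomposition, which is the content of de Simon's theorem used in the paper; it is true for analytic semigroups on Hilbert spaces but does not follow from the componentwise convolution bounds you use. So the second claim of the lemma (all $1\le q<\infty$ when $p_0\in\fV$, including the corresponding $W^{1,q}(0,t_1;\fH_{-\frac12})$ statement, which inherits the same restriction through $\fA p_1$) is not established by your argument and requires this additional tool.
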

\begin{proof} Fix $1<q<\infty$. First we conclude from~\cite[Thm.~3.10.11]{Staf05} that the analytic semigroup~$(T(t))_{t\ge 0}$ generated by $-c\fA$ on~$\fH$ extends to an analytic semigroup~$(T_{-\frac12}(t))_{t\ge 0}$ on~$\fH_{-\frac12}$ with generator $-c\fA_{-\frac12}$. Since~$\fH_{-\frac12}$ is again a Hilbert space, the analytic semigroup~$(T_{-\frac12}(t))_{t\ge 0}$ has the maximal regularity property as shown in~\cite{DeSi64}, cf.\ also~\cite{ArenBu03}. This means that, in particular,
\begin{equation}\label{eq:MR}
    \forall\, f\in L^q(0,t_1;\fH_{-\frac12}):\ x\in W^{1,q}(0,t_1;\fH_{-\frac12})\ \wedge\ \fA_{-\frac12} x \in L^q(0,t_1;\fH_{-\frac12}),
\end{equation}
where~$x$ denotes the mild solution of the Cauchy problem
\begin{equation}\label{eq:CP}
  \dot x(t) = -c\fA_{-\frac12} x(t) + f(t),\quad x(0)=0
\end{equation}
in~$\fH_{-\frac12}$, that is $x(t) = \int_0^t T_{-\frac12}(t-s) f(s) {\rm d}s$ for $t\in [0,t_1]$. {Recall~\eqref{eq:B1B2F} and} define
\[
    f(t):= \fB_1 F\big(p(t),u(t)\big) + \fB_2 d(t),\quad t\in[0,t_1],
\]
then it follows from $p\in C([0,t_1];\fH)$, $u\in C([0,t_1];\R^n)$ and $\fB_1 \in \cB(\fH^n; \fH_{-\frac12})$ by Lemma~\ref{Lem:admiss} that $f\in L^q(0,t_1;\fH_{-\frac12})$, where we have used that $\|d(t)\|_{\fH_{-\frac12}} \le \|d(t)\|_\fH$ by~\eqref{eq:norm-H12}. Therefore, property~\eqref{eq:MR} implies that
\[
    \tilde p(\cdot):= \int_0^{\cdot} T_{-\frac12}(\cdot-s) f(s) {\rm d}s\quad\text{satisfies}\quad \tilde p\in W^{1,q}(0,t_1;\fH_{-\frac12})\ \wedge\ \fA_{-\frac12} \tilde p \in L^q(0,t_1;\fH_{-\frac12}).
\]
We calculate, for $t\in[0,t_1]$,
\begin{align*}
  &\|\tilde p(t)\|^2_{\fH_{\frac12}} = \langle (I+c\fA)\tilde p(t),\tilde p(t)\rangle_\fH = \|\tilde p(t)\|_\fH^2 + c \left\langle \fA_{-\frac12} \tilde p(t),\frac{\tilde p(t)}{\|\tilde p(t)\|_{\fH_{\frac12}}}\right\rangle_\fH \|\tilde p(t)\|_{\fH_{\frac12}}\\
  &\le \|\tilde p(t)\|_\fH^2 + c \| \fA_{-\frac12} \tilde p(t)\|_{\fH_{-\frac12}} \|\tilde p(t)\|_{\fH_{\frac12}} \le \|\tilde p(t)\|_\fH^2 +\frac{c^2}{2} \| \fA_{-\frac12} \tilde p(t)\|_{\fH_{-\frac12}}^2 + \frac12 \|\tilde p(t)\|_{\fH_{\frac12}}^2, \\
\end{align*}
which gives
\[
    \|\tilde p(t)\|_{\fH_{\frac12}} \le \left(2\|\tilde p(t)\|_\fH^2 + c^2 \| \fA_{-\frac12} \tilde p(t)\|_{\fH_{-\frac12}}^2\right)^{1/2}\le \sqrt{2} \|\tilde p(t)\|_\fH + c \| \fA_{-\frac12} \tilde p(t)\|_{\fH_{-\frac12}}.
\]
Since $T_{-1}(t-s) f(s) = T_{-\frac12}(t-s) f(s)$ we have that $p(t) = T(t) p_0 + \tilde p(t)$ for all $t\in [0,t_1]$ and, as~$p$ is a mild solution, $\tilde p\in C([0,t_1];\fH)$, which also gives $\tilde p\in L^q(0,t_1;\fH)$. Therefore, we have
\[
    \|\tilde p\|^q_{L^q(0,t_1;\fH_{\frac12})} \le 2^{\frac{q-1}{q}} \left( 2^{\frac{q}{2}} \|\tilde p\|^q_{L^q(0,t_1;\fH)} + c^q \| \fA_{-\frac12} \tilde p\|_{L^q(0,t_1;\fH_{-\frac12})}^q\right),
\]
by which $\tilde p \in L^q(0,t_1;\fH_{\frac12})$. Attention now turns to the term $T(t) p_0$. As~$(T(t))_{t\ge 0}$ is analytic it follows from~\cite[Thm.~3.10.6]{Staf05} that
\[
   \exists\,M>0\ \forall\, t>0:\ \|c\fA T(t) p_0\|_\fH \le \frac{M}{t} \|p_0\|_\fH.
\]
Therefore, we find that, using the inner product $\langle v,w\rangle_{\fH_{\frac12}} = \langle v,w\rangle_\fH + c \fa(v,w)$ in $\fH_{\frac12}$,
\begin{align*}
  &\|\ddt T(t)p_0\|_{\fH_{-\frac12}} = \sup_{\|w\|_{\fH_{\frac12}}\le 1} \left|\left\langle c\fA T(t) p_0,w\right\rangle_\fH\right|= \sup_{\|w\|_{\fH_{\frac12}}\le 1} \left|\left\langle T(t) p_0,w\right\rangle_{\fH_{\frac12}} - \left\langle T(t) p_0,w\right\rangle_\fH\right| \\
  &\stackrel{\eqref{eq:norm-H12}}{\le} \sup_{\|w\|_{\fH_{\frac12}}\le 1} \left( \|T(t) p_0\|_{\fH_{\frac12}} \|w\|_{\fH_{\frac12}} + \|T(t) p_0\|_\fH \|w\|_{\fH_{\frac12}}\right) \\
  &\stackrel{\eqref{eq:norm-H12}}{\le} \sqrt{\|T(t) p_0\|_\fH^2 + \|c\fA T(t)p_0\|_\fH \|T(t)p_0\|_\fH} + \|T(t) p_0\|_\fH \le \left(1 + \sqrt{1 + \tfrac{M}{t}} \right) \|p_0\|_\fH
\end{align*}
for $t>0$, where we have used that~$(T(t))_{t\ge 0}$ is a contraction semigroup. Therefore $\ddt T(\cdot)p_0 \in L^q(0,t_1;\fH_{-\frac12})$ for all $1\le q < 2$. Together with $\|T(t)p_0\|_{\fH_{-\frac12}} \le \|T(t)p_0\|_{\fH} \le \|p_0\|_\fH$ for $t\ge 0$ this implies that $T(\cdot)p_0 \in W^{1,q}(0,t_1;\fH_{-\frac12})$ for $1\le q < 2$. Moreover, in the above inequality we have used that $\|T(t) p_0\|_{\fH_{\frac12}}\le \sqrt{1 + \tfrac{M}{t}} \|p_0\|_\fH$ for $t>0$, by which $T(\cdot)p_0 \in L^q(0,t_1;\fH_{\frac12})$ for all $1\le q <2$. Together with the findings on~$\tilde p$ we thus obtain $p\in L^q(0,t_1;\fH_{\frac12}) \cap W^{1,q}(0,t_1;\fH_{-{\frac12}})$ for all $1\le q<2$.

If $p_0\in \fV = \fH_{\frac12}$, then it follows from~\cite[Prop.~4.2.5]{TucsWeis09} (with $X=\fH_{\frac12}$ and $B=0$) that $T(\cdot)p_0 \in C([0,\infty);\fH_{\frac12})$, by which $T(\cdot)p_0 \in L^\infty(0,t_1;\fH_{\frac12})$. Since $\|\ddt T(t)p_0\|_{\fH_{-\frac12}} \le \|T(t) p_0\|_{\fH_{\frac12}} + \|p_0\|_\fH$ it further follows $T(\cdot)p_0 \in W^{1,\infty}(0,t_1;\fH_{-\frac12})$ and together with the findings on~$\tilde p$ this gives $p\in L^q(0,t_1;\fH_{\frac12}) \cap W^{1,q}(0,t_1;\fH_{-{\frac12}})$ for all $1\le q<\infty$.

Finally, since $p\in W^{1,1}(0,t_1;\fH_{-{\frac12}})$ we find that it satisfies~\eqref{eq:FPE-OU-dist} pointwise almost everywhere in~$\fH_{-{\frac12}}$, which gives~\eqref{eq:weak-repr}.
\end{proof}

\begin{remark}
  Note that it is possible to extend the regularity results from Lemma~\ref{Lem:sln-weak} to obtain statements in terms of the spaces of H\"{o}lder continuous functions using the theory from~\cite{Luna95}. Then, \emph{mutatis mutandis}, similar results as derived in~\cite[App.~C]{BergBrei21} hold.
\end{remark}

We may now infer the following properties of a mild solution of~\eqref{eq:FPE-OU-dist} in the case $d=0$. First we recall the eigenvectors $v_k$ of $\Gamma$ and define the orthogonal matrix {
\begin{equation}\label{eq:VRLambda}
\begin{aligned}
  V &:= [v_1,\ldots,v_n]\in\R^{n\times n},\quad \text{and}\\
  \Lambda &:= \diag(\lambda_1,\ldots,\lambda_n),\quad R:= \diag(\sqrt{\lambda_1/2},\ldots,\sqrt{\lambda_n/2}).
\end{aligned}
\end{equation}
}

\begin{proposition}\label{Prop:sln-prop}
Use the assumptions from Definition~\ref{Def:solution}, assume that $d=0$ and let~$p$ be a mild solution of~\eqref{eq:FPE-OU-dist} on $[0,t_1]$. Then the following statements are true:
\begin{enumerate}
  \item $\int_{\R^n} p(t,x) {\rm d}x = \int_{\R^n} p_0(x) {\rm d}x$ for all $t\in [0,t_1]$.
  \item If $p_0(x)\ge 0$ for almost all $x\in\R^n$, then $p(t,x)\ge 0$ for all $t\in [0,t_1]$ and almost all $x\in\R^n$.
  \item {Recall~\eqref{eq:VRLambda}.} If $\int_{\R^n} p_0(x) {\rm d}x = 1$, then for $y:[0,t_1]\to\R^n$ as in~\eqref{eq:output} there exists $K\in\R^{n\times n}$, which is independent of~$t_1$, such that, for all $t\in[0,t_1]$,
\begin{align*}
    {\rm Cov}(t) = \int_{\R^n} \big(x-y(t)\big)\big(x-y(t)\big)^\top p(t,x){\rm d} x
= \tfrac14 V R^{-1} \big( e^{-c\Lambda t} K e^{-c\Lambda t} + 2 I\big) R^{-1} V^\top.
\end{align*}
If~$p$ is even a mild solution of~\eqref{eq:FPE-OU-dist} on~$\R_{\ge 0}$, then $\lim_{t\to\infty} {\rm Cov}(t) = c \Gamma^{-1}$.
\end{enumerate}
\end{proposition}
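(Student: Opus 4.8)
My plan is to prove the three assertions using throughout the weak formulation~\eqref{eq:weak-repr} of Lemma~\ref{Lem:sln-weak}, tested against functions $v=e^{-\phi}\psi$ with $\psi$ a polynomial. Such $v$ lie in $\cD(\fA)\subset\fV$: since $e^{\phi}v=\psi$ has gradient in $V^n$, Lemma~\ref{Lem:fA-FPO} applies and gives $\fA v$ explicitly, while $\fA v\in\fH$ makes $\langle p(t),\fA v\rangle_\fH$ an ordinary inner product. The two elementary facts I rely on are that $\langle p(t),e^{-\phi}\psi\rangle_\fH=\int_{\R^n}\psi(x)\,p(t,x)\,{\rm d}x$ (the weight $e^{\phi}$ cancels), and that all arising moments are finite because $p(t)\in\fH=L^2(\R^n;e^{\phi})$ and $e^{-\phi}\psi\in\fH$ for every polynomial $\psi$, so Cauchy--Schwarz controls $\int|\psi|\,p(t)$. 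For~(i) I would simply test against the zeroth eigenfunction $z_0=e^{-\phi}H_0=e^{-\phi}$: then $\fA z_0=0$ and, by the integration-by-parts identity underlying Lemma~\ref{Lem:admiss}, $\langle\fB_1 F(p(t),u(t)),e^{-\phi}\rangle_\fH=\int_{\R^n}(p(t)g(u(t)))^\top\nabla 1\,{\rm d}x=0$, so~\eqref{eq:weak-repr} (with $d=0$) reduces to $\ddt\langle p(t),z_0\rangle_\fH=0$; hence $\int_{\R^n}p(t,x)\,{\rm d}x=\langle p(t),z_0\rangle_\fH$ is constant.

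For~(ii) I would first note that $(T(t))_{t\ge0}$ is positivity preserving. Since $h:f\mapsto e^{-\phi}f$ is a unitary order isomorphism from $H$ onto $\fH$ and $T(t)=h\,S(ct)\,h^{-1}$, where $(S(t))_{t\ge0}$ is generated by $-A$, it suffices that $(S(t))$ be positive; this follows from the first Beurling--Deny criterion for the form $a$, because $\nabla|u|=\sgn(u)\nabla u$ gives $a(|u|,|u|)=a(u,u)$. The genuinely delicate point, and the main obstacle of the whole proposition, is the transport term $\fB_1 F(\cdot,u)=-\div(\,\cdot\,g(u))$. For each frozen value $\bar u$ the full generator $-c\fA+\fB_1 F(\cdot,\bar u)=\div\big(c\nabla\,\cdot\,+\,\cdot\,(\Gamma x-g(\bar u))\big)$ is again a Fokker--Planck operator of exactly the same type, with $\phi$ replaced by the shifted potential $\tfrac1{2c}(x-\Gamma^{-1}g(\bar u))^\top\Gamma(x-\Gamma^{-1}g(\bar u))$, and hence generates a positive semigroup by the same Beurling--Deny argument in the correspondingly shifted weighted space. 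I would then obtain positivity of the full non-autonomous evolution by freezing $u$ on a fine time grid, concatenating the positive frozen-time evolutions, and passing to the limit via continuous dependence of the mild solution on $u$; making this stitching rigorous for the unbounded, time-varying perturbation is where the real work lies.

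For~(iii) I assume $\int p_0=1$, so $\int p(t)=1$ by~(i), and derive closed ordinary differential equations for the mean $y(t)$ and the second-moment matrix $M(t)=\big(\int_{\R^n}x_ix_j\,p(t,x)\,{\rm d}x\big)_{ij}$ by testing~\eqref{eq:weak-repr} against $e^{-\phi}x_i$ and $e^{-\phi}x_ix_j$. Lemma~\ref{Lem:fA-FPO} yields $\fA(e^{-\phi}x_i)=\tfrac1c e^{-\phi}(\Gamma x)_i$ and $\fA(e^{-\phi}x_ix_j)=e^{-\phi}\big(\tfrac1c(\Gamma x)_i x_j+\tfrac1c(\Gamma x)_j x_i-2\delta_{ij}\big)$, and the integration-by-parts identity for $\fB_1$ turns the control term into $g(u)_i$ resp.\ $g(u)_i y_j+g(u)_j y_i$. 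Collecting terms gives $\dot y=-\Gamma y+g(u)$ and $\dot M=-\Gamma M-M\Gamma+2cI+g(u)y^\top+y\,g(u)^\top$. The conceptual heart is now that, upon forming ${\rm Cov}(t)=M(t)-y(t)y(t)^\top$, all control-dependent terms cancel, leaving the input-independent Lyapunov equation $\ddt{\rm Cov}=-\Gamma\,{\rm Cov}-{\rm Cov}\,\Gamma+2cI$.

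It then remains to solve this Lyapunov equation explicitly. Diagonalizing $\Gamma=V(c\Lambda)V^\top$ and setting $\tilde P:=V^\top{\rm Cov}\,V$ decouples it into the scalar equations $\dot{\tilde P}_{ij}=-c(\lambda_i+\lambda_j)\tilde P_{ij}+2c\delta_{ij}$, whose solution is $\tilde P(t)=e^{-c\Lambda t}(\tilde P(0)-\Lambda^{-1})e^{-c\Lambda t}+\Lambda^{-1}$. Using $R^{-2}=2\Lambda^{-1}$ and that diagonal matrices commute, this is exactly the claimed form with $K:=4R(\tilde P(0)-\Lambda^{-1})R$, which depends only on $p_0$ and is therefore independent of $t_1$. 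Finally, since every $\lambda_k>0$, the factor $e^{-c\Lambda t}$ vanishes as $t\to\infty$, leaving $\lim_{t\to\infty}{\rm Cov}(t)=V\Lambda^{-1}V^\top=c\Gamma^{-1}$.
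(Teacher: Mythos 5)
Parts (i) and (iii) of your proposal are correct. Your (i) coincides with the paper's argument: test the weak formulation against $e^{-\phi}$, use $\fA e^{-\phi}=0$, and kill the transport term by the vanishing boundary integrals from Proposition~\ref{Prop:eigenval-A}\,(ii) combined with the regularity $p\in L^1(0,t_1;\fV)$ from Lemma~\ref{Lem:sln-weak}. Your (iii) is a mild but harmless variant: the paper computes with Hermite moments $\mu_k^1,\mu_{k,l},\mu_k^2$ in the rotated coordinates $u_k^\top x$ and shows $\dot P=-c(\Lambda P+P\Lambda)$ for $P=M_A+M_B-\mu^1(\mu^1)^\top$, whereas you test directly against $e^{-\phi}x_i$ and $e^{-\phi}x_ix_j$ (legitimate, since these are finite linear combinations of the eigenfunctions $z_\alpha$ with $|\alpha|\le 2$, hence in $\cD(\fA)$) and package the identical cancellation of the $g(u)$-terms as the Lyapunov equation $\ddt{\rm Cov}(t)=-\Gamma\,{\rm Cov}(t)-{\rm Cov}(t)\,\Gamma+2cI$. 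Your formulas check out: $K=4R\big(\tilde P(0)-\Lambda^{-1}\big)R$ depends only on ${\rm Cov}(0)$, and $\tfrac12 VR^{-2}V^\top=V\Lambda^{-1}V^\top=c\Gamma^{-1}$. This is, if anything, a tidier presentation of the same computation.

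Part (ii), however, contains a genuine gap, and it sits exactly where you say ``the real work lies.'' First, your frozen-coefficient evolutions do not act on the right space: for fixed $\bar u$ the operator $\div\big(c\nabla\,\cdot\,+\,\cdot\,(\Gamma x-g(\bar u))\big)$ is indeed a Fokker--Planck operator with shifted potential $\phi_{\bar u}$, self-adjoint and positivity-preserving on $L^2(\R^n;e^{\phi_{\bar u}})$ by your Beurling--Deny argument -- but $\phi-\phi_{\bar u}$ is a nonconstant affine function of $x$, so the weight ratio $e^{\phi-\phi_{\bar u}}$ is unbounded in both directions and neither of the spaces $\fH$ and $L^2(\R^n;e^{\phi_{\bar u}})$ contains the other. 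The data $p(t_k)\in\fH$ at your grid points therefore need not belong to the space on which the frozen semigroup is defined, so the concatenation is not even well-posed in your framework. Second, the limit passage you defer to ``continuous dependence of the mild solution on $u$'' amounts to a product formula for a non-autonomous evolution with an unbounded, $\fH_{-\frac12}$-valued bilinear perturbation; nothing in the paper, nor in the admissibility machinery it imports, supplies such a result, and asserting it \emph{is} the proof rather than a remark. A plausible repair would be to pass to $L^1(\R^n)$ (into which $\fH$ embeds by Cauchy--Schwarz, since $e^{-\phi}\in L^1(\R^n)$), where each frozen Ornstein--Uhlenbeck semigroup has an explicit nonnegative Mehler kernel, but the consistency of the $L^1$ evolution with the $\fH$-mild solution and the convergence of the time discretization would still have to be established.

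For comparison, the paper avoids semigroup positivity entirely and gives a self-contained energy argument: it expands the negative part $p^-(t)$ (using $\ddt p^-(t)=\mathds{1}_{\{p<0\}}\dot p(t)$ and $\tfrac{\partial}{\partial x_i}p^-=\mathds{1}_{\{p<0\}}\tfrac{\partial p}{\partial x_i}$) in the orthonormal basis $w_\alpha=c_\alpha^{-1}e^{-\phi}H_\alpha$, derives $\dot\beta_\alpha(t)=-c\lambda_\alpha\beta_\alpha(t)+\langle p^-(t)g(u(t)),e^{-\phi}\nabla(e^{\phi}w_\alpha)\rangle_{\fH^n}$ for the coefficients, absorbs the transport term into the dissipative form term via Young's inequality to get $\tfrac12\ddt\|p_k^-(t)\|_\fH^2\le\tfrac{D}{2}\|p^-(t)\|_\fH^2$ with $D=\tfrac1c\|g(u)\|_{L^\infty}^2$ for the truncations $p_k^-$, and concludes $p^-\equiv 0$ from $p^-(0)=0$ by Gr\"onwall after letting $k\to\infty$. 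You should either adopt such a direct estimate or fully execute the product-formula program, including the change of functional setting it requires.
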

\begin{proof}
We show (i). By~\cite[Rem.~4.1.2]{TucsWeis09} the mild solution~$p$ admits the representation
\[
    \langle p(t) - p_0, v\rangle_\fH = \int_0^t \langle p(s), \fA v\rangle_\fH + \langle  \fB_1 F\big(p(s),u(s)\big), v\rangle_\fH {\rm d}s,\quad v\in\fV,
\]
where we have used that $\cA$ is self-adjoint by Proposition~\ref{Prop:eigenval-frakA}. Let $v = e^{-\phi}$, then $\fA v = 0$ by Proposition~\ref{Prop:eigenval-frakA} and
\begin{align*}
  \langle  \fB_1 F\big(p(s),u(s)\big), v\rangle_\fH &= - \int_{\R^n} \div\big( p(s,x) g(u(s))\big) {\rm d}x = -\lim_{r\to\infty} \int_{S_r}  p(s,x) g(u(s)) \cdot {\vec{n}}\,{\rm d}S = 0
\end{align*}
by a combination of Proposition~\ref{Prop:eigenval-A}\,(ii) and Lemma~\ref{Lem:sln-weak}, where we have used that $e^{\phi} p(s) g(u(s)) \in V^n$ for all $s\in[0,t_1]$. This proves the claim.

We show (ii). First we define the positive and negative part of~$p$ in the usual way by
\[
    p^+(t,x) := \max\{p(t,x), 0\},\quad p^-(t,x):=\max\{-p(t,x),0\}
\]
for $(t,x)\in[0,t_1]\times\R^n$. Since~$p$ is a mild solution and $\|p^\pm(t)\|_\fH \le \|p(t)\|_\fH$ for all $t\in[0,t_1]$, we have $p^\pm\in C([0,t_1];\fH)$. Define $\tilde H_\alpha := c_\alpha^{-1} H_\alpha$, where $c_\alpha = \|e^{-\phi} H_\alpha\|_\fH$ for $\alpha\in (\N_0)^n$. Then $w_\alpha := e^{-\phi} \tilde H_\alpha$ constitutes an orthonormal basis in $\fH$ and hence we have that
\[
    p^-(t) = \sum_{\alpha \in (\N_0)^n} \beta_\alpha(t) w_\alpha,\quad \beta_\alpha(t) = \langle p^-(t), w_\alpha\rangle_\fH,\quad t\in[0,t_1].
\]
Fix $k\in\N$, denote $|\alpha| = \alpha_1 + \ldots + \alpha_n$ for $\alpha\in (\N_0)^n$, and define $p_k^-(t) := \sum_{|\alpha|\le k} \beta_\alpha(t) w_\alpha$ for $t\in[0,t_1]$. Clearly $\ddt p^-(t) = \mathds{1}_{\{p<0\}} \dot p(t)$ and $\tfrac{\partial}{\partial x_i} p^-(t) = \mathds{1}_{\{p<0\}} \tfrac{\partial p}{\partial x_i}(t)$ for almost all $t\in [0,t_1]$, cf.\ e.g.~\cite[Thm.~2.8]{Chip00}. Hence, we have, {recalling~\eqref{eq:H-lambda-alpha}},
\begin{align*}
  \dot \beta_\alpha(t) & = \langle \dot p(t), \mathds{1}_{\{p<0\}} w_\alpha\rangle_\fH\stackrel{\eqref{eq:weak-repr}}{=} -c\langle p(t), {\fA \big( \mathds{1}_{\{p<0\}} w_\alpha\big)} \rangle_\fH + \langle\fB_1 F\big(p(t),u(t)\big), \mathds{1}_{\{p<0\}} w_\alpha\rangle_\fH\\
  &= -c\lambda_\alpha \langle {p^-(t)}, w_\alpha \rangle_\fH - \int_{\R^n} \div\big( p^-(t,x) g(u(t))\big) e^{\phi(x)} w_\alpha(x) {\rm d}x\\
  &\stackrel{(*)}{=}  -c\lambda_\alpha \beta_\alpha(t) + \int_{\R^n} p^-(t,x) g(u(t)) \nabla \big(e^{\phi(x)} w_\alpha(x)\big) {\rm d}x  \\
  &=  -c\lambda_\alpha \beta_\alpha(t) + \langle p^-(t) g(u(t)), e^{-\phi} \nabla (e^{\phi} w_\alpha)\rangle_{\fH^n}
\end{align*}
for almost all $t\in [0,t_1]$ and all $\alpha\in (\N_0)^n$, where $(*)$ follows from Proposition~\ref{Prop:eigenval-A}\,(ii) and Lemma~\ref{Lem:sln-weak} upon observing that $\|p^-\|_{L^1(0,t_1;\fV)} \le \|p\|_{L^1(0,t_1;\fV)}$ and hence $p^-\in L^1(0,t_1;\fV)$. {Further observe that $\fa\big(p_k^-(t),p_k^-(t)\big) = \langle p_k^-(t), \fA p_k^-(t) \rangle_\fH =  \sum_{|\alpha|\le k} \lambda_\alpha \beta_\alpha(t)^2$ by definition of~$w_\alpha$.} Therefore, we obtain, invoking Parseval's identity,
\begin{align*}
     &\tfrac12 \ddt \|p_k^-(t)\|_\fH^2 = \sum_{|\alpha|\le k} \beta_\alpha(t) \dot \beta_\alpha(t) = \sum_{|\alpha|\le k} \left( -c\lambda_\alpha \beta_\alpha(t)^2 \!+\! \langle p^-(t) g(u(t)), e^{-\phi} \nabla (e^{\phi} \beta_\alpha(t) w_\alpha)\rangle_{\fH^n}\right)\\
     &=-c \fa\big(p_k^-(t),p_k^-(t)\big) + \langle p^-(t) g(u(t)), e^{-\phi} \nabla (e^{\phi} p_k^-(t))\rangle_{\fH^n}\\
     &\le -c \fa\big(p_k^-(t),p_k^-(t)\big) + \|g(u)\|_{L^\infty(0,t_1;\R^n)} \|p^-(t)\|_\fH \|e^{-\phi} \nabla (e^{\phi} p_k^-(t))\|_{\fH^n}\\
     &\le -c \fa\big(p_k^-(t),p_k^-(t)\big) + \frac{1}{2c} \|g(u)\|_{L^\infty(0,t_1;\R^n)}^2 \|p^-(t)\|_\fH^2 + \frac{c}{2} \|e^{-\phi} \nabla (e^{\phi} p_k^-(t))\|_{\fH^n}^2 \le  \tfrac{D}{2} \|p^-(t)\|_\fH^2
\end{align*}
for almost all $t\in [0,t_1]$, where $D:= \frac{1}{c} \|g(u)\|_{L^\infty(0,t_1;\R^n)}^2$ and we have used that $\fa\big(p_k^-(t),p_k^-(t)\big) = \|e^{-\phi} \nabla (e^{\phi} p_k^-(t)\|_{\fH^n}^2$. Since
\[
    \|p_k^-(t)\|_\fH^2 = \sum_{|\alpha|\le k} \beta_\alpha(t)^2 \le \sum_{\alpha\in (\N_0)^n} \beta_\alpha(t)^2 = \|p^-(t)\|_\fH^2
\]
by Parseval's identity we find that $\eps_k(t) := \|p^-(t)\|_\fH^2 - \|p_k^-(t)\|_\fH^2 \ge 0$ and satisfies
\[
   \lim_{k\to\infty} \sup_{t\in[0,t_1]} \eps_k(t) = 0.
\]
Hence $\ddt \|p_k^-(t)\|_\fH^2 \le D \|p^-_k(t)\|_\fH^2 + D \eps_k(t)$, which implies
\[
    \|p^-_k(t)\|_\fH^2 \le e^{D t} \|p^-_k(0)\|_\fH^2 + \int_0^t D e^{D(t-s)} \eps_k(s) {\rm d}s \le e^{D t} \|p^-(0)\|_\fH^2 + e^{D t} \sup_{s\in[0,t_1]} \eps_k(s)
\]
for all $t\in[0,t_1]$ by Gr\"onwall's lemma. Since
\[
    p^-(0,x) = \max\{-p(0,x),0\} = \max\{-p_0(x),0\} = 0
\]
for almost all $x\in\R^n$, it follows that $\lim_{k\to\infty} \|p^-_k(t)\|_\fH^2 = 0$ for all $t\in[0,t_1]$, thus $p^-(t) = 0\in\fH$ and the claim is shown.

We show (iii). {Recall the definition of $u_1,\ldots,u_n$ from Section~\ref{Sec:FPO}.} Let $k,l\in\{1,\ldots,n\}$ with $k\neq l$ and define, for $t\in [0,t_1]$ and $x\in\R^n$,
\begin{align*}
  z_k^1(x)&:= e^{-\phi(x)} H_1(u_k^\top x),\quad z_{k,l}(x):= e^{-\phi(x)} H_1(u_k^\top x) H_1(u_l^\top x),\quad z_k^2(x):= e^{-\phi(x)} H_2(u_k^\top x),\\
  \mu_k^1(t)&:= \langle p(t), z_k^1 \rangle_\fH,\quad \mu_{k,l}(t):= \langle p(t), z_{k,l}\rangle_\fH,\quad \mu_k^2(t):= \langle p(t), z_k^2\rangle_\fH.
\end{align*}
Note that $\fA z_k^1 = \lambda_k z_k^1$, $\fA z_{k,l} = (\lambda_k + \lambda_l) z_{k,l}$ and $\fA z_k^2 = 2 \lambda_k z_k^2$ by Propostion~\ref{Prop:eigenval-frakA}. Then it follows from Lemma~\ref{Lem:sln-weak} that
\begin{align*}
    &\dot \mu_k^1(t) = -c \langle p(t), \fA z_k^1\rangle_\fH \!+\! \langle \fB_1 F\big(p(t),u(t)\big),z_k^1\rangle_{\fH} = -c\lambda_k \langle p(t), z_k^1\rangle_\fH \!-\!  \langle \div\big(p(t)g(u(t))\big), z_k^1\rangle_\fH \\
    &\stackrel{\rm Prop.\,\ref{Prop:eigenval-A}\,(ii)}{=}  -c\lambda_k \mu_k^1(t) + \int_{\R^n} p(t,x) g(u(t))^\top \nabla \left(e^{\phi(x)} z_k^1(x)\right) {\rm d}x \stackrel{\eqref{eq:nabla-Halpha}}{=} -c\lambda_k \mu_k^1(t) + 2 u_k^\top g(u(t))
\end{align*}
for almost all $t\in[0,t_1]$ and $k=1,\ldots,n$, where we have used that {$\int_{\R^n} p(t,x) {\rm d}x = \int_{\R^n} p_0(x) {\rm d}x = 1$ by~(i) and the assumption.} Analogously, we derive that
\begin{align*}
  \dot \mu_{k,l}(t) &= -c(\lambda_k+\lambda_l) \mu_{k,l}(t) + 2 \big(\mu_k^1(t) u_l + \mu_l^1(t) u_k\big)^\top g(u(t)),\\
  \dot \mu_k^2(t) &= -2c\lambda_k \mu_k^2(t)+ 4 \mu_k^1(t) u_k^\top g(u(t)).
\end{align*}
Now, {recall~\eqref{eq:VRLambda} and} let $F\in\R^{n\times n}$ be such that $I = [u_1,\ldots, u_n] F^\top = V R F^\top$. Then, for all $i,j = 1,\ldots,n$ and $x\in\R^n$, we have
\begin{align*}
  x_i x_j &=  (x^\top e_i) (x^\top e_j) = \left(\sum_{k=1}^n  F_{i,k} x^\top u_{k}\right)  \left(\sum_{l=1}^n  F_{j,l} x^\top u_{l}\right) \\
  &= \sum_{k=1}^n \sum_{l\neq k}  F_{i,k} F_{j,l} (x^\top u_{k}) (x^\top u_l) + \sum_{k=1}^n  F_{i,k} F_{j,k} (x^\top u_{k})^2 \\
  &=\frac14 e^{\phi(x)} \left(\sum_{k=1}^n \sum_{l\neq k}  F_{i,k} F_{j,l} z_{k,l}(x) +  \sum_{k=1}^n  F_{i,k} F_{j,k} \big(z_k^2(x) + 2\big) \right),
\end{align*}
by which
\[
    \int_{\R^n} x_i x_j p(t,x) {\rm d}x = \frac14 \sum_{k=1}^n \sum_{l\neq k}  F_{i,k} F_{j,l} \mu_{k,l}(t) +  \frac14 \sum_{k=1}^n  F_{i,k} F_{j,k} \big(\mu_k^2(t) + 2\big)
\]
for all $t\in[0,t_1]$. Furthermore, observe that
\[
    y_i(t) = \int_{\R^n} x_i p(t,x){\rm d}x = \frac12 \sum_{k=1}^n  F_{i,k} \mu_k^1(t),\quad i=1,\ldots,n,
\]
and define
\begin{align*}
    \big(M_A(t)\big)_{k,l} &:= \begin{cases} \mu_{k,l}(t), & k\neq l,\\ 0, & k=l,\end{cases} \ \text{for $k,l=1,\ldots,n$},\quad M_B(t) := \diag\big(\mu_1^2(t),\ldots,\mu_n^2(t)\big),\\
    \mu^1(t) &:= \big(\mu_1^1(t),\ldots,\mu_n^1(t)\big)^\top
\end{align*}
for $t\in[0,t_1]$. Then, the covariance matrix admits the representation
\begin{align*}
  {\rm Cov}(t) &= \int_{\R^n} \big(x-y(t)\big)\big(x-y(t)\big)^\top p(t,x){\rm d} x = \int_{\R^n} x x^\top p(t,x){\rm d} x - y(t)y(t)^\top \\
  &= \frac14 F \left(M_A(t) + M_B(t) + 2I - \mu^1(t) \mu^1(t)^\top \right) F^\top.
\end{align*}
We set $P(t) := M_A(t) + M_B(t) - \mu^1(t) \mu^1(t)^\top$ and {by using the equations derived above and accordingly rearranging the terms we may} compute the derivative as
\begin{align*}
  \dot P(t) &= -c (\Lambda M_A(t) + M_A(t) \Lambda) + 2 \big(\mu^1(t) g(u(t))^\top VR + (VR)^\top g(u(t)) \mu^1(t)^\top\big)\\
  &\quad - 2c \Lambda M_B(t) \!-\! \big( 2 (VR)^\top g(u(t)) \!-\! c\Lambda \mu^1(t)\big) \mu^1(t)^\top \!-\! \mu^1(t) \big(  2 (VR)^\top g(u(t)) \!-\! c\Lambda \mu^1(t)\big)^\top\\
  &= - c(\Lambda P(t) + P(t) \Lambda),
\end{align*}
by which
\[
    P(t) = e^{-c\Lambda t} P(0) e^{-c\Lambda t},\quad t\in[0,t_1],
\]
and hence, invoking $F^\top = (VR)^{-1}$, the claim is shown.

The last statement follows from $e^{-c\Lambda t} \to 0$ for $t\to \infty$ and the observation that
\[
    \tfrac12 V R^{-2} V^\top = V \Lambda^{-1} V^\top = c \Gamma^{-1}.
\]
\end{proof}

Finally, we show boundedness of the mild solution on $\R_{\ge 0}$ for bounded inputs and disturbances satisfying condition~\eqref{eq:cond-dist}.

\begin{lemma}\label{lem:bounded-sln}
Use the assumptions from Definition~\ref{Def:solution}, further assume that $u\in L^\infty(0,\infty;\R^n)$ and~\eqref{eq:cond-dist} holds, and let~$p$ be a mild solution of~\eqref{eq:FPE-OU-dist} on $\R_{\ge 0}$. Then $p\in L^\infty(0,\infty;\fH)$.
\end{lemma}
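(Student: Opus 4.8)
The plan is to split off the one-dimensional kernel of $\fA$, on which the dynamics is trivial and the mass is conserved, and to exploit the spectral gap of $\fA$ on the complementary \emph{stable} subspace for the remaining part.

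First I would record that, by Proposition~\ref{Prop:eigenval-frakA}, $\fA$ has the simple eigenvalue $0$ with eigenfunction $e^{-\phi}$, while every other eigenvalue satisfies $\lambda_\alpha\ge\lambda_*:=\min_{k=1,\dots,n}\lambda_k>0$. Let $P_0$ be the orthogonal projection of $\fH$ onto $\spn\{e^{-\phi}\}$ and set $Q:=I-P_0$, $\fH_0:=Q\fH$. Since $T(t)e^{-\phi}=e^{-\phi}$, the projections commute with $(T(t))_{t\ge0}$ and $(T_{-1}(t))_{t\ge0}$, and on $\fH_0$ the contraction semigroup decays exponentially, $\|T(t)|_{\fH_0}\|_{\cB(\fH_0;\fH_0)}\le e^{-c\lambda_* t}$. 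Testing the weak formulation~\eqref{eq:weak-repr} with $v=e^{-\phi}$ (so $\fA v=0$), using that the control term vanishes exactly as in the proof of Proposition~\ref{Prop:sln-prop}\,(i) and that $\langle d(t),e^{-\phi}\rangle_\fH=0$ by~\eqref{eq:cond-dist}, shows $\langle p(t),e^{-\phi}\rangle_\fH=\langle p_0,e^{-\phi}\rangle_\fH$ for all $t\ge0$; hence $P_0p(t)$ is constant, and it suffices to bound $q(t):=Qp(t)$.

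Applying $Q$ to the mild-solution identity~\eqref{eq:mild_solution} and noting that both $\fB_1F(p,u)$ (divergence form, cf.\ Proposition~\ref{Prop:eigenval-A}\,(ii)) and $\fB_2d$ (by~\eqref{eq:cond-dist}) already lie in the stable part, I obtain
\[
    q(t)=T(t)Qp_0+\int_0^t T_{-1}(t-s)\fB_1F\big(p(s),u(s)\big)\,{\rm d}s+\int_0^t T(t-s)d(s)\,{\rm d}s .
\]
The first term is bounded by $\|p_0\|_\fH$ and the third by $(c\lambda_*)^{-1}\|d\|_{L^\infty(0,\infty;\fH)}$, both uniformly in $t$, via the exponential decay on $\fH_0$. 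For the middle term I would combine the finite-time $L^2$-admissibility of $\fB_1$ (Lemma~\ref{Lem:admiss}) with this decay, splitting $[0,t]$ into unit subintervals, to derive a weighted admissibility estimate
\[
   \Big\|\int_0^t T_{-1}(t-s)\fB_1 w(s)\,{\rm d}s\Big\|_\fH^2\le C\int_0^t e^{-\eta(t-s)}\|w(s)\|_{\fH^n}^2\,{\rm d}s
\]
for suitable $C,\eta>0$; inserting $w=F(p,u)=p\,g(u)$, the bound $\|F(p,u)(s)\|_{\fH^n}\le\bar g\|u\|_{L^\infty}\|p(s)\|_\fH$ from~\eqref{eq:HGP}, and $\|p(s)\|_\fH^2=\|P_0p\|_\fH^2+\|q(s)\|_\fH^2$ yields a Gronwall inequality of the form $\|q(t)\|_\fH^2\le K_1+K_2\int_0^t e^{-\eta(t-s)}\|q(s)\|_\fH^2\,{\rm d}s$.

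The main obstacle is that this crude estimate closes to a \emph{uniform} bound only if the bilinear gain $K_2\sim\bar g^2\|u\|_{L^\infty}^2/c$ is dominated by the decay rate $\eta$, which cannot be assumed. To treat the bilinear term without a smallness condition one must exploit its convection structure: an energy estimate for $\|q\|_\fH^2$ turns $\langle\fB_1F(p,u),q\rangle$, after integration by parts, into the weighted expression $\tfrac1{2c}\int_{\R^n}e^{\phi}q^2\,x^\top\Gamma g(u)\,{\rm d}x$, whose linearly growing weight cannot be absorbed by the coercive part, since a direct computation gives $\fa(q,q)=\int_{\R^n}e^{\phi}|\nabla q|^2\,{\rm d}x-\tfrac{\tr\Gamma}{c}\|q\|_\fH^2$, so that the $\|x\|^2q^2$-contribution cancels. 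I therefore expect the crux to be the control of this weighted term via the boundedness of the first moment $\int_{\R^n}x\,p(t,x)\,{\rm d}x$, which is governed by the asymptotically stable moment ODE obtained in the proof of Proposition~\ref{Prop:sln-prop}\,(iii); equivalently, one passes to the moving frame $\zeta$ with $\dot\zeta=g(u)-\Gamma\zeta$ (bounded since $\Gamma>0$ and $u\in L^\infty$), which removes the control from the drift and reduces the problem to the autonomous, exponentially stable equation, after which the bounded shift is used to control the weighted $\fH$-norm.
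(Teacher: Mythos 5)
Your preparatory steps are sound -- the kernel/stable-part splitting, the mass-conservation argument via $v=e^{-\phi}$, and the exponentially weighted admissibility estimate are all correct, and you honestly diagnose that both of your quantitative attempts fail without a smallness condition relating $\bar g\|u\|_{L^\infty}$ to the spectral gap $c\lambda_*$. The problem is that the rescue you propose does not close the gap. The moving frame $\xi = x-\zeta(t)$ with $\dot\zeta = g(u)-\Gamma\zeta$ does convert the PDE into the autonomous equation $\dot{\tilde p} = -c\fA\tilde p$ (plus a shifted disturbance), but the state space $\fH = L^2(\R^n;e^{\phi})$ is \emph{not} translation invariant: since $\phi(\xi+\zeta) = \phi(\xi) + \tfrac1c\,\xi^\top\Gamma\zeta + \phi(\zeta)$, the ratio of the weights $e^{\phi(\cdot+\zeta)}/e^{\phi}$ is unbounded in $\xi$ for every $\zeta\neq 0$. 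Hence a uniform bound on $\|\tilde p(t)\|_{L^2(\R^n;e^{\phi})}$ in the moving frame yields no bound on $\|p(t)\|_\fH$ in the original frame, no matter how small the bounded shift $\zeta(t)$ is; for the same reason $\|d(t,\cdot+\zeta(t))\|_{L^2(\R^n;e^{\phi})}$ is not controlled by $\|d(t)\|_\fH$. Repairing this would require uniform estimates in a strictly stronger weight such as $e^{(1+\delta)\phi}$, where the semigroup is no longer a contraction and where $p_0\in\fH$ need not lie -- so this is a genuine gap in the argument, not a technicality.

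The missing idea -- and the mechanism of the paper's proof -- is spectral but \emph{mode-resolved} rather than gap-based. Expanding $p(t) = \sum_\alpha \beta_\alpha(t) w_\alpha$ in the orthonormal Hermite eigenbasis, one computes $e^{-\phi}\nabla\big(e^{\phi}w_\alpha\big) = \sum_{j=1}^n \sqrt{2\alpha_j}\, w_{\alpha^{-j}} u_j$, so the bilinear term couples mode $\alpha$ only \emph{downward}, to the modes $\alpha^{-j}$ of lower total degree, with coefficient $\sqrt{2\alpha_j}\, g(u(t))^\top u_j = O\big(\sqrt{\lambda_\alpha}\big)$, whereas the dissipation of mode $\alpha$ is $-c\lambda_\alpha$. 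After symmetrizing the cross terms in $\ddt\|p_k(t)\|_\fH^2$ for the truncations $p_k$ and using $\|u_j\|_{\R^n} = \sqrt{\lambda_j/2}$, each mode carries the net coefficient $\eta(\alpha) = \sum_{j}\big(-c\lambda_j\alpha_j + \|g(u)\|_\infty(\sqrt{\lambda_j\alpha_j} + \tfrac12\sqrt{\lambda_j})\big)$, which is negative for all but finitely many $\alpha$ \emph{regardless of the size of} $\|g(u)\|_\infty$, since $\sqrt{\alpha_j}$ loses against $\alpha_j$. This is exactly what defeats the smallness obstruction you correctly identified: the bilinear perturbation is only of order $\fA^{1/2}$, so the high modes are dissipative no matter how large the input. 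The finitely many low modes are then bounded by induction along the triangular ODE hierarchy $\dot\beta_\alpha = -c\lambda_\alpha\beta_\alpha + \sum_j \sqrt{2\alpha_j}\, g(u)^\top u_j\, \beta_{\alpha^{-j}} + \langle d, w_\alpha\rangle_\fH$, starting from the conserved coefficient $\beta_0$ (your mass-conservation step, using~\eqref{eq:cond-dist}), after which Gr\"onwall's lemma gives a bound on $\|p_k(t)\|_\fH$ uniform in $k$ and $t$, and $k\to\infty$ concludes.
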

\begin{proof} Recall the orthonormal basis $w_\alpha = c_\alpha^{-1} e^{-\phi} H_\alpha$ of~$\fH$ and the constants $c_\alpha = \|e^{-\phi} H_\alpha\|_\fH$ from the proof of Proposition~\ref{Prop:sln-prop}. Then we have that
\[
    p(t) = \sum_{\alpha\in (\N_0)^n} \beta_\alpha(t) w_\alpha,\quad \beta_\alpha(t) = \langle p(t), w_\alpha\rangle_\fH,\quad t\ge 0.
\]
Furthermore, for $\alpha\in (\N_0)^n$ and $i=1,\ldots,n$ we define the multi-index $\alpha^{-i}$ by
\[
    \alpha^{-i}_j := \begin{cases} \alpha_j, & j\neq i,\\ \alpha_i-1, & j=i,\end{cases}\quad j=1,\ldots,n,
\]
which may have an entry which is $-1$. If $\alpha^{-i}_i = -1$, then we define $\beta_{\alpha^{-i}}(t) := 0$. Furthermore, by the properties of the Hermite polynomials, we have that
\begin{equation}\label{eq:c-alpha}
    2 \alpha_i c_{\alpha^{-i}}^2 = c_\alpha^2.
\end{equation}
Now, fix $k\in\N$ and define 
\[
    {p_k(t) := \sum_{|\alpha|\le k} \beta_\alpha(t) w_\alpha.}
\]
Then, similar as in the proof of Proposition~\ref{Prop:sln-prop}, we may compute that
\begin{align*}
    \dot \beta_\alpha(t) = -c\lambda_\alpha \beta_\alpha(t) + \langle p(t) g(u(t)), e^{-\phi}\nabla(e^{\phi} w_\alpha)\rangle_{{\fH^n}} + \langle d(t), w_\alpha\rangle_\fH.
\end{align*}
Furthermore, we find
\begin{align*}
 &e^{-\phi(x)} \nabla\big(e^{\phi(x)} w_\alpha(x)\big) = c_\alpha^{-1} e^{-\phi(x)} \nabla H_\alpha(x) \\
 &\stackrel{\eqref{eq:nabla-Halpha}}{=} 2 c_\alpha^{-1} e^{-\phi(x)} \sum_{j=1}^n \left( \prod_{i\neq j} H_{\alpha_i}(u_i^\top x)\right) \alpha_j H_{\alpha_j-1}(u_j^\top x) u_{j}  
 = 2 c_\alpha^{-1} \sum_{j=1}^n \alpha_j e^{-\phi(x)} H_{\alpha^{-j}}(x) u_{j} \\
 & \stackrel{\eqref{eq:c-alpha}}{=}  2 c_\alpha^{-1} \sum_{j=1}^n \alpha_j \frac{c_\alpha}{\sqrt{2\alpha_j}} w_{\alpha^{-j}}(x) u_{j} 
= \sum_{j=1}^n \sqrt{2\alpha_j}  w_{\alpha^{-j}}(x) u_{j},
\end{align*}
which gives that
\begin{equation}\label{eq:beta-dot}
 \dot \beta_\alpha(t) = -c\lambda_\alpha \beta_\alpha(t) + \sum_{j=1}^n \sqrt{2\alpha_j}  g(u(t))^\top u_{j}  \beta_{\alpha^{-j}}(t) + \langle d(t), w_\alpha\rangle_\fH.
\end{equation}
By Parseval's identity we have that
\[
    \|p_k(t)\|_\fH^2 = \sum_{|\alpha|\le k} \beta_\alpha(t)^2 \le \sum_{\alpha\in(\N_0)^n} \beta_\alpha(t)^2 = \|p(t)\|_\fH^2,\quad t\ge 0,
\]
and hence, using the notation $\|g(u)\|_\infty := \|g(u)\|_{L^\infty(0,\infty;\R^n)}$ and $\|d\|_\infty := \|d\|_{L^\infty(0,\infty;\fH)}$ {as well as recalling that $\|u_j\|_{\R^n} = \sqrt{\lambda_j/2}$, we find that}
\begin{align*}
  &\tfrac12 \ddt \|p_k(t)\|_\fH^2 = \sum_{|\alpha|\le k} \left( -c\lambda_\alpha \beta_\alpha(t)^2 + \sum_{j=1}^n \sqrt{2\alpha_j}  g(u(t))^\top u_{j} \beta_\alpha(t) \beta_{\alpha^{-j}}(t)\right) + \langle d(t), p_k(t)\rangle_\fH\\
  &\le \sum_{|\alpha|\le k} \sum_{j=1}^n \left( -c\lambda_j \alpha_j \beta_\alpha(t)^2 +  \sqrt{2\alpha_j} \|g(u)\|_\infty \|u_j\|_{\R^n} \beta_\alpha(t) \beta_{\alpha^{-j}}(t)\right) + \|d\|_\infty \|p_k(t)\|_\fH\\
  &\le \sum_{|\alpha|\le k} \sum_{j=1}^n \left( -c\lambda_j \alpha_j \beta_\alpha(t)^2 +  \tfrac12 \sqrt{\lambda_j \alpha_j} \|g(u)\|_\infty \big( \beta_\alpha(t)^2 +  \beta_{\alpha^{-j}}(t)^2\big)\right) + \|d\|_\infty \|p_k(t)\|_\fH\\
  &= \sum_{|\alpha|\le k} \sum_{j=1}^n \left( -c\lambda_j \alpha_j  +  \tfrac12  \|g(u)\|_\infty \left(\sqrt{\lambda_j \alpha_j} + \sqrt{\lambda_j (\alpha_j+1)}\right)\right)\beta_\alpha(t)^2\\
  &\quad -  \sum_{j=1}^n \sum_{|\alpha|\le k,\, \alpha_j=k} \tfrac12  \|g(u)\|_\infty \sqrt{\lambda_j (k+1)} \beta_\alpha(t)^2 + \|d\|_\infty \|p_k(t)\|_\fH\\
  &\le \sum_{|\alpha|\le k} \sum_{j=1}^n \left( -c\lambda_j \alpha_j  +  \|g(u)\|_\infty \left(\sqrt{\lambda_j \alpha_j} + \tfrac12 \sqrt{\lambda_j}\right)\right)\beta_\alpha(t)^2 + \|d\|_\infty \|p_k(t)\|_\fH.
\end{align*}
Define
\[
    \eta:(\N_0)^n \to \R,\ \alpha \mapsto \sum_{j=1}^n \left( -c\lambda_j \alpha_j  +  \|g(u)\|_\infty \left(\sqrt{\lambda_j \alpha_j} + \tfrac12 \sqrt{\lambda_j}\right)\right),
\]
then it is clear that there exists $k_0\in\N$ such that $\eta(\alpha) < 0$ for all $\alpha\in(\N_0)^n$ with $|\alpha|>k_0$. W.l.o.g.\ we may choose $k_0$ large enough so that
\[
    \frac{\partial \eta}{\partial \alpha_j}(\alpha) = -c \lambda_j + \frac{\|g(u)\|_\infty \lambda_j}{2 \sqrt{\lambda_j \alpha_j}} < 0
\]
for all $\alpha\in(\N_0)^n$ with $\alpha_j>k_0$ and all $j=1,\ldots,n$. Then we have that
\begin{align*}
    \eta_0 &:= \sup\setdef{\eta(\alpha)}{\alpha\in(\N_0)^n, |\alpha|>k_0} \\
    &\le \max\setdef{\eta(\alpha)}{\alpha\in(\N_0)^n, |\alpha|>k_0,\ \forall\, j=1,\ldots,n:\ \alpha_j \le k_0+1} < 0.
\end{align*}
With $\kappa_1(t):= \sum_{|\alpha|\le k_0} \eta(\alpha) \beta_\alpha(t)^2$ we obtain, for all $k>k_0$,
\begin{align*}
  \tfrac12 \ddt \|p_k(t)\|_\fH^2 &\le \kappa_1(t) +  \sum_{k_0<|\alpha|\le k} \eta(\alpha) \beta_\alpha(t)^2 + \|d\|_\infty \|p_k(t)\|_\fH\\
  &\le \eta_0\left( \sum_{|\alpha|\le k} \beta_\alpha(t)^2 - \sum_{|\alpha|\le k_0} \beta_\alpha(t)^2\right) + \kappa_1(t) + \frac{\|d\|_\infty^2}{2|\eta_0|} + \frac{|\eta_0|}{2}\|p_k(t)\|_\fH^2\\
  &\le \frac{\eta_0}{2} \|p_k(t)\|_\fH^2 + \kappa_1(t) + \kappa_2(t) + \frac{\|d\|_\infty^2}{2|\eta_0|}
\end{align*}
for all $t\ge 0$, where $\kappa_2(t):= - \eta_0 \sum_{|\alpha|\le k_0} \beta_\alpha(t)^2$. To conclude the proof we show that $\beta_\alpha(\cdot)$ is bounded for all $\alpha\in (\N_0)^n$. To this end, observe that for $\alpha = 0 = (0,\ldots,0)$ we have
\[
    \dot \beta_0(t) = - c\underbrace{\lambda_0}_{=0} \beta_0(t)+ \langle p(t) g(u(t)), e^{-\phi}\underbrace{\nabla(e^{\phi} w_0)}_{=\nabla H_0 = 0}\rangle_\fH + \underbrace{\langle d(t), w_0\rangle_\fH}_{\stackrel{\eqref{eq:cond-dist}}{=} 0} = 0,
\]
thus $\beta_0(t) = \langle p_0, w_0\rangle_\fH$ for all $t\ge 0$. Then a simple induction based on~\eqref{eq:beta-dot} and invoking boundedness of~$d$ shows that $\beta_\alpha\in L^\infty(0,\infty;\R)$ for all $\alpha\in(\N_0)^n$. Therefore, boundedness of~$\kappa_1$ and~$\kappa_2$ follows, which yields that
\[
    \ddt \|p_k(t)\|_\fH^2 \le \eta_0 \|p_k(t)\|_\fH^2 + M,\quad M:= 2\|\kappa_1 + \kappa_2\|_{L^\infty(0,\infty;\R)} + \frac{\|d\|_\infty^2}{|\eta_0|}
\]
for all $t\ge 0$. Then Gr\"{o}nwall's lemma implies that, for all $k>k_0$,
\begin{align*}
  \forall\, t\ge 0:\ \|p_k(t)\|_\fH^2 \le \|p_k(0)\|_\fH^2 e^{\eta_0 t} + \frac{M}{|\eta_0|} \le \|p_0\|_\fH^2 + \frac{M}{|\eta_0|} =: \tilde M,
\end{align*}
by which $\|p(t)\|_\fH^2 = \lim_{k\to\infty} \|p_k(t)\|_\fH^2 \le \tilde M$ for all $t\ge 0$.
\end{proof}

\section{A simple feedforward controller}\label{Sec:NonRobCon}

In this section we present a very simple, yet effective feedforward control strategy. We stress that the presented control law does not achieve the control objective -- it is not robust and does not guarantee error evolution within the prescribed performance funnel. Nevertheless, we will show that it guarantees fast (exponential) convergence of the tracking error to zero, provided the system parameters are known, the nonlinearity~$g$ is the identity, no disturbances are present and the derivative of the reference signal is available to the controller. For $\Gamma = \Gamma^\top>0$ as in~\eqref{eq:FPE-OU} and reference signal $y_{\rm ref}\in W^{1,\infty}(\R_{\ge 0};\R^n)$ the controller is given by
\begin{equation}\label{eq:non-rob-con}
    u(t) = \dot y_{\rm ref}(t) + \Gamma y_{\rm ref}(t).
\end{equation}
Note that~\eqref{eq:non-rob-con} is not a feedback controller, it is completely determined by~$y_{\rm ref}$. We show that~\eqref{eq:FPE-OU-dist} with~\eqref{eq:non-rob-con} admits a solution.

\begin{proposition}\label{Prop:non-rob-con}
Use the assumptions from Definition~\ref{Def:solution} such that $g = {\rm id}_{\R^n}$ and $d=0$, and let $y_{\rm ref}\in W^{1,\infty}(\R_{\ge 0};\R^n)$. Then there exists a unique mild solution~$p$ of~\eqref{eq:FPE-OU-dist} with~\eqref{eq:non-rob-con} on~$\R_{\ge 0}$ such that
\begin{enumerate}
  \item $p\in L^q_{\loc}(0,\infty;\fV) \cap W^{1,q}_{\loc}(0,\infty;\fH_{-{\frac12}})\cap L^\infty(0,\infty;\fH)$ for all $1\le q<2$  and
  \item for the output~$y$ defined in~\eqref{eq:output} and $P_0 := \int_{\R^n} p_0(x){\rm d} x$ we have that
  \[
    \forall\, t\ge 0:\ y(t) = P_0 y_{\rm ref}(t) + e^{-\Gamma t} \big(y(0) - P_0 y_{\rm ref}(0)\big).
  \]
\end{enumerate}
Furthermore,~$p$ exhibits the properties derived in Lemma~\ref{Lem:sln-weak} and Proposition~\ref{Prop:sln-prop}.
\end{proposition}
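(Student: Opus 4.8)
The decisive structural feature to exploit is that $g=\mathrm{id}_{\R^n}$, so the nonlinearity $F(p,u)=p\cdot g(u)=p\cdot u$ is \emph{linear} in the state, and \eqref{eq:FPE-OU-dist} with \eqref{eq:non-rob-con} is really a non-autonomous \emph{linear} evolution equation $\dot p(t)=-c\fA p(t)+\fB_1\big(p(t)\,u(t)\big)$ driven by the a priori known signal $u=\dot y_{\rm ref}+\Gamma y_{\rm ref}$. First I would note that $y_{\rm ref}\in W^{1,\infty}(\R_{\ge0};\R^n)$ gives $u\in L^\infty(0,\infty;\R^n)$; since $u$ need not be continuous one works with the variation-of-constants formula \eqref{eq:mild_solution} for bounded measurable inputs (the proofs of the results quoted below only ever use $g(u)\in L^\infty$, not continuity). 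Using $\|p(s)u(s)\|_{\fH^n}=\|u(s)\|_{\R^n}\,\|p(s)\|_\fH$ together with the $L^2$-admissibility of $\fB_1$ from Lemma~\ref{Lem:admiss}, the affine map $p\mapsto T(\cdot)p_0+\int_0^{\cdot}T_{-1}(\cdot-s)\fB_1\big(p(s)u(s)\big)\,{\rm d}s$ is a contraction on $C([0,\tau];\fH)$ for $\tau$ small, so Banach's fixed point theorem (equivalently, the bilinear framework of~\cite{HosfJaco20}) yields local existence and uniqueness; Lemma~\ref{lem:bounded-sln} with $d=0$ then precludes finite-time blow-up of $\|p(t)\|_\fH$ and extends the solution to $\R_{\ge0}$.

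For assertion (i) I would rerun the argument of Lemma~\ref{Lem:sln-weak}: with $u\in L^\infty$ and $p\in C([0,t_1];\fH)$ the forcing $f:=\fB_1 F(p,u)$ lies in $L^\infty(0,t_1;\fH_{-\frac12})\subset L^q$ for every $q$, so the maximal-regularity estimate applies unchanged and delivers $p\in L^q_{\loc}(0,\infty;\fV)\cap W^{1,q}_{\loc}(0,\infty;\fH_{-\frac12})$ for all $1\le q<2$ as well as the weak formulation \eqref{eq:weak-repr}; the bound $p\in L^\infty(0,\infty;\fH)$ is exactly Lemma~\ref{lem:bounded-sln}. The closing sentence of the proposition is then immediate, because the hypotheses of Lemma~\ref{Lem:sln-weak} and Proposition~\ref{Prop:sln-prop} are precisely those in force here (Definition~\ref{Def:solution} with $d=0$).

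Assertion (ii) is the heart of the matter and I would obtain it by reusing the computation from the proof of Proposition~\ref{Prop:sln-prop}(iii). Writing $z_k^1=e^{-\phi}H_1(u_k^\top\cdot)$ and $\mu_k^1(t)=\langle p(t),z_k^1\rangle_\fH$, the weak formulation \eqref{eq:weak-repr}, $\fA z_k^1=\lambda_k z_k^1$, Proposition~\ref{Prop:eigenval-A}(ii), $\nabla H_1(u_k^\top x)=2u_k$, and part~(i) of Proposition~\ref{Prop:sln-prop} (which gives $\int_{\R^n}p(t,x)\,{\rm d}x=P_0$ \emph{without} requiring $P_0=1$) yield
\[
    \dot\mu_k^1(t)=-c\lambda_k\mu_k^1(t)+2P_0\,u_k^\top g(u(t)),\qquad k=1,\dots,n.
\]
Collecting these into $\mu^1=(\mu_1^1,\dots,\mu_n^1)^\top$ and recalling $y(t)=\tfrac12 F\mu^1(t)$ with $F^\top=(VR)^{-1}$, a short computation using $g=\mathrm{id}$, $F(VR)^\top=I$ and $V\Lambda V^\top=c^{-1}\Gamma$ collapses the $n$ scalar equations to $\dot y(t)=-\Gamma y(t)+P_0\,u(t)=-\Gamma y(t)+P_0\big(\dot y_{\rm ref}(t)+\Gamma y_{\rm ref}(t)\big)$ for almost all $t$. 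For $e:=y-P_0 y_{\rm ref}$ the feedforward terms cancel, leaving $\dot e(t)=-\Gamma e(t)$, and integrating this linear ODE gives $e(t)=e^{-\Gamma t}e(0)$, which is the claimed identity.

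The step I expect to be genuinely delicate is existence and uniqueness, specifically reconciling the merely $L^\infty$ (not continuous) input produced by \eqref{eq:non-rob-con} with the continuity requirement built into Definition~\ref{Def:solution} and the cited bilinear theory. I expect this to be settled precisely by the linearity in $p$: either by extending the mild-solution concept to bounded measurable inputs, for which \eqref{eq:mild_solution} remains well posed by admissibility, or by approximating $u$ in $L^q$ by continuous signals and passing to the limit, the linear structure guaranteeing convergence of the associated mild solutions. Everything else reduces to results already established.
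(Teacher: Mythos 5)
Your proposal is correct and takes essentially the same route as the paper: the paper also obtains existence and uniqueness from the $L^2$-admissibility/bilinear framework of~\cite{HosfJaco20} (truncating the input to $\mathds{1}_{[0,t_1]}u\in L^2(0,\infty;\R^n)$ and citing its Lem.~2.8, with Lemma~\ref{lem:bounded-sln} applied on $[0,t_1]$ to rule out blow-up), gets~(i) from Lemmas~\ref{Lem:sln-weak} and~\ref{lem:bounded-sln}, and proves~(ii) by exactly your computation $\dot y(t)=-\tfrac{c}{2}F\Lambda\mu^1(t)+P_0F(VR)^\top u(t)=-\Gamma y(t)+P_0u(t)$ followed by $\ddt\big(y(t)-P_0y_{\rm ref}(t)\big)=-\Gamma\big(y(t)-P_0y_{\rm ref}(t)\big)$. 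The subtlety you flag --- $u$ from~\eqref{eq:non-rob-con} being only $L^\infty$ rather than continuous as in Definition~\ref{Def:solution} --- is handled in the paper precisely by your first suggested fix, namely working with the $L^2$-input mild-solution theory, for which admissibility keeps~\eqref{eq:mild_solution} well posed.
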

\begin{proof} We show existence and uniqueness of a mild solution. Let $t_1>0$ be arbitrary and define $\tilde u := \mathds{1}_{[0,t_1]} u$ for~$u$ as in~\eqref{eq:non-rob-con}. Then, since  $y_{\rm ref}\in W^{1,\infty}(\R_{\ge 0};\R^n)$, we have $\tilde u \in L^2(0,\infty;\R^n)$. Furthermore, $\fB_1$ is $L^2$-admissible by Lemma~\ref{Lem:admiss}, $g = {\rm id}_{\R^n}$ and {hence~\cite[Lem.~2.8]{HosfJaco20} together with Lemma~\ref{lem:bounded-sln} (applied, \textit{mutatis mutandis}, to the interval~$[0,t_1]$ instead of $\R_{\ge 0}$)}  yields the existence of a unique mild solution~$\tilde p_{t_1}$ of~\eqref{eq:FPE-OU-dist} with input~$\tilde u$ on~$[0,t_1]$. Define $p:\R_{\ge 0}\to\fH$ by $p|_{[0,t_1]} := \tilde p_{t_1}$ for any $t_1>0$, which is well-defined by uniqueness of~$\tilde p_{t_1}$. Then~$p$ is the unique mild solution of~\eqref{eq:FPE-OU-dist} with~\eqref{eq:non-rob-con} on~$\R_{\ge 0}$.

Statement (i) follows from Lemmas~\ref{Lem:sln-weak} and~\ref{lem:bounded-sln} together with $u\in L^\infty(0,\infty;\R^n)$ and $d=0$. It remains to show~(ii). Using the notation from the proof of Proposition~\ref{Prop:sln-prop} we find that $y(t) = \tfrac12 F \mu^1(t)$ and, invoking $g = {\rm id}_{\R^n}$,
\[
    \dot y(t) = -\frac{c}{2} F \Lambda \mu^1(t) + P_0 F (VR)^\top u(t) = -c F \Lambda F^{-1} y(t) + P_0 u(t), \quad t\ge 0,
\]
where we have used that $F^\top = (VR)^{-1}$. Recalling $c F \Lambda F^{-1} = c V \Lambda V^\top = \Gamma$, together with~\eqref{eq:non-rob-con} we now obtain that $\ddt \big( y(t) - P_0y_{\rm ref}(t)\big) = -\Gamma  \big( y(t) - P_0y_{\rm ref}(t)\big)$, from which the claim follows directly.
\end{proof}

We emphasize that the result of Proposition~\ref{Prop:non-rob-con} is independent of the initial value~$p_0\in\fH$. Moreover, if~$p_0$ satisfies $\int_{\R^n} p_0(x) {\rm d}x =1$, then the control~\eqref{eq:non-rob-con} achieves exponential convergence of the tracking error~$e(t) = y(t) -y_{\rm ref}(t)$ to zero for all initial probability densities. Furthermore, the mild solution~$p$ exhibits the properties derived in Proposition~\ref{Prop:sln-prop}; thus its mean value and covariance matrix exponentially converge to~$y_{\rm ref}$ and~$c\Gamma^{-1}$, resp.

Although the controller~\eqref{eq:non-rob-con} requires knowledge of~$\Gamma$ and~$\dot y_{\rm ref}$ and the absence of disturbances, its simplicity may justify its application in real-world examples. On the other hand, in the presence of uncertainties and disturbances, a feedback control strategy is more suitable, for which we refer to Section~\ref{Sec:FunCon}.

\section{Funnel control}\label{Sec:FunCon}

The controller that we propose in order to achieve the control objective formulated in Subsection~\ref{Ssec:ContrObj} is the funnel controller. It has the advantage that it is model-free, i.e., we may state the control law without any further information about the equation~\eqref{eq:FPE-OU}. Therefore, it is inherently robust and hence able to handle both uncertainties in the system parameters as well as disturbances in the PDE itself. In particular, we do not need any knowledge of the parameters~$c>0$,~$\Gamma\in\R^{n\times n}$ and $g\in C^1(\R^n;\R^n)$, or of the initial probability density~$p_0(\cdot)$. Furthermore, we seek robustness of the controller w.r.t.\ disturbances $d\in L^\infty(0,\infty;\fH)$ that satisfy the zero-mass condition~\eqref{eq:cond-dist}.

The proof of feasibility of funnel control strongly relies on showing that the output~\eqref{eq:output} corresponding to any mild solution of~\eqref{eq:FPE-OU-dist} satisfies the equation
\begin{equation}\label{eq:ouput-DGL}
    \dot y(t) = -\Gamma y(t) + P_0 g(u(t))+ \bar d(t),\ \ \text{where}\ \ P_0 = \int_{\R^n} p_0(x) {\rm d}x,\ \ \bar d(t) = \int_{\R^n} x d(t,x) {\rm d}x.
\end{equation}
Then this equation (under the funnel control feedback law stated below) may be solved separately and the resulting control input~$u$ may be inserted in~\eqref{eq:FPE-OU-dist}, which may be treated as an open-loop problem then for which~\cite{HosfJaco20} provides a solution. It can then be shown that this solution has the desired properties and the corresponding output generated via~\eqref{eq:output} equals~$y$ from~\eqref{eq:ouput-DGL}.

Utilizing the version of the funnel controller from~\cite{BergIlch20pp}, we only require the relative degree in order to state the appropriate control law. For finite dimensional systems we refer to~\cite{Isid95} for a definition of the relative degree; this notion can be extended to systems with infinite-dimensional internal dynamics, see e.g.~\cite{BergPuch20a}. However, for general infinite-dimensional systems a concept of relative degree is not available. Since the input appears explicitly in the equation~\eqref{eq:ouput-DGL} for~$\dot y$, this suggests that~\eqref{eq:FPE-OU-dist},~\eqref{eq:output} at least exhibits an input-output behavior similar to that of a relative degree one system. This justifies to investigate the application of the funnel controller
\begin{equation}\label{eq:fun-con}
u(t)=\big(N\circ \alpha\big)\big(\|w(t)\|_{\R^n}^2\big) w(t),\quad w(t) = \varphi(t)\big( y(t) - y_{\rm ref}(t)\big)
\end{equation}
to~\eqref{eq:FPE-OU-dist},~\eqref{eq:output}, where the funnel control design parameters are
\begin{equation}\label{eq:fun-con-design}
    \begin{cases}
     \ \varphi\in\Phi,\\
     \ \alpha\in C^1([0,1);[1,\infty))\ \ \text{a bijection},\\
     \ N\in C^1(\R_{\ge 0};\R)\ \ \text{a surjection};
    \end{cases}
\end{equation}
see~\cite{BergIlch20pp} for more details and explanations on the controller desgin. 
Typical choices for~$N$ and~$\alpha$ are $N(s) = s \cos s$ and $\alpha(s) = 1/(1-s)$.

For feasibility we seek to show that for any $y_{\rm ref}\in W^{1,\infty}(\R_{\ge 0};\R^n)$, a triple $(\varphi,\alpha,N)$ as in~\eqref{eq:fun-con-design}, a disturbance $d\in L^\infty(0,\infty;\fH)$ with~\eqref{eq:cond-dist} and any initial probability density~$p_0\in\fH$ such that $\varphi(0) \|e(0)\|_{\R^n} < 1$ we have that the closed-loop system consisting of~\eqref{eq:FPE-OU-dist},~\eqref{eq:output} and~\eqref{eq:fun-con} has a unique global and bounded mild solution~$p$ which satisfies the conditions~\eqref{eq:FPE-cond} and the tracking error~$e$ evolves uniformly within the performance funnel~$\cF_\varphi$ from~\eqref{eq:perf_funnel}.

Hence, even if a solution exists on a finite time interval $[0,t_1)$, it is not clear that it can be extended to a global solution. Moreover, the closed-loop system~\eqref{eq:FPE-OU-dist},~\eqref{eq:output} and~\eqref{eq:fun-con} is a time-varying and nonlinear PDE. This renders the solution of the above problem a challenging task.

Under the assumptions from Definition~\ref{Def:solution} and for $y_{\rm ref}\in W^{1,\infty}(\R_{\ge 0};\R^n)$ and a triple $(\varphi,\alpha,N)$ as in~\eqref{eq:fun-con-design}, we call $(p,u,y)$ a mild solution of~\eqref{eq:FPE-OU-dist},~\eqref{eq:output},~\eqref{eq:fun-con} on $[0,t_1]$, if $u,y\in C([0,t_1];\R^n)$ such that~\eqref{eq:output},~\eqref{eq:fun-con} hold for all $t\in [0,t_1]$ and~$p$ is a mild solution of~\eqref{eq:FPE-OU-dist} on $[0,t_1]$. A triple $(p,u,y)$ is called mild solution of~\eqref{eq:FPE-OU-dist},~\eqref{eq:output},~\eqref{eq:fun-con} on $\R_{\ge 0}$, if $(p,u,y)|_{[0,t_1]}$ is a mild solution of~\eqref{eq:FPE-OU-dist},~\eqref{eq:output},~\eqref{eq:fun-con} on $[0,t_1]$ for all $t_1>0$.

In the following main result of the present paper we prove feasibility of funnel control for the Fokker-Planck equation corresponding to the multi-dimensional Ornstein-Uhlenbeck process.

\begin{theorem}\label{Thm:fun-FPE}
Use the assumptions from Definition~\ref{Def:solution} (except for that on~$u$) and let $y_{\rm ref}\in W^{1,\infty}(\R_{\ge 0};\R^n)$, $(\varphi,\alpha,N)$ be a triple of funnel control design parameters as in~\eqref{eq:fun-con-design} and $E_0 := \int_{\R^n}  x p_0(x) {\rm d}x$, and assume that~$d$ satisfies~\eqref{eq:cond-dist},
\[
   P_0 = \int_{\R^n} p_0(x) {\rm d}x \neq 0\quad \text{and} \quad \varphi(0)\|E_0 - y_{\rm ref}(0)\|_{\R^n} < 1.
\]
Then there exists a unique mild solution $(p,u,y)$ of~\eqref{eq:FPE-OU-dist},~\eqref{eq:output},~\eqref{eq:fun-con} on $\R_{\ge 0}$ which satisfies
\begin{enumerate}
  \item $p\in L^q_{\loc}(0,\infty;\fV) \cap W^{1,q}_{\loc}(0,\infty;\fH_{-{\frac12}})\cap L^\infty(0,\infty;\fH)$ for all $1\le q<2$, $u\in C(\R_{\ge 0};\R^n) \cap L^{\infty}(\R_{\ge 0};\R^n)$, $y\in W^{1,\infty}(\R_{\ge 0};\R^n)$ and
  \item $\exists\,\eps\in(0,1)\ \forall\, t\ge 0:\ \varphi(t) \|e(t)\|_{\R^n} \le \eps$.
\end{enumerate}
Furthermore,~$p$ has the properties derived in Lemma~\ref{Lem:sln-weak} and, if $d=0$, Proposition~\ref{Prop:sln-prop}.
\end{theorem}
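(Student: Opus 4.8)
The plan is to decouple the infinite-dimensional state equation from the finite-dimensional dynamics of the output, exploiting that the input enters the output equation \eqref{eq:ouput-DGL} explicitly (relative degree one). The first step is to establish \eqref{eq:ouput-DGL} rigorously for the output of \emph{any} mild solution. Arguing as in the proof of Proposition~\ref{Prop:sln-prop}\,(iii), I would test the weak formulation \eqref{eq:weak-repr} against the eigenfunctions $z_k^1 = e^{-\phi}H_1(u_k^\top x)$, use $\fA z_k^1 = \lambda_k z_k^1$ and $\nabla(e^{\phi}z_k^1) = 2u_k$, and incorporate the disturbance term $\langle d(t),z_k^1\rangle_\fH = 2u_k^\top\!\int_{\R^n} x\,d(t,x)\,{\rm d}x$ to obtain $\dot\mu_k^1(t) = -c\lambda_k\mu_k^1(t) + 2P_0\,u_k^\top g(u(t)) + \langle d(t),z_k^1\rangle_\fH$. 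Assembling these via $y_i = \tfrac12\sum_k F_{i,k}\mu_k^1$ and recalling $cF\Lambda F^{-1} = \Gamma$ and $F^\top = (VR)^{-1}$ yields \eqref{eq:ouput-DGL}, in which $\bar d(t) = \int_{\R^n} x\,d(t,x)\,{\rm d}x$ is well defined and bounded since, by Cauchy--Schwarz, $\|\bar d(t)\|_{\R^n}\le\big(\int_{\R^n}\|x\|_{\R^n}^2 e^{-\phi(x)}\,{\rm d}x\big)^{1/2}\|d(t)\|_\fH$, the Gaussian weight $e^{-\phi}$ rendering the first factor finite, so that $\bar d\in L^\infty(0,\infty;\R^n)$.

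Next I would treat \eqref{eq:ouput-DGL} under the feedback \eqref{eq:fun-con} as a self-contained funnel control problem for the finite-dimensional, nonlinear, time-varying ODE
\[
 \dot y(t) = -\Gamma y(t) + P_0\, g\!\left( (N\circ\alpha)\big(\varphi(t)^2\|y(t)-y_{\rm ref}(t)\|_{\R^n}^2\big)\varphi(t)\big(y(t)-y_{\rm ref}(t)\big)\right) + \bar d(t),\quad y(0)=E_0.
\]
Since $\bar d\in L^\infty(0,\infty;\R^n)$, $y_{\rm ref}\in W^{1,\infty}(\R_{\ge 0};\R^n)$, the scalar gain $P_0\neq 0$ is a nonzero constant, and $g$ satisfies the high-gain property \eqref{eq:HGP}, this falls into the framework of \cite{BergIlch20pp}: the supremum-minimum condition in \eqref{eq:HGP} is precisely what enables the switching function $N$ to cope with the unknown sign of $P_0$. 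Invoking the corresponding feasibility result furnishes a unique solution $y\in W^{1,\infty}(\R_{\ge 0};\R^n)$ on $\R_{\ge 0}$, the bounded input $u\in C(\R_{\ge 0};\R^n)\cap L^\infty(\R_{\ge 0};\R^n)$ defined through \eqref{eq:fun-con}, and assertion~(ii), namely $\varphi(t)\|e(t)\|_{\R^n}\le\eps$ for some $\eps\in(0,1)$.

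With the signal $u$ now frozen, I would substitute it into \eqref{eq:FPE-OU-dist}, which becomes an open-loop problem, linear and non-autonomous in $p$ because $F(p,u)=p\cdot g(u)$. The $L^2$-admissibility of $\fB_1$ from Lemma~\ref{Lem:admiss} together with \cite[Lem.~2.8]{HosfJaco20} yields a unique mild solution $p$ on every $[0,t_1]$, while Lemma~\ref{lem:bounded-sln} (using $u\in L^\infty$ and the zero-mass condition \eqref{eq:cond-dist}) gives $p\in L^\infty(0,\infty;\fH)$, and Lemma~\ref{Lem:sln-weak} supplies the remaining regularity in~(i). The crucial consistency step is to check that the output $\tilde y(t):=\int_{\R^n} x\,p(t,x)\,{\rm d}x$ coincides with the $y$ produced above: by Step~1 applied to this $p$, the function $\tilde y$ solves \eqref{eq:ouput-DGL} with the \emph{same} fixed forcing $P_0 g(u(t))+\bar d(t)$ and the same initial value $\tilde y(0)=E_0$; as $y$ solves the identical linear ODE, uniqueness forces $\tilde y=y$, so \eqref{eq:output} and \eqref{eq:fun-con} hold for the triple $(p,u,y)$. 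Uniqueness of the closed-loop solution follows by reversing the argument: any solution's output solves the closed-loop ODE, which has a unique solution, fixing $u$ and hence $p$. Finally, the asserted properties follow from Lemma~\ref{Lem:sln-weak} and, when $d=0$, Proposition~\ref{Prop:sln-prop}.

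I expect the main obstacle to be the clean logical separation between the ODE and PDE problems, specifically verifying that \eqref{eq:ouput-DGL} is an admissible system for the abstract funnel controller of \cite{BergIlch20pp}: one must confirm that the benign linear internal term $-\Gamma y$ and the bounded perturbation $\bar d$ do not destroy the relative-degree-one, high-gain structure, and that the switching mechanism in $N$ genuinely accommodates the possibly-negative constant gain $P_0$. By contrast, the consistency argument, although central, is delicate only insofar as it rests on uniqueness for \eqref{eq:ouput-DGL} once $u$ is frozen, which is immediate because the frozen forcing turns the equation into a linear ODE.
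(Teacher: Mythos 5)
Your proposal is correct and takes essentially the same route as the paper's proof: first solve the finite-dimensional funnel-control problem for \eqref{eq:ouput-DGL} under \eqref{eq:fun-con} via \cite{BergIlch20pp} (uniqueness coming from local Lipschitz continuity of the right-hand side, since $g,N,\alpha\in C^1$), then freeze the resulting bounded input, obtain the open-loop mild solution from the $L^2$-admissibility of $\fB_1$ (Lemma~\ref{Lem:admiss}) together with \cite[Lem.~2.8]{HosfJaco20} and Lemmas~\ref{Lem:sln-weak} and~\ref{lem:bounded-sln}, and close the loop by showing via the Hermite expansion that the mean of any mild solution satisfies \eqref{eq:ouput-DGL} with $\bar d(t)=\int_{\R^n} x\,d(t,x)\,{\rm d}x$, so uniqueness of the (frozen, linear) ODE identifies it with $y$. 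Your derivation of the disturbance term, the Cauchy--Schwarz bound on $\bar d$, and the consistency-plus-uniqueness argument correspond exactly to Steps~1--4 of the paper's proof.
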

\begin{proof} \emph{Step 1}: Consider the equation~\eqref{eq:ouput-DGL} with initial condition $y(0)=E_0$ and observe that $P_0 \neq 0$ by assumption, $\|\bar d(t)\|_{\R^n}\le \kappa \|d(t)\|_\fH \le \kappa \|d\|_{L^\infty(0,\infty;\fH)}$ for some $\kappa>0$ and  almost all $t\ge 0$ and $\varphi(0)\|y(0) - y_{\rm ref}(0)\|_{\R^n} < 1$. Therefore, by property~\eqref{eq:HGP} of~$g$, existence of a solution to~\eqref{eq:ouput-DGL} under the control~\eqref{eq:fun-con} follows from~\cite[Thm.~1.8]{BergIlch20pp}, that is there exists a function $y\in C(\R_{\ge 0};\R)$ which is absolutely continuous on $[0,t_1]$ for all $t_1>0$ and satisfies $y(0) =E_0$ and~\eqref{eq:ouput-DGL} together with~\eqref{eq:fun-con} for almost all $t\ge 0$. Moreover, we have that $u\in C(\R_{\ge 0};\R^n) \cap L^{\infty}(\R_{\ge 0};\R^n)$ and $y\in W^{1,\infty}(\R_{\ge 0};\R^n)$ as well as the estimate in~(ii).

\emph{Step 2}: We show uniqueness of the solution $(u,y)$ of~\eqref{eq:ouput-DGL} under~\eqref{eq:fun-con} on~$\R_{\ge 0}$. Assume that~$(u^1,y^1)$ and~$(u^2,y^2)$ are two solutions of~\eqref{eq:ouput-DGL},~\eqref{eq:fun-con} on $\R_{\ge 0}$ with the same initial values $y^1(0) = E_0 = y^2(0)$. Then $y^i$ is the solution of the initial value problem
\begin{align*}
    \dot y^i(t) &= -\Gamma y^i(t) + P_0 g(u^i(t)) + \bar d(t),\quad u^ i(t)=\big(N\circ \alpha\big)\big(\|w^i(t)\|_{\R^n}^2\big) w^i(t),\\
     w^i(t) &= \varphi(t)\big( y^i(t) - y_{\rm ref}(t)\big),\quad y^i(0) = E_0.
\end{align*}
Since the right hand side of the ordinary differential equation above is measurable in~$t$ and locally Lipschitz continuous in~$y^i$ (since $g$, $N$ and $\alpha$ are continuously differentiable), its solution is unique, see e.g.~\cite[$\S$\,10,\,Thm.\,XX]{Walt98}. Since $y^1(0) = y^2(0)$ this implies that $y^1(t) = y^2(t)$ and hence also $u^1(t) = u^2(t)$ for all $t\ge 0$.

\emph{Step 3}: We show that there exists a unique mild solution $(p,u,y)$ of~\eqref{eq:FPE-OU-dist},~\eqref{eq:output},~\eqref{eq:fun-con} on $\R_{\ge 0}$, where~$u,y$ are defined in Step~1. The arguments are analogous to those in the proof of Proposition~\ref{Prop:non-rob-con}, additionally using $\tilde d := \mathds{1}_{[0,t_1]} d \in L^2(0,\infty;\fH)$, observing that $\fB_2$ is clearly $L^2$-admissible and that~$F$ satisfies
\begin{align*}
   &\|F(v,w)\|_{\fH^n} = \|v\|_\fH \|g(w)\|_{\R^n} \stackrel{\eqref{eq:HGP}}{\le} \bar g \|v\|_\fH \|w\|_{\R^n}\quad \text{and}\\
   &\|F(v_1,w)-F(v_2,w)\|_{\fH^n} = \|v_1-v_2\|_\fH \|g(w)\|_{\R^n} \stackrel{\eqref{eq:HGP}}{\le} \bar g \|v_1-v_2\|_\fH \|w\|_{\R^n}
\end{align*}
for all $v,v_1,v_2\in\fH$ and $w\in\R^n$.  It remains to show that~\eqref{eq:output},~\eqref{eq:fun-con} are satisfied for all $t \ge 0$. To this end, it suffices to observe that, recalling the findings from the proof of Proposition~\ref{Prop:non-rob-con}, we obtain that the output given in~\eqref{eq:output} satisfies the equation~\eqref{eq:ouput-DGL} with
\[
   \bar d(t) = \frac12 F \begin{pmatrix} \langle d(t), e^{-\phi} H_1(u_1^\top x)\rangle_\fH\\ \vdots \\ \langle d(t), e^{-\phi} H_1(u_n^\top x)\rangle_\fH\end{pmatrix} = F  (VR)^\top  \begin{pmatrix} \langle d(t), e^{-\phi} x_1 \rangle_\fH\\ \vdots \\ \langle d(t), e^{-\phi} x_n\rangle_\fH\end{pmatrix} = \int_{\R^n} x d(t,x) {\rm d}x
\]
with~$F,V,R\in\R^{n\times n}$ as in the proof of Proposition~\ref{Prop:sln-prop}. Finally, together with uniqueness of $u,y$ from Step~2, we obtain a unique mild solution~$p$.

\emph{Step 4}: The remaining assertion on~$p$ in~(i) follows from Lemmas~\ref{Lem:sln-weak} and~\ref{lem:bounded-sln}.
\end{proof}

\section{A numerical example}\label{Sec:Sim}

In this section, we illustrate the applicability of the funnel controller by means of a numerical example. We consider the one-dimensional case $n=1$ and simulate the evolution of a given initial probability density~$p_0$ under the Fokker-Planck equation~\eqref{eq:FPE-OU-dist} with the mean value as output~\eqref{eq:output} and under the influence of the controller~\eqref{eq:fun-con}. To show the universality of Theorem~\ref{Thm:fun-FPE} we consider an initial density that is in~$\fH$, but not in~$\fV$, namely a uniform distribution on $\big[-1,-\tfrac12\big] \cup \big[\tfrac14, \tfrac34\big]$ given by
\[
    p_0:\R\to\R,\ x\mapsto \begin{cases} 1, & -1 \le x \le -\tfrac12\ \vee \tfrac14 \le x \le \tfrac34,\\ 0, & \text{otherwise}\end{cases}\quad \in \fH \setminus\fV.
\]
The parameters in~\eqref{eq:FPE-OU} are chosen as $c=0.1$, $\Gamma = 1$ and $g = {\rm id}_{\R}$, the reference signal is $y_{\rm ref}(t) = \sin t$ and the funnel control design parameters are $\alpha(s) = 1/(1-s)$, $N(s) = s\cos s$ and $\varphi(t) = \left(2 e^{-2t} + 0.1\right)^{-1}$, which satisfy~\eqref{eq:fun-con-design}. As disturbance we consider
\[
    d:\R_{\ge 0}\times\R\to \R,\ (t,x)\mapsto 3\cos (4t)\, x e^{-3x^2},
\]
which clearly satisfies $d\in L^\infty(0,\infty;\fH)$ and condition~\eqref{eq:cond-dist}. Since $E_0 = \int_{-\infty}^\infty x p_0(x) {\rm d}x = -\frac18$ and $y_{\rm ref}(0) = 0$, it follows that $\varphi(0)|E_0 - y_{\rm ref}(0)| = \frac{5}{84} < 1$. Therefore, feasibility of funnel control, i.e., the application of~\eqref{eq:fun-con} to~\eqref{eq:FPE-OU-dist},~\eqref{eq:output}, is guaranteed by Theorem~\ref{Thm:fun-FPE}.

\begin{figure}[h!tb]
\begin{subfigure}{.48\linewidth}
\begin{center}
  \includegraphics[scale=0.49]{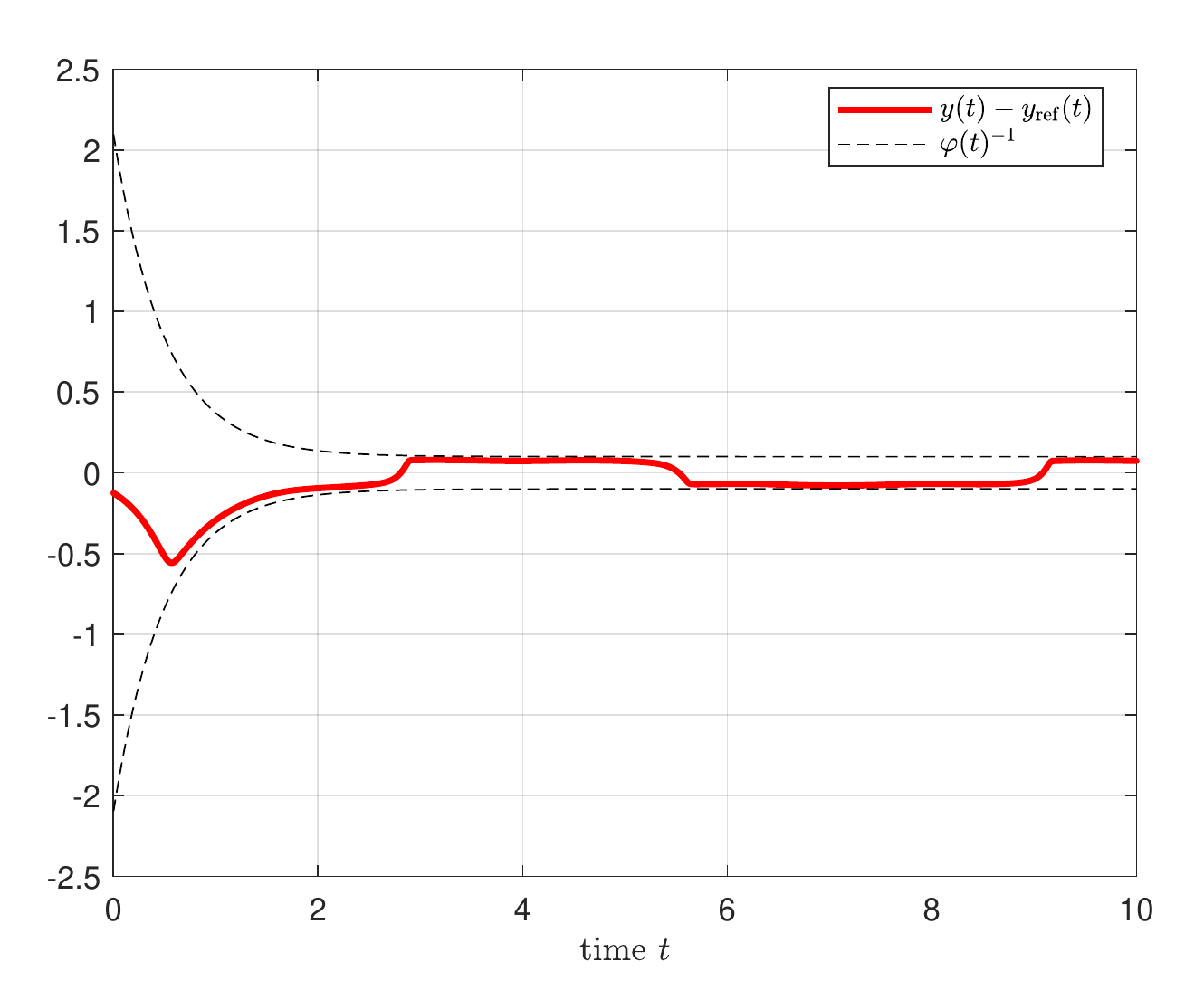}
  \caption{Tracking error and funnel boundary}
  \end{center}
\end{subfigure}\quad
\begin{subfigure}{.48\linewidth}
\begin{center}
  \includegraphics[scale=0.49]{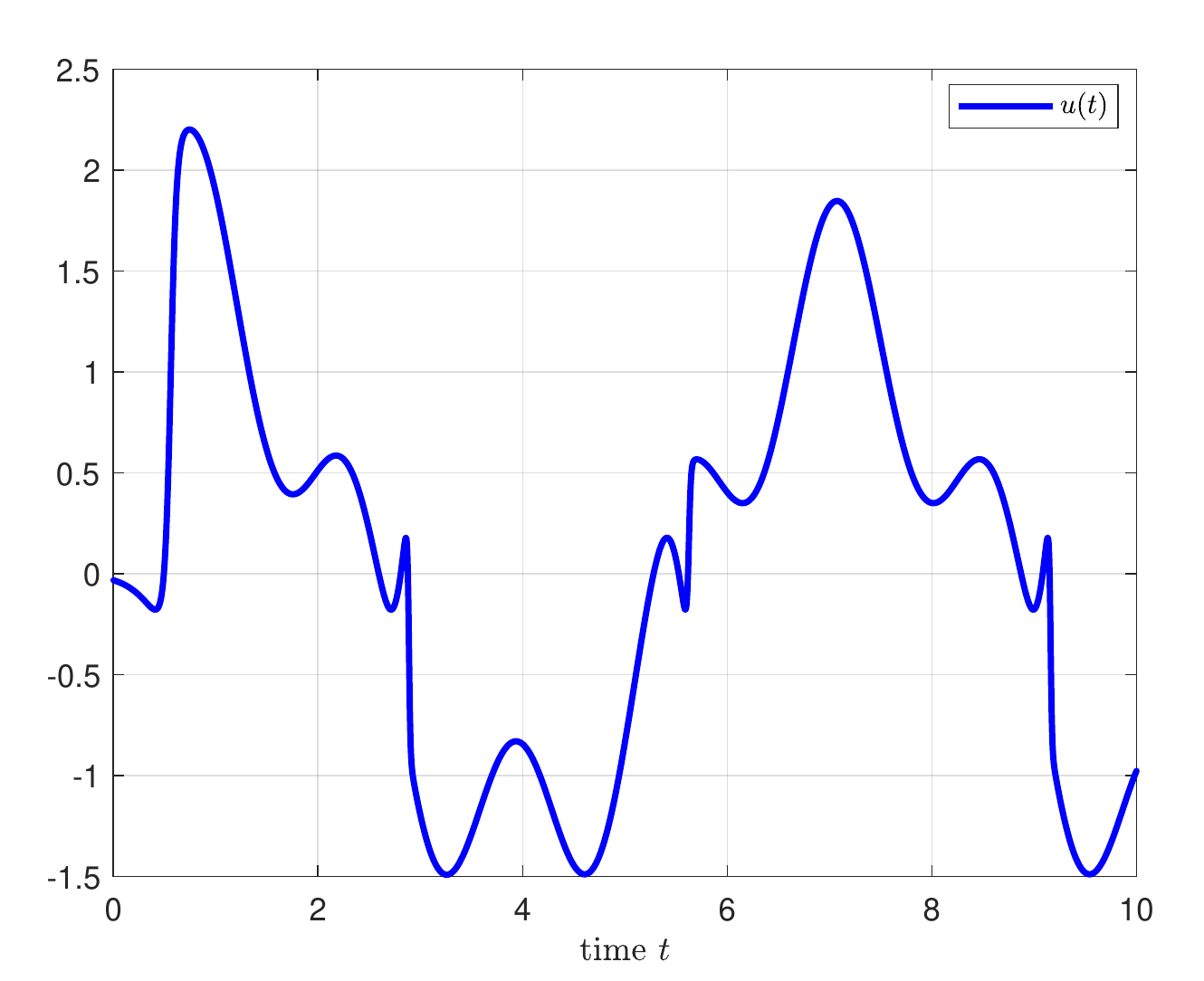}
  \caption{Input function}
  \end{center}
\end{subfigure}
\begin{subfigure}{.48\linewidth}
\begin{center}
  \includegraphics[scale=0.49]{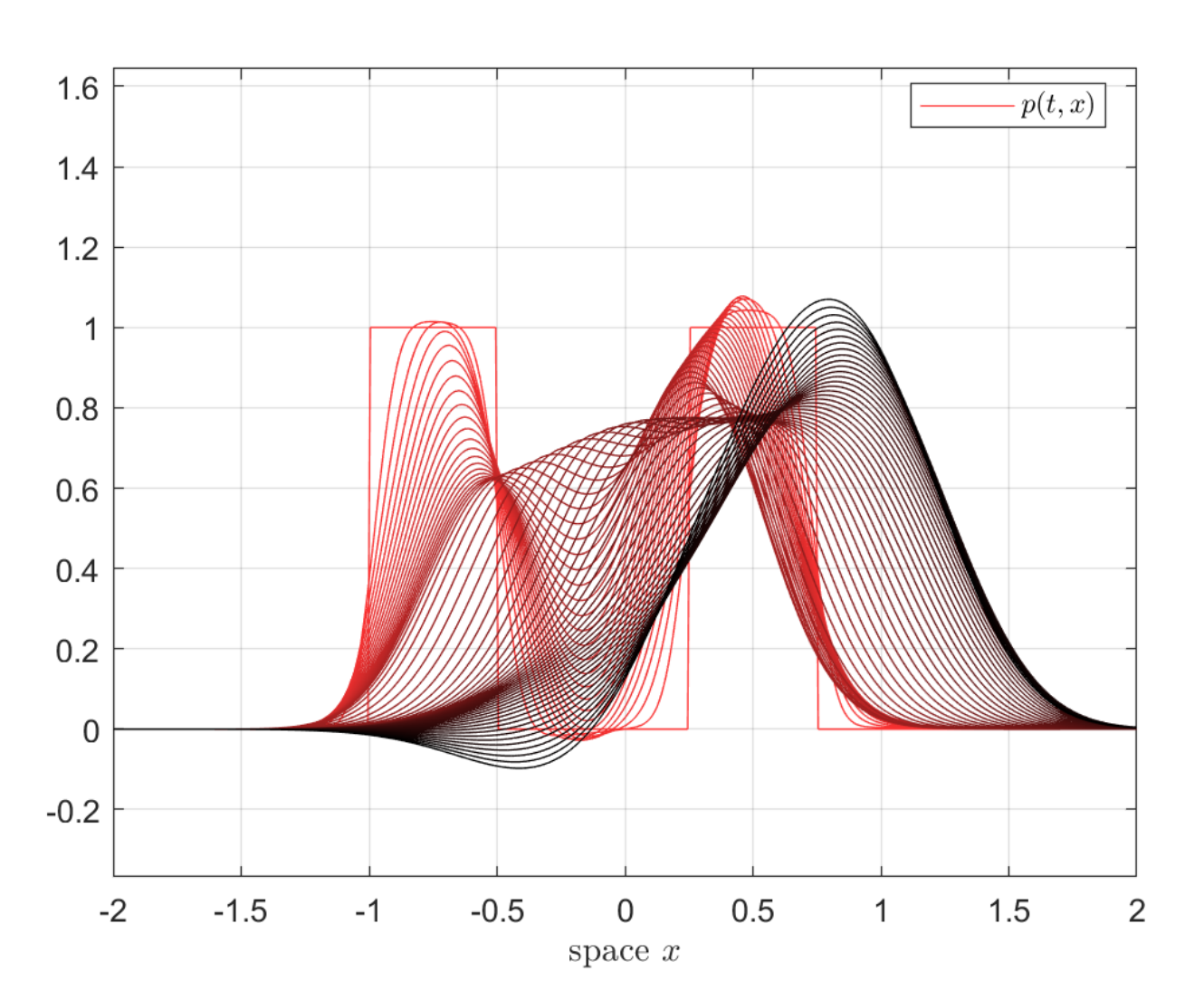}
  \caption{Snapshots of the solution~$p(t_i)$ for $t_i = 0.025\cdot i$, $i=0,\ldots,60$, from red to black.}
  \end{center}
\end{subfigure}\quad
\begin{subfigure}{.48\linewidth}
\begin{center}
  \includegraphics[scale=0.49]{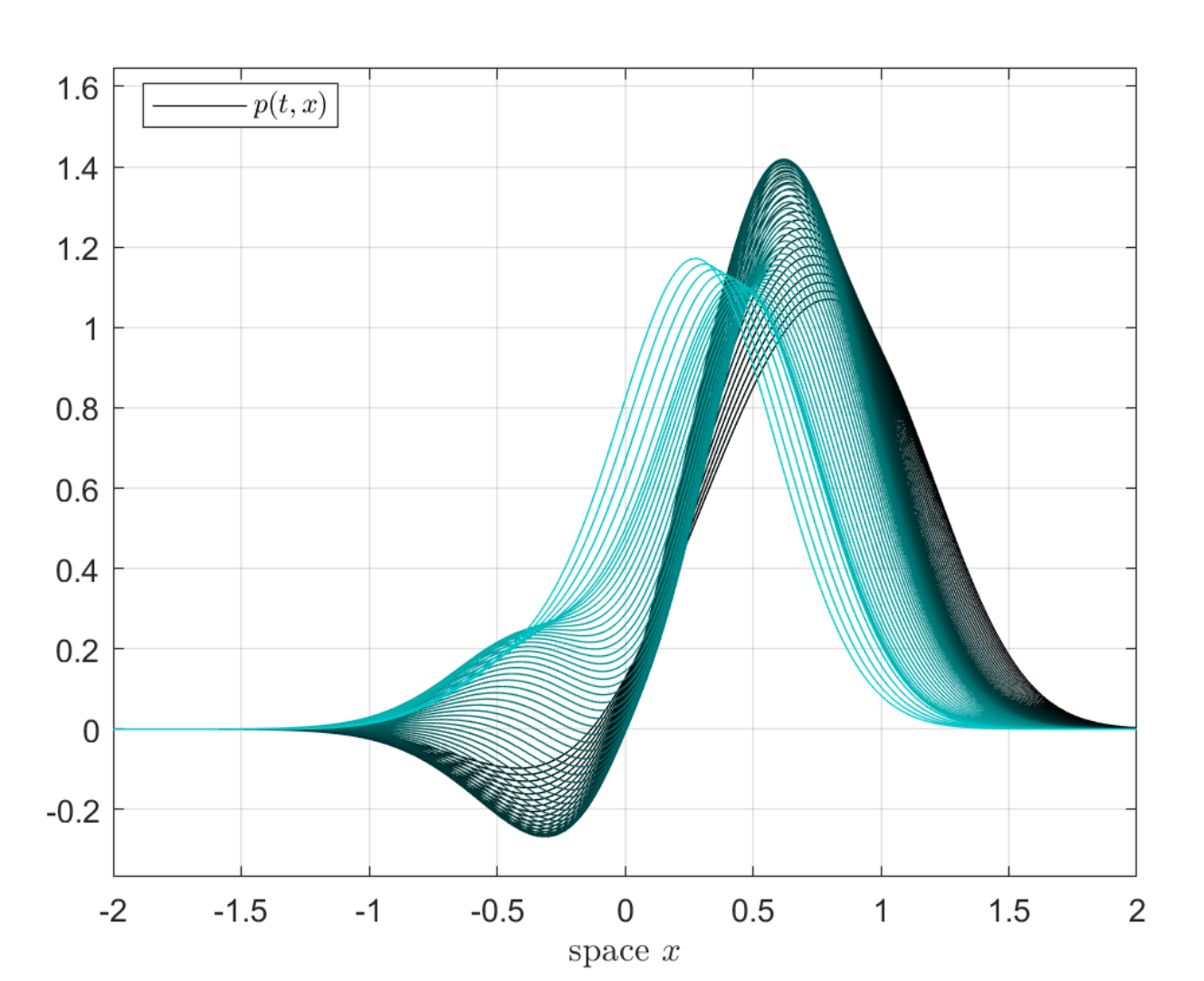}
  \caption{Snapshots of the solution~$p(t_i)$ for $t_i = 1.5 + 0.025\cdot i$, $i=0,\ldots,60$, from black to turquoise.}
  \end{center}
\end{subfigure}
\caption{Simulation of the controller~\eqref{eq:fun-con} applied to~\eqref{eq:FPE-OU-dist} with~\eqref{eq:output} and disturbance~$d$.}
\label{fig:sim-dist}
\vspace{-5mm}
\end{figure}

\begin{figure}[tb!]
\begin{subfigure}{.48\linewidth}
\begin{center}
  \includegraphics[scale=0.49]{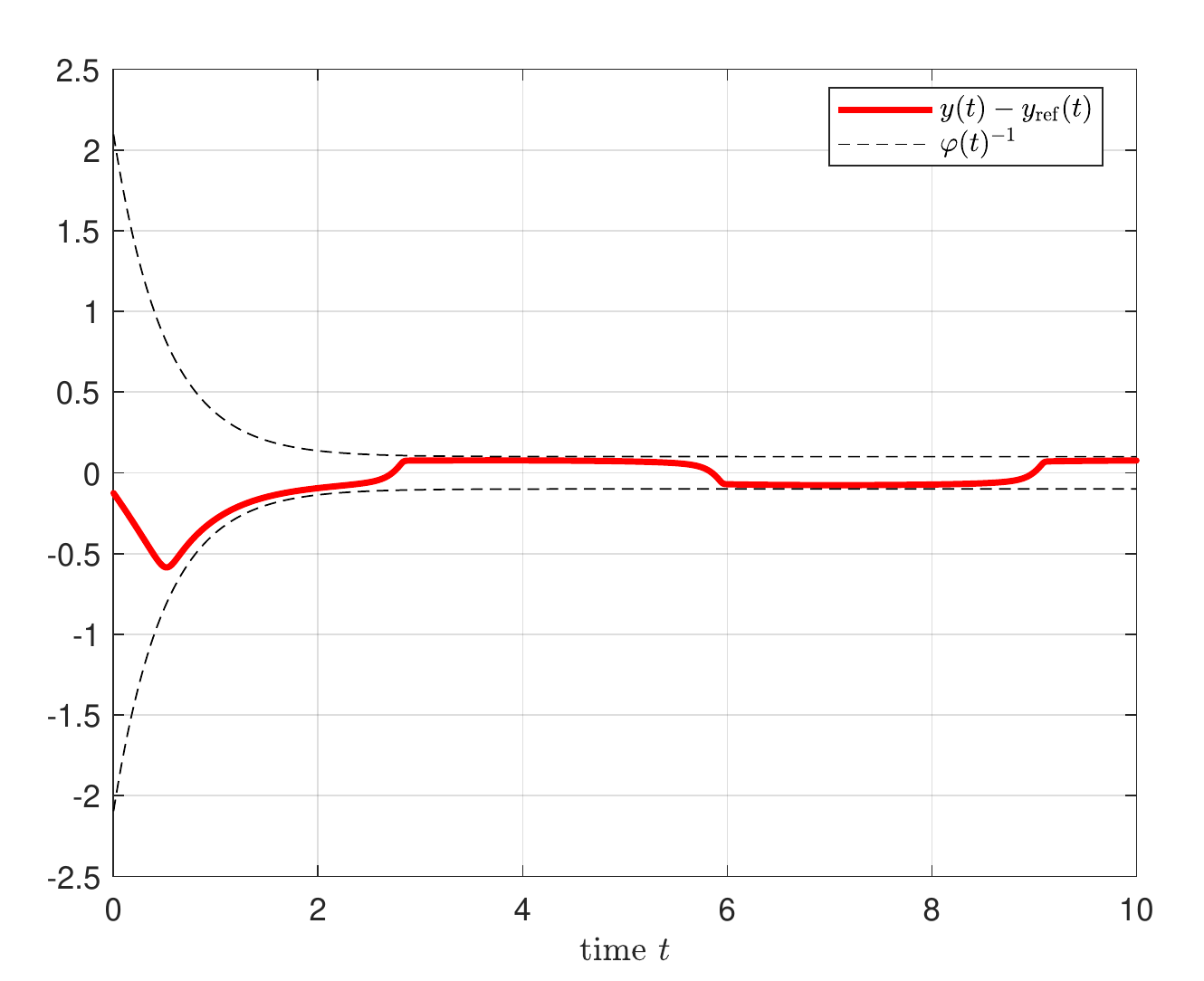}
  \caption{Tracking error and funnel boundary}
  \end{center}
\end{subfigure}\quad
\begin{subfigure}{.48\linewidth}
\begin{center}
  \includegraphics[scale=0.49]{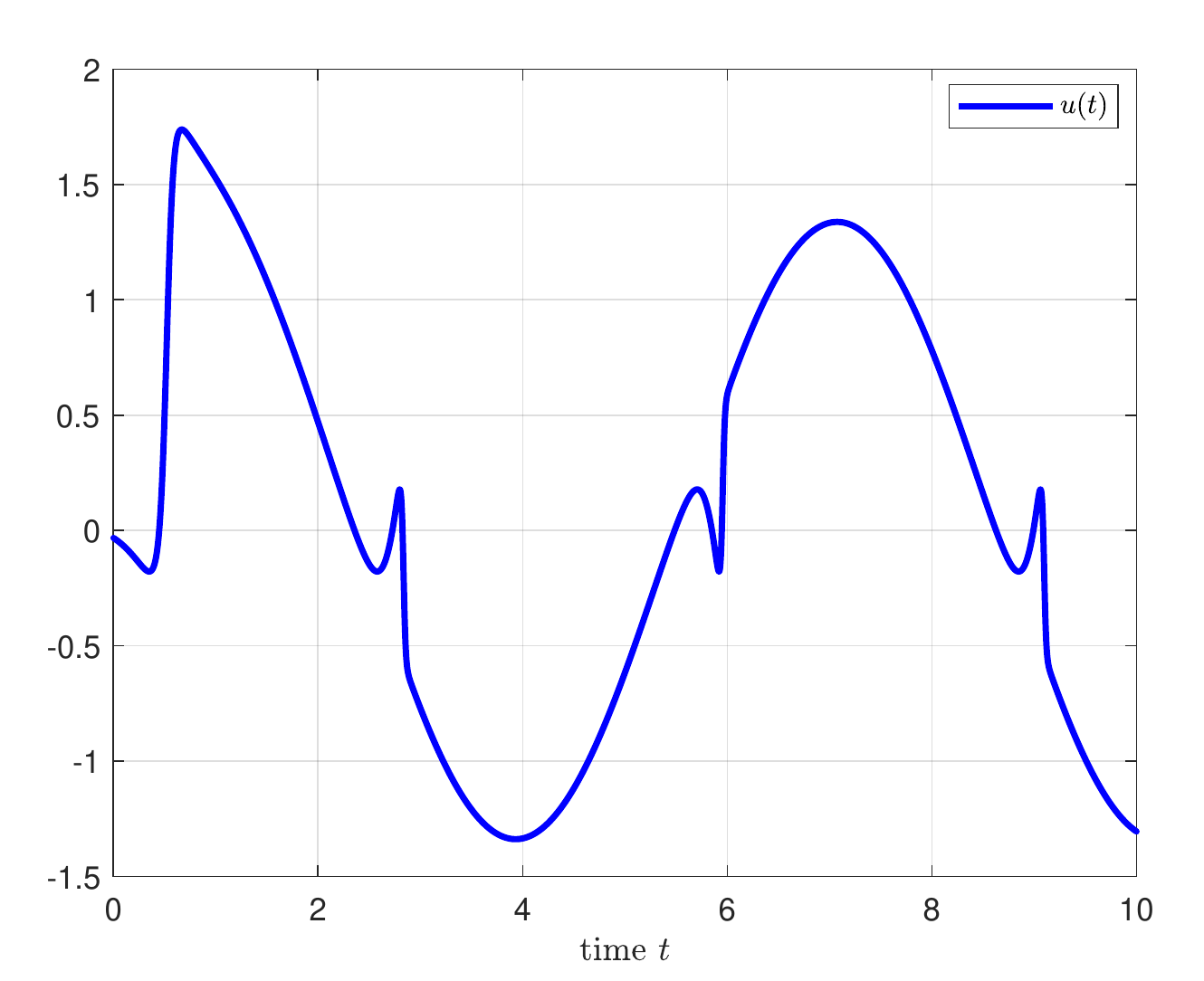}
  \caption{Input function}
  \end{center}
\end{subfigure}
\begin{subfigure}{.48\linewidth}
\begin{center}
  \includegraphics[scale=0.49]{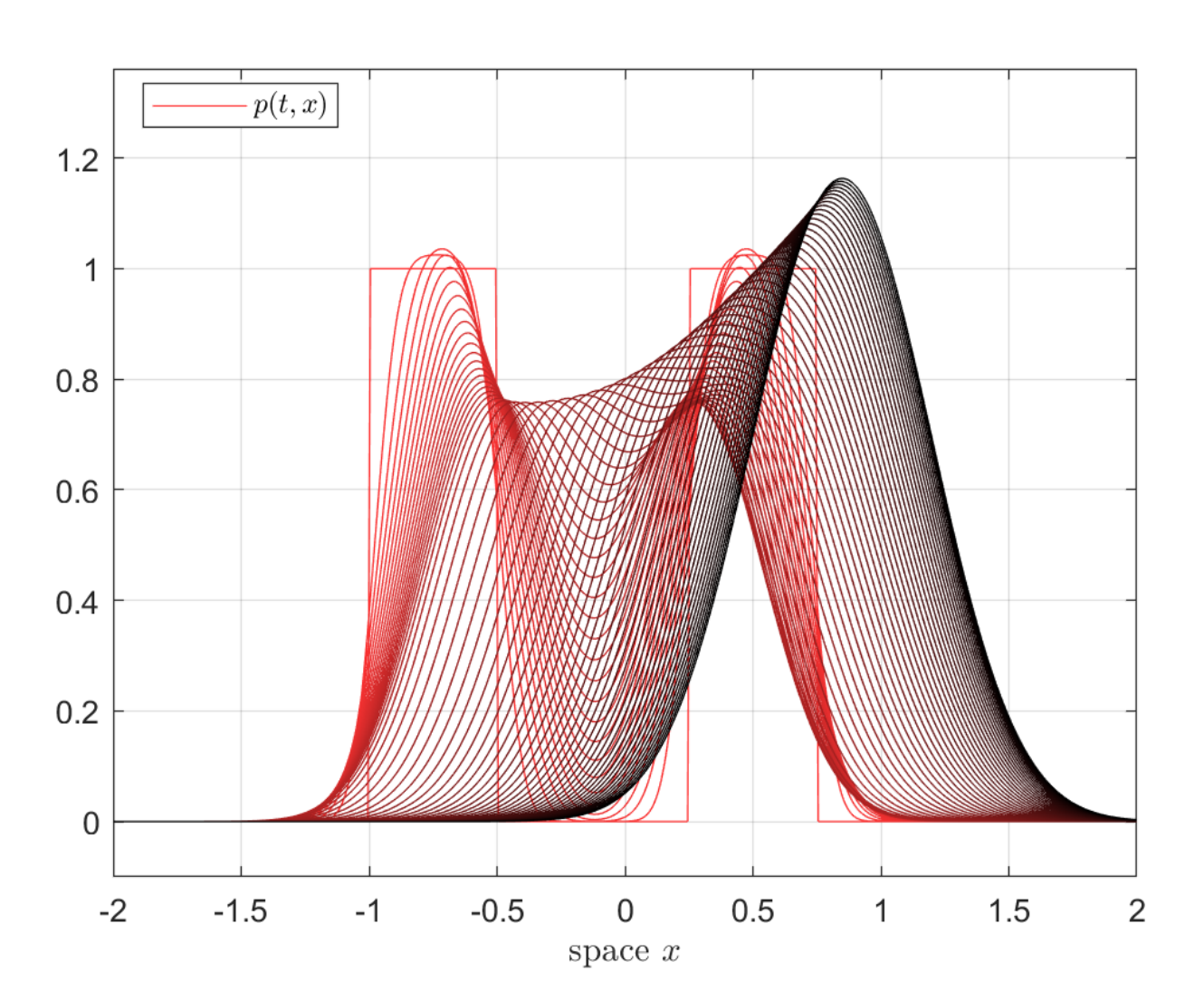}
  \caption{Snapshots of the solution~$p(t_i)$ for $t_i = 0.025\cdot i$, $i=0,\ldots,60$, from red to black.}
  \end{center}
\end{subfigure}\quad
\begin{subfigure}{.48\linewidth}
\begin{center}
  \includegraphics[scale=0.49]{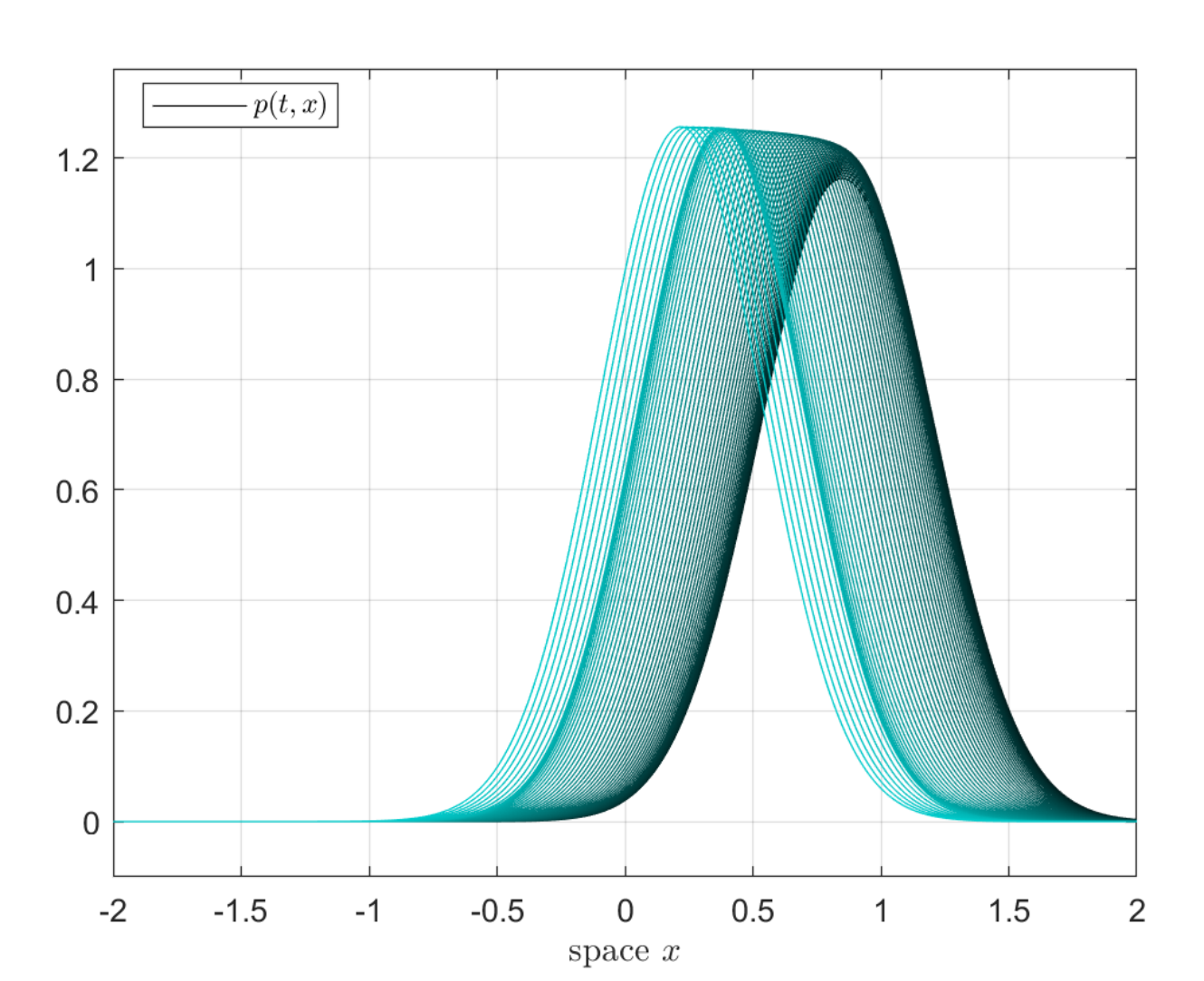}
  \caption{Snapshots of the solution~$p(t_i)$ for $t_i = 1.5 + 0.025\cdot i$, $i=0,\ldots,60$, from black to turquoise.}
  \end{center}
\end{subfigure}
\caption{Simulation of the controller~\eqref{eq:fun-con} applied to~\eqref{eq:FPE-OU-dist} with~\eqref{eq:output}, but without disturbance, i.e., $d=0$.}
\label{fig:sim-undist}
\vspace{-5mm}
\end{figure}

For the simulation the PDE is solved using a finite difference method with a uniform time grid (in $t$) with 10.000 points for the interval $[0,10]$ and a uniform spatial grid (in $x$) with 2.000 points for the interval $[-5,5]$. The simulation has been performed in MATLAB, where in each time step an ODE is solved by using the command \texttt{pdepe} with (artificial) Dirichlet boundary conditions. Relative and absolute tolerance are set to the default values $10^{-3}$ and $10^{-6}$, resp. Fig.~\ref{fig:sim-dist}\,(a) shows the error $e(t) = y(t) - y_{\rm ref}(t)$ between mean value and reference signal and the input values~$u(t)$ generated by the controller are depicted in Fig.~\ref{fig:sim-dist}\,(b). Several snapshots of the solution~$p$, are shown in Fig.~\ref{fig:sim-dist}\,(c) and~(d). It can be seen that, in the presence of disturbances,~$p(t)$ is not a probability density function for $t>0$ in general, since it takes negative values. Nevertheless, the controller guarantees that the error stays within the prescribed funnel boundaries, while the control input shows an acceptable performance.

A simulation of the same configuration, but without disturbance can be seen in Fig.~\ref{fig:sim-undist}. Here, the simulations of the undisturbed equation show that~$p(t)$ is always a probability density and its variance exponentially converges to~$\frac{c}{\Gamma} = 0.2$, as stated in Proposition~\ref{Prop:non-rob-con}. 


\section*{Acknowledgments}

I am indebted to 
Felix L. Schwenninger (U Twente) for several helpful comments and to my PhD student Lukas Lanza (U Paderborn) for helping with the implementation of the simulations.

\bibliographystyle{siam}

\end{document}